\documentclass{amsart}
\usepackage{amsmath,amsthm,amssymb,latexsym,fullpage,setspace,graphicx,float,xcolor,hyperref,verbatim,bm,soul}
\usepackage[foot]{amsaddr}
\usepackage{tikz-cd,tikz,pgfplots}
\usetikzlibrary{decorations.markings,math}
\usepackage[utf8]{inputenc}
\usepackage[english]{babel}

\theoremstyle{plain}
\newtheorem{thm}{Theorem}[section]
\newtheorem{cor}[thm]{Corollary}
\newtheorem{lem}[thm]{Lemma}
\newtheorem{alg}[thm]{Algorithm}

\newtheorem{prop}[thm]{Proposition}

\newtheorem*{remark}{Remark}
\newtheorem{eg}[thm]{Example}

\theoremstyle{definition}
\newtheorem{defn}{Definition}[section]

\makeatletter
\@namedef{subjclassname@2020}{%
  \textup{2020} Mathematics Subject Classification}
\makeatother

\begin{document}
\tikzset{->-/.style={decoration={markings,mark=at position #1 with {\arrow{>}}},postaction={decorate}}}

\title{Constructing lattice surfaces with prescribed Veech groups: an algorithm}
\author{Slade Sanderson}
\address{Department of Mathematics, Utrecht University, P.O.~Box 80010, 3508TA Utrecht, The Netherlands}
\email{s.b.sanderson@uu.nl}
\date{\today}
\subjclass[2020]{37F34 (Primary) 30F60, 32G15 (Secondary)}
\keywords{Veech groups, Fuchsian groups, translation surfaces, Riemann surfaces}

\begin{abstract}
The Veech group of a translation surface is the group of Jacobians of orientation-preserving affine automorphisms of the surface.  We present an algorithm which constructs all translation surfaces with a given lattice Veech group in any given stratum.  In developing this algorithm, we give a new proof of a finiteness result of Smillie and Weiss, namely that there are only finitely many unit-area translation surfaces in any stratum with the same lattice Veech group.

Our methods can be applied to obtain obstructions of lattices being realized as Veech groups in certain strata; in particular, we show that the square torus is the only translation surface in any minimal stratum whose Veech group is all of $\mathrm{SL}_2\mathbb{Z}$.
\end{abstract}

\maketitle

\section{Introduction}\label{Introduction}

The study of translation surfaces may be approached from numerous different angles---complex analysis, differential geometry, algebraic topology and number theory, for instance, each offer unique insights into these objects.  These different perspectives lead to a wealth of questions regarding translation surfaces and their dynamics.  One such avenue of inquiry is the well-known action of $\mathrm{SL}_2\mathbb{R}$ on the stratum $\mathcal{H}_1(d_1,\dots,d_\kappa)$ of unit-area translation surfaces with singularities of orders $d_1,\dots,d_\kappa$.  Stabilizers (i.e. Veech groups) and orbits of translation surfaces under this action have proven to be of great interest.  Analogues of Ratner's theorems concerning unipotent flows on homogeneous spaces (\cite{Ratner}) are proven in the celebrated work of Eskin, Mirzakhani and Mohammadi; in particular, $\mathrm{SL}_2\mathbb{R}$-orbit closures are affine invariant submanifolds of $\mathcal{H}_1(d_1,\dots,d_\kappa)$ (\cite{EMM}).  Despite having been well-studied since as early as the 1980's, some immediate questions regarding Veech groups have been non-trivial to address.  General methods of computing Veech groups were unknown until recently (\cite{Bowman}, \cite{BrJ}, \cite{Edwards}, \cite{ESS}, \cite{Mukamel}, \cite{Veech11}, and the special cases of \cite{Freidinger} and \cite{Schmithusen}), and while some universal properties of Veech groups are known---for instance, they are necessarily discrete and non-cocompact (\mbox{\cite{Veech89}})---a complete classification of which subgroups of $\mathrm{SL}_2\mathbb{R}$ are realized as Veech groups remains an open problem (\cite{HMSZ}).  

The first non-arithmetic examples were discovered by Veech, who proved that the odd index Hecke triangle groups and an index two subgroup of the even index Hecke triangle groups are realized as Veech groups (\mbox{\cite{Veech89}}).  More general triangle groups are studied in \mbox{\cite{BM}} and \mbox{\cite{Ho}}, where the authors show that there are Veech groups commensurable to all triangle groups $\Delta(m,n,\infty),\ 2\le m<n<\infty$.  On the other hand, triangle groups whose orientation preserving subgroups are never contained in a Veech group are also given in \mbox{\cite{Ho}}.  Infinitely generated Veech groups are shown to exist in \mbox{\cite{HS}} and \mbox{\cite{Mc03}}, as are non-trivial Veech groups with no parabolic elements in \mbox{\cite{HL}}.  While this brief survey is far from an exhaustive overview of the literature, it does serve to highlight that the question of realizability of Veech groups has spanned decades and remains a large and interesting problem in the field.

Of particular interest within the space of all translation surfaces are so-called \textit{lattice surfaces}, or \textit{Veech surfaces}, which are those whose Veech groups are lattices (have finite covolume) in $\mathrm{SL}_2\mathbb{R}$.  These surfaces admit especially nice dynamics (\cite{Veech89}), but the list of known families of lattice surfaces is relatively short (\cite{DaPaU}).  Lattice surfaces are also interesting as they are precisely those surfaces whose $\mathrm{SL}_2\mathbb{R}$-orbits are closed with respect to the analytic topology on the stratum (\cite{Veech95}, \cite{Smillie-Weiss04}), and the projection of a closed orbit to the moduli space of Riemann surfaces is an algebraic curve---called a \textit{Teichm\"uller curve}---isometrically immersed with respect to the Teichm\"uller metric (see, say, \cite{Wright}).  

We present an explicit algorithm which constructs all translation surfaces with a given lattice Veech group in any given stratum:
\begin{alg}\label{the_algorithm}
Input: Non-negative integers $d_1\le\dots\le d_\kappa$ whose sum is even and a finite set of generators of a lattice $\Gamma\le\mathrm{SL}_2\mathbb{R}$.

Output:  All translation surfaces $(X,\omega)\in\mathcal{H}_1(d_1,\dots,d_\kappa)$ with Veech group $\mathrm{SL}(X,\omega)=\Gamma$.
\end{alg}

An implicit algorithm is given in \cite{Smillie-Weiss10} which enumerates all affine equivalence classes of lattice surfaces in terms of a parameter measuring the infimum of areas of triangles within the surface.  Algorithm \mbox{\ref{the_algorithm}} similarly returns lattice surfaces, but it fundamentally differs both in its goal of finding surfaces with specific, prescribed Veech groups and in its method of doing so.

In developing Algorithm \ref{the_algorithm}, we obtain a new proof of a finiteness result of Smillie and Weiss (Corollary 1.7, \cite{Smillie-Weiss10_fin}; see also \cite{McMullen}):
\begin{thm}[Smillie-Weiss]\label{finiteness_thm}
There are at most finitely many unit-area lattice surfaces with a given Veech group in any given stratum.
\end{thm}

Our methods essentially reverse the algorithm for computing Veech groups presented in \cite{Edwards} and \cite{ESS}.  There the authors associate to each stratum a canonical (infinite area, disconnected in general) flat surface $\mathcal{O}$, within which any closed, connected translation surface $(X,\omega)$ of the stratum may be naturally represented.  Data regarding the translation surface and its saddle connections are recorded via pairs of points in $\mathcal{O}$ termed \textit{orientation-paired marked segments}, and the authors show that $(X,\omega)$ may be recovered from a certain finite subset of these marked segments---the \textit{marked Voronoi staples} (\S3.4 of \cite{ESS}).  Furthermore, a classification of the Veech group of $(X,\omega)$ in terms of its orientation-paired marked segments and affine automorphisms of $\mathcal{O}$ is given in Proposition 17 of \cite{ESS}.  

Rather than beginning with a lattice surface $(X,\omega)$ and using its corresponding orientation-paired marked segments to compute elements of the Veech group $\Gamma=\mathrm{SL}(X,\omega)$, Algorithm \ref{the_algorithm} begins with a lattice $\Gamma\le\mathrm{SL}_2\mathbb{R}$, simulates normalized subsets of orientation-paired marked segments in $\mathcal{O}$ using $\Gamma$, and constructs candidate translation surfaces with Veech group $\Gamma$ from finite unions of appropriately scaled versions of these `simulations.'  Theorem \ref{finiteness_thm} follows from the facts that 
\begin{enumerate}
\item[(i)] a lattice $\Gamma$ determines only finitely many such simulations (Lemma \mbox{\ref{Sims_G_finite}}),
\item[(ii)] the orientation-paired marked segments of any unit-area translation surface with Veech group $\Gamma$ are realized as a finite union of scaled simulations (Theorem \mbox{\ref{marked_segments_as_scaled_simulations}}), and 
\item[(iii)] for any finite collection of simulations there are only finitely many scalars for which the union of scaled simulations could possibly coincide with the orientation-paired marked segments of a unit-area translation surface.
\end{enumerate}
For this latter point, we introduce the technical but important notions of \textit{permissible triples} and \textit{permissible scalars} of particular subsets of $\mathcal{O}$, prove finiteness results regarding them (Proposition \ref{scalars_unique} and Lemma \ref{permissible_triples_finite}), and apply these results to orientation-paired marked segments and marked Voronoi staples (Proposition \ref{vor_staples_det_perm_triples_prop}).  Our novel proof differs from that of Smillie and Weiss, who use finiteness results of Markov partitions corresponding to hyperbolic affine automorphisms of surfaces (\mbox{\cite{Smillie-Weiss10_fin}}).

This work should be viewed in light of the aforementioned open question regarding the realization of subgroups of $\mathrm{SL}_2\mathbb{R}$ as Veech groups: a full implementation\footnote{A partial implementation has been written by the author using \mbox{\cite{SageMath}} to confirm some low genus examples with small generating sets, though an efficient version of the algorithm which handles more interesting examples awaits implementation.} of Algorithm \ref{the_algorithm} could lead to the discovery of new lattice surfaces and experimental conjectures as to which groups are realized as Veech groups.  Moreover, further work to determine a halting criterion for Algorithm \ref{the_algorithm} would give a general, explicit procedure to determine whether or not a given lattice is a Veech group in any particular stratum (see comments at the end of \S\ref{The algorithm}).  At present, these ideas may be used in special cases to obtain obstructions for lattices being realized as Veech groups.  In particular, we prove:
\begin{thm}\label{square_torus_thm}
The square torus is the only translation surface in $\cup_{g>0}\mathcal{H}(2g-2)$ with Veech group $\mathrm{SL}(X,\omega)=\mathrm{SL}_2\mathbb{Z}$.
\end{thm}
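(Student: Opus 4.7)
The plan is to suppose $(X,\omega)\in\mathcal{H}(2g-2)$ has $\mathrm{SL}(X,\omega)=\mathrm{SL}_2\mathbb{Z}$ and deduce $g=1$. The basic structural input is that $\mathrm{SL}_2\mathbb{Z}\subseteq\mathrm{SL}(X,\omega)$ imposes integer-modulus horizontal and vertical cylinder decompositions via the parabolics $T=\bigl(\begin{smallmatrix}1&1\\0&1\end{smallmatrix}\bigr)$ and $S=\bigl(\begin{smallmatrix}1&0\\1&1\end{smallmatrix}\bigr)$, together with an order-$4$ affine automorphism from $R=\bigl(\begin{smallmatrix}0&-1\\1&0\end{smallmatrix}\bigr)$ which must fix the unique singularity (since the stratum is minimal) and interchange the two decompositions.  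By the Gutkin--Judge criterion, $(X,\omega)$ is arithmetic: after rescaling, a translation cover $\pi\colon(X,\omega)\to(\mathbb{T}^2,dz)$ of some degree $N$ branched only over $[0]\in\mathbb{T}^2$, with a single ramified preimage of index $2g-1$ corresponding to the unique singularity.

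Such a cover is encoded by a transitive homomorphism $\rho\colon F_2=\pi_1(\mathbb{T}^2\setminus\{0\})\to S_N$ whose commutator $\rho([a,b])$ has cycle type $[2g-1,\underbrace{1,\dots,1}_{N-(2g-1)}]$. The hypothesis $\mathrm{SL}(X,\omega)=\mathrm{SL}_2\mathbb{Z}$ then translates into the requirement that every $A\in\mathrm{SL}_2\mathbb{Z}$, acting on $F_2$ via the mapping class group of the once-punctured torus, preserves $(\rho(a),\rho(b))$ up to simultaneous conjugation in $S_N$.  Viewed through the lens of this paper's framework, the marked Voronoi staples of $(X,\omega)$ are, by Theorem~\ref{marked_segments_as_scaled_simulations}, a finite union of scaled $\mathrm{SL}_2\mathbb{Z}$-simulations in the canonical surface $\mathcal{O}$ of $\mathcal{H}(2g-2)$, and the equivariance above corresponds precisely to the constraint that these simulations patch coherently across the local sheets of $\mathcal{O}$ above the unique singularity.

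The main obstacle is to show that no nontrivial $\rho$ with the above cycle structure is $\mathrm{SL}_2\mathbb{Z}$-equivariant, forcing $N=1$ and hence $g=1$. I would argue in two steps. First, looking at the abelianization: the image of $\ker\rho$ in $F_2^{\mathrm{ab}}=\mathbb{Z}^2$ is strictly $\mathrm{SL}_2\mathbb{Z}$-invariant, so of the form $n\mathbb{Z}^2$ with $n^2\mid N$, a severe restriction (in particular $n=1$ whenever $N$ is prime). Second, a direct combinatorial argument on the $T$-action: $T$-invariance of $(\rho(a),\rho(b))$ up to conjugation in $S_N$ amounts to $\rho(b)\rho(a)$ being simultaneously conjugate to $\rho(b)$ by a permutation centralizing $\rho(a)$; a case analysis on the possible cycle types of $\rho(a)$ and $\rho(b)$ (constrained by the prescribed cycle type of $\rho([a,b])$ and by transitivity) should show that this is impossible unless $N=1$. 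This second step is the technical heart of the argument; once it is established, the cover $\pi$ has degree one, $(X,\omega)\cong(\mathbb{T}^2,dz)$, and the theorem follows.
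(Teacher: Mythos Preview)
Your approach is genuinely different from the paper's. The paper never passes to the origami/monodromy description; instead it works entirely inside its canonical surface $\mathcal{O}=\mathcal{O}(2g-2)$. It first shows, by an explicit computation with words in the lifts $g_S,g_T\in\mathrm{Aff}^+_\mathcal{O}(X,\omega)$ of $S$ and $T$, that every unit-length horizontal/vertical point of $\mathcal{O}$ lies in a single $\langle g_S,g_T\rangle$-orbit. A short geometric lemma (Lemma~\ref{P=S_F(X,omega)}) then forces this set to be exactly $\widehat{\mathcal{S}}_F(X,\omega)$, so $|\widehat{\mathcal{S}}(X,\omega)|=2(2g-1)$. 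A separate Euler-characteristic count on the convex body (Lemma~\ref{num_vor_staples}) gives $|\widehat{\mathcal{S}}(X,\omega)|=2g$ in the even case, and $2(2g-1)=2g$ forces $g=1$. This is a clean numerical obstruction that falls directly out of the paper's machinery.

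Your outline, by contrast, has a genuine gap. The reduction via Gutkin--Judge to an origami and the translation of $\mathrm{SL}(X,\omega)=\mathrm{SL}_2\mathbb{Z}$ into $\mathrm{Aut}(F_2)$-equivariance of the monodromy pair $(\rho(a),\rho(b))$ up to conjugacy are both fine. But your ``second step'' is the entire content of the argument, and you have not done it: you write that a case analysis on cycle types ``should show'' that $T$-invariance is impossible unless $N=1$. This is exactly the nontrivial combinatorics, and it is not short. Concretely, $T$-invariance says there exists $\sigma$ centralizing $\rho(a)$ with $\sigma\rho(b)\sigma^{-1}=\rho(b)\rho(a)$; nothing about the commutator cycle type $[2g-1,1,\dots,1]$ immediately obstructs this, and transitivity alone does not pin down the cycle structures of $\rho(a)$ and $\rho(b)$ enough to make a case analysis tractable without further ideas. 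Your abelianization step yields only $n^2\mid N$, which does not feed into the second step and does not by itself rule out anything (e.g.\ $n=1$ is always allowed). So as written this is a plausible plan, not a proof; to make it one you would need either a substantive combinatorial argument in $S_N$ or an invariant (beyond the commutator's cycle type) that distinguishes the orbit of $(\rho(a),\rho(b))$ under the mapping class group action.
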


The paper is organized as follows.  Section \ref{Background} gives a brief background on translation surfaces (\S\ref{Translation surfaces}) and their Veech groups (\S\ref{Veech groups}).  Section \ref{A canonical surface for each stratum} concerns the canonical surface $\mathcal{O}$ studied in \cite{Edwards} and \cite{ESS}: definitions and immediate results regarding $\mathcal{O}$ are given in \S\ref{Canonical surface}, permissible triples and scalars are introduced and studied in \S\ref{Permissible triples}, and in \S\ref{Marked segments and Voronoi staples} we recall definitions and results regarding marked segments and Voronoi staples of a translation surface.  Section \ref{Fanning groups, simulations and finiteness of lattices in strata} further develops the machinery for Algorithm \ref{the_algorithm} and uses these ideas to give a new proof of Theorem \ref{finiteness_thm}: a characterization of lattice groups in terms of directions announced by linear transformations is given in \S\ref{Fanning groups and lattices}; subsections \ref{The group Aff^+_O(X,omega) and its action on marked segments} and \ref{Simulating orbits} explore a group action on marked segments and how a lattice group may be used to simulate orbits of this action; subsection \ref{Marked Voronoi staples determine permissible triples} relates marked Voronoi staples to permissible triples; and a new proof of Theorem \ref{finiteness_thm} is given in \S\ref{Finiteness of lattice surfaces with given Veech groups}.  Algorithm \ref{the_algorithm} is presented in \S\ref{The algorithm} and Theorem \ref{square_torus_thm} is proven in \S\ref{The modular group in minimal strata}.

\section{Background}\label{Background}

\subsection{Translation surfaces}\label{Translation surfaces}

We begin with basic definitions and notation; see, say, surveys of \cite{Wright} and \cite{Zorich} for more details.

\subsubsection{Translation surfaces: various perspectives}

A \textit{translation surface} $(X,\omega)$ is a Riemann surface $X$ together with a non-zero, holomorphic one-form $\omega$.  Equivalently, a translation surface is defined as a real surface $X$ such that off of a finite set $\Sigma$, the surface $X\backslash \Sigma$ is equipped with a translation atlas, and the resulting flat structure extends to all of $X$ to give conical singularities at points of $\Sigma$ whose angles are positive integral multiples of $2\pi$.  Any translation surface may be \textit{polygonally represented} as a collection of polygons in the plane together with an identification of edges in pairs, such that identified edges are parallel, equal length and of opposite orientation.  A translation surface is \textit{closed} if it is compact and has no boundary.

In these three perspectives---which we move fluidly between---the zeros of $\omega$, elements of $\Sigma$ of cone angles $2\pi (d+1),\ d>0,$ and, after identifying edges, the vertices of angles $2\pi (d+1),\ d>0,$ of a polygonally represented translation surface each correspond to the same set of  \textit{non-removable singularities} of the surface.  Throughout, the \textit{singular set} $\Sigma$ denotes these singularities together with possibly finitely many \textit{removable singularities} (or \textit{marked points}), i.e. points of cone angle $2\pi$.  We refer to any element of $\Sigma$ as a singularity (of cone angle $2\pi(d+1),\ d\ge 0$), bearing in mind that it may in fact be removable.  Points in the complement $X\backslash\Sigma$ are called \textit{regular}.

\subsubsection{Metric, Lebesgue measure, saddle connections and holonomy vectors}\label{Metric, Lebesgue measure, saddle connections and holonomy vectors}

A translation surface $(X,\omega)$ comes equipped with a metric defined as follows: given $p,q\in X$, the distance from $p$ to $q$ is given by
\[d(p,q):=\inf_{\gamma}\left|\int_\gamma \omega\right|,\]
where the infimum is over all piecewise-smooth curves $\gamma\subset X$ from $p$ to $q$.  The infimum is realized by a straight line segment (with respect to the flat structure) on $(X,\omega)$ or by a union of such segments meeting at singularities.  Lebesgue measure in the plane pulls back via coordinate maps of the translation atlas of $(X,\omega)$ to give a measure on $X\backslash \Sigma$; we extend this measure to all of $X$ by declaring $\Sigma$ to be a null set.

A \textit{separatrix} is a straight line segment on $(X,\omega)$ emanating from a singularity and having no singularities in its interior.  A \textit{saddle connection} is a separatrix (of positive length) which also ends at a singularity.  The \textit{holonomy vector} of a saddle connection $s$ is defined as $\text{hol}(s):=\int_s\omega$.  The collection of all holonomy vectors of $(X,\omega)$ forms a subset of the plane with no limit points and whose directions are dense in $S^1$ (Proposition 3.1, \cite{Vorobets}).

\subsubsection{Voronoi decomposition}

Every translation surface $(X,\omega)$ has a (unique) \textit{Voronoi decomposition} subordinate to its singular set $\Sigma$, which is described as follows (see also \cite{Bowman}, \cite{Masur-Smillie}).  For any $x\in X$, let $d(x,\Sigma)$ denote the minimum of the set $\{d(x,\sigma)\}_{\sigma\in\Sigma}$.  To each $\sigma\in\Sigma$ there is associated an open, connected 2-cell, $\mathcal{C}_\sigma$, comprised of the points $x\in X$ for which $d(x,\Sigma)$ is realized by a unique length-minimizing path ending at $\sigma$.  The boundary of $\mathcal{C}_\sigma$ is the union of 1-cells of points $x$ for which $d(x,\Sigma)$ is realized by precisely two length-minimizing paths, at least one of which ends at $\sigma$, and of 0-cells of points $x$ for which $d(x,\Sigma)$ is realized by three or more length-minimizing paths, at least one of which ends at $\sigma$.

\subsection{Veech groups}\label{Veech groups}

\subsubsection{Affine diffeomorphisms and Veech groups}\label{affine_diffeomorphisms...}

An \textit{affine diffeomorphism} $f:(X_1,\omega_1)\to(X_2,\omega_2)$ between (not necessarily closed nor connected) translation surfaces is a diffeomorphism from $X_1$ to $X_2$ sending the singular set $\Sigma_1$ of $X_1$ into that of $X_2$, and which---on the complement of $\Sigma_1$---is locally an affine map of the plane of constant linear part\footnote{The translation structures guarantee that the linear part of a locally affine map is constant on components of $(X_1,\omega_1)$; this latter statement in the definition is thus intended for disconnected translation surfaces.} (i.e. a map of the form $v\mapsto Av+b$ for some global $A\in\mathrm{GL}_2\mathbb{R}$ and local $b\in\mathbb{R}^2$).  An \textit{affine automorphism} is an affine diffeomorphism from a translation surface $(X,\omega)$ to itself.  The set of all affine automorphisms of $(X,\omega)$, denoted $\text{Aff}(X,\omega)$, forms a group under composition, as does the subset $\text{Aff}^+(X,\omega)\subset\text{Aff}(X,\omega)$ of orientation-preserving elements.  The map $\text{der}:\text{Aff}(X,\omega)\to\mathrm{GL}_2\mathbb{R}$ sending an orientation-preserving affine automorphism to its linear part in local coordinates gives a group homomorphism.  The image of $\text{Aff}^+(X,\omega)$ under this map, denoted $\mathrm{SL}(X,\omega)$, is called the \textit{Veech group} of $(X,\omega)$, and its kernel is the group of translations denoted by $\text{Trans}(X,\omega)$.  Veech shows in \mbox{\cite{Veech89}} that for closed, connected $(X,\omega)$, the Veech group $\mathrm{SL}(X,\omega)$ is a discrete subgroup of $\mathrm{SL}_2\mathbb{R}$---and thus its image in $\text{PSL}_2\mathbb{R}$ is a Fuchsian group---and, moreover, $\mathrm{SL}(X,\omega)$ is always non-cocompact.

\subsubsection{Strata, a $\mathrm{GL}_2\mathbb{R}$-action and lattices}

We call $(X_1,\omega_1)$ and $(X_2,\omega_2)$ \textit{translation equivalent} if there is an affine diffeomorphism between them with trivial linear part.  For non-negative integers $d_1\le\dots\le d_\kappa$, let $\mathcal{H}(d_1,\dots,d_\kappa)$ denote the \textit{stratum} of all closed, connected translation surfaces---up to translation equivalence---with singularities of cone angles $2\pi(d_1+1)\le\dots\le 2\pi(d_\kappa+1)$ and no other singularities. (Recall that we allow for `removable singularities,' so some $d_i$ may be zero, and with our definitions, $\mathrm{SL}(X,\omega)$ depends on these marked points in $\Sigma$.  In particular, Algorithm \mbox{\ref{the_algorithm}} applies not only to standard strata where each $d_i>0$, but it also allows for the construction of lattice surfaces $(X,\omega)$ with a specified number of marked points which remain invariant under $\text{Aff}^+(X,\omega)$.)  Abusing notation, we denote an element of a stratum by any of its translation-equivalent representatives.  By the Riemann-Roch Theorem, every translation surface $(X,\omega)\in\mathcal{H}(d_1,\dots,d_\kappa)$ has the same genus $g>0$, and the sum of the $d_i$ equals $2g-2$.

There is a natural action of $\mathrm{GL}_2\mathbb{R}$ on each stratum, given by post-composing the coordinate charts of $(X,\omega)\in\mathcal{H}(d_1,\dots,d_\kappa)$ with the usual action of a matrix in the plane via a linear transformation (one can verify that this action is well-defined with respect to translation-equivalence).  Any translation surface may be normalized by the action of a diagonal matrix so as to have unit-area; we denote the collection of unit-area translation surfaces in $\mathcal{H}(d_1,\dots,d_\kappa)$ by $\mathcal{H}_1(d_1,\dots,d_\kappa)$.  There is a corresponding action of $\mathrm{SL}_2\mathbb{R}$ on $\mathcal{H}_1(d_1,\dots,d_\kappa)$, and the stabilizer of $(X,\omega)$ under this $\mathrm{SL}_2\mathbb{R}$-action is isomorphic to the Veech group $\mathrm{SL}(X,\omega)$ as defined in \S\ref{affine_diffeomorphisms...}.  We call $(X,\omega)$ a \textit{lattice surface} if $\mathrm{SL}(X,\omega)$ is a lattice group, i.e. $\mathrm{SL}(X,\omega)$ has finite covolume in $\mathrm{SL}_2\mathbb{R}$.  

\begin{remark}
Recall from \S\mbox{\ref{affine_diffeomorphisms...}} that a Veech group is always non-cocompact, and thus a lattice Veech group is non-uniform (i.e. it is a non-cocompact discrete group of finite covolume).  Other necessary properties of lattice Veech groups are known; for example, the trace field of $\mathrm{SL}(X,\omega)$ is a totally real number field of degree at most $g$, where $g$ is the genus of $(X,\omega)$ (\mbox{\cite{GJ}},\mbox{\cite{HL}},\mbox{\cite{KS}}).  Throughout, when speaking of a lattice $\Gamma\le\mathrm{SL}_2\mathbb{R}$ the reader may wish to impose such known restrictions, though they are not strictly necessary for our statements.  By a lattice we simply mean a discrete subgroup of $\mathrm{SL}_2\mathbb{R}$ of finite covolume.
\end{remark}

\section{A canonical surface for each stratum}\label{A canonical surface for each stratum}

\subsection{Canonical surface}\label{Canonical surface}

Here we recall the canonical (infinite area and disconnected in general) flat surface associated to each stratum $\mathcal{H}(d_1,\dots,d_\kappa)$ studied in \cite{Edwards} and \cite{ESS} and explore some immediate results.  For each $1\le i\le\kappa$, let $\mathcal{O}_i:=(\mathbb{C},z^{d_i}dz)$, and set 
\[\mathcal{O}=\mathcal{O}(d_1,\dots,d_\kappa):=\bigsqcup_{i=1}^\kappa \mathcal{O}_i.\]
Each component $\mathcal{O}_i\subset\mathcal{O}$ is an infinite translation surface with a sole singularity of cone angle $2\pi(d_i+1)$ at the origin, which we denote by $0\in\mathcal{O}_i$ (the context will be clear as to which component an origin $0$ belongs).  
Intuitively, we may think of $\mathcal{O}_i$ as $d_i+1$ copies of the plane, denoted $c_0^i,\dots,c_{d_i}^i$, each of which is slit along the non-negative real axis and glued so that the bottom edge, $b_j^i$, of the slit of $c_j^i$ is identified with the top edge, $t_{j+1}^i$, of the slit of $c_{j+1}^i$ for each $j\in\mathbb{Z}_{d_i+1}$; see Figure \ref{O_i_figure}.  After making these identifications, we (arbitrarily) consider $t_{j+1}^i\sim b_j^i$ as part of $c_{j+1}^i$ and not of $c_{j}^i$.  In particular, $c_j^i\cap c_k^i=\{0\}$ for distinct $j,k\in\mathbb{Z}_{d_i+1}$.

\begin{figure}[t]
\centering
\begin{tikzpicture}
\draw [->,dashed,line width=1] (0,0) -- (1.5,0) node[midway,above] {$t_0^i$} node[midway,below] {$b_0^i$};
\draw [<-,line width=1] (-1.5,0) -- (0,0);
\draw [<->,line width=1] (0,-1.5) -- (0,1.5);
\draw (1.4,1.4) circle (0pt) node {$c_0^i$};
\draw [->,dashed,line width=1] (4.5,0) -- (6,0) node[midway,above] {$t_1^i$} node[midway,below] {$b_1^i$};
\draw [<-,line width=1] (3,0) -- (4.5,0);
\draw [<->,line width=1] (4.5,-1.5) -- (4.5,1.5);
\draw (5.9,1.4) circle (0pt) node {$c_1^i$};
\draw (7,0) circle (0.025) [fill=black];
\draw (7.25,0) circle (0.025) [fill=black];
\draw (7.5,0) circle (0.025) [fill=black];
\draw [->,dashed,line width=1] (10,0) -- (11.5,0) node[midway,above] {$t_{d_i}^i$} node[midway,below] {$b_{d_i}^i$};
\draw [<-,line width=1] (8.5,0) -- (10,0);
\draw [<->,line width=1] (10,-1.5) -- (10,1.5);
\draw (11.4,1.4) circle (0pt) node {$c_{d_i}^i$};
\end{tikzpicture}
\caption{The surface $\mathcal{O}_i$, where the bottom of the slit, $b_{j}^i$, is identified with the top of the slit, $t_{j+1}^i$, for each $j\in\mathbb{Z}_{d_i+1}$.}
\label{O_i_figure}
\end{figure}
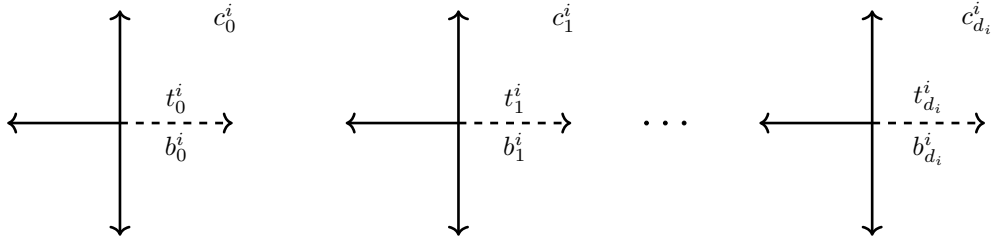

Define ${\bm\pi}_i:\mathcal{O}_i\to\mathbb{C}$ by ${\bm\pi}_i(p):=\int_\gamma z^{d_i}dz$, where $\gamma$ is any piecewise-smooth curve from $0$ to $p$ (note that the integral is independent of path due to Cauchy's integral theorem), and extend to a map ${\bm\pi}:\mathcal{O}\to\mathbb{C}$ by setting ${\bm\pi}|_{\mathcal{O}_i}:={\bm\pi}_i$ for each $i\in\{1,\dots,\kappa\}$.  Observe that ${\bm\pi}_i(\cdot)$ is a degree $d_i+1$ branched covering with a single branch point $0\in\mathbb{C}$, and the extended map ${\bm\pi}(\cdot)$ is a degree $\sum_{i=1}^\kappa (d_i+1)=2g-2+\kappa$ branched covering with sole branch point $0
\in\mathbb{C}$.  Throughout, we denote by $z_x$ and $z_y$ the real and imaginary parts, respectively of $z\in\mathbb{C}$; hence ${\bm\pi}(p)={\bm\pi}(p)_x+i{\bm\pi}(p)_y$.
Note that on sufficiently small neighborhoods in $\mathcal{O}$ (and away from singularities), the map ${\bm\pi}(\cdot)$ restricts to give a coordinate chart for the translation atlas associated to $\mathcal{O}$.  Moreover, each $\mathcal{O}_i$ is naturally equipped with generalized polar coordinates: every point $p\in c_j^i\subset \mathcal{O}_i$ is determined by a magnitude $|p|:=d(0,p)=|{\bm\pi}(p)|$ and, for $p\neq 0$, a principal argument value $\text{arg}(p):=\text{arg}({\bm\pi}(p))+2\pi j\in[2\pi j, 2\pi(j+1))$, where $|z|$ and $\text{arg}(z)\in [0,2\pi)$ are the Euclidean norm and principal value of the argument function, respectively, of $z\in\mathbb{C}$.

For each $i\in\{1,\dots,\kappa\}$, define an affine automorphism $\rho_i:\mathcal{O}_i\to\mathcal{O}_i$ which acts as a counterclockwise rotation of angle $2\pi$ about the origin; note that $\rho_i$ belongs to $\text{Trans}(\mathcal{O}_i)$ since, in local coordinates given by ${\bm\pi}(\cdot)$, $\rho_i$ has trivial linear part.  We extend $\rho_i$ to $\mathcal{O}$ by acting as the identity on all other components and observe that $\rho_i\in\text{Trans}(\mathcal{O})$.  For each integer $d\ge 0$, let $n(d)=\#\{1\le i\le\kappa\ |\ d_i=d\}$.  Having fixed $d$, let $i_1,\dots,i_{n(d)}$ be the distinct indices of the $d_1,\dots,d_\kappa$ which equal $d$, and let $S_{n(d)}$ denote the symmetric group on the set $\{i_1,\dots,i_{n(d)}\}$.  Extend $S_{n(d)}$ to act as the identity on all other indices, and for each $\alpha\in S_{n(d)}$, let $f_\alpha:\mathcal{O}\to\mathcal{O}$ permute the indices of $\mathcal{O}_1,\dots,\mathcal{O}_\kappa$ by $\alpha$, respecting polar coordinates.  We again have that $f_\alpha\in\text{Trans}(\mathcal{O})$ for each $\alpha\in S_{n(d)}$ with $n(d)\ge 1$.  

\begin{lem}\label{Trans(O)}
The group $\text{Trans}(\mathcal{O})$ is in bijective correspondence with
\[\left(\prod_{\substack{d\ge 0,\\ n(d)\ge 1}}S_{n(d)}\right)\times\left(\prod_{i=1}^\kappa C_{d_i+1}\right),\]
where $C_{d_i+1}$ is the cyclic group of order $d_i+1$.
\end{lem}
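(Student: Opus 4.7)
The plan is to exhibit the bijection explicitly. Given $((\alpha_d)_d,(k_i)_i)$ in the product group, let $\alpha$ denote the single permutation of $\{1,\ldots,\kappa\}$ obtained by combining the $\alpha_d\in S_{n(d)}$ (each of which permutes a different index set $\{i:d_i=d\}$), and define
\[
\Phi\bigl((\alpha_d)_d,(k_i)_i\bigr):=f_\alpha\circ\rho_1^{k_1}\circ\cdots\circ\rho_\kappa^{k_\kappa}.
\]
Well-definedness---that $\Phi$ lands in $\text{Trans}(\mathcal{O})$---is immediate from the preceding remarks that each $f_\alpha$ and each $\rho_i$ lies in $\text{Trans}(\mathcal{O})$. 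For injectivity, suppose two inputs yield the same map: comparing the induced permutations of the connected components $\{\mathcal{O}_1,\ldots,\mathcal{O}_\kappa\}$ forces the $\alpha_d$'s to agree, and then comparing the restriction to each fixed $\mathcal{O}_i$, where $\rho_i$ has order exactly $d_i+1$, forces $k_i\equiv k_i'\pmod{d_i+1}$.

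The bulk of the argument is surjectivity. Take $f\in\text{Trans}(\mathcal{O})$. First, $f$ permutes the connected components $\mathcal{O}_1,\ldots,\mathcal{O}_\kappa$ of $\mathcal{O}$, and since $f$ is an isometry of the flat metric it preserves cone angles at singularities; hence $f(\mathcal{O}_i)=\mathcal{O}_{\alpha(i)}$ for some permutation $\alpha$ with $d_{\alpha(i)}=d_i$, producing the permutation data $(\alpha_d)$. Replacing $f$ by $f_\alpha^{-1}\circ f$ (still in $\text{Trans}(\mathcal{O})$), I may assume $f$ preserves every component setwise; since each $\mathcal{O}_i$ has $\{0\}$ as its singular set, $f(0)=0$ on each component.

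The key remaining step is to show that $f|_{\mathcal{O}_i}$ is a power of $\rho_i$. Since $f$ has trivial linear part, in the coordinate charts supplied by $\text{proj}$ the restriction $f|_{\mathcal{O}_i}$ is a pure translation near every regular point, so the function
\[
g_i:\mathcal{O}_i\to\mathbb{C},\qquad p\mapsto \text{proj}(f(p))-\text{proj}(p),
\]
is locally constant on $\mathcal{O}_i\setminus\{0\}$, continuous on all of $\mathcal{O}_i$, and vanishes at $p=0$. Connectedness of $\mathcal{O}_i\cong\mathbb{C}$ then forces $g_i\equiv 0$, i.e.\ $\text{proj}\circ f|_{\mathcal{O}_i}=\text{proj}$. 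Hence $f|_{\mathcal{O}_i}$ is a deck transformation of the degree-$(d_i+1)$ branched cover $\text{proj}:\mathcal{O}_i\to\mathbb{C}$, whose deck group is cyclic of order $d_i+1$. Since $\rho_i|_{\mathcal{O}_i}$ lies in that deck group by the same argument applied to $\rho_i$, it generates it, so $f|_{\mathcal{O}_i}=\rho_i^{k_i}$ for a unique $k_i\in\mathbb{Z}/(d_i+1)$. Assembling these data gives $f=\Phi\bigl((\alpha_d),(k_i)\bigr)$, completing surjectivity.

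The main obstacle will be the step passing from the local ``trivial linear part'' hypothesis to the global identity $\text{proj}\circ f|_{\mathcal{O}_i}=\text{proj}$: it is a continuity-plus-connectedness argument, but is the point at which the multi-sheeted nature of $\mathcal{O}_i$ (and the fact that $\text{proj}$, though locally a chart, is globally many-to-one) must be handled carefully. Once this step is in hand, the rest---identifying $\alpha$, reducing to the single-component case, and converting a deck transformation into a power of $\rho_i$---is essentially bookkeeping.
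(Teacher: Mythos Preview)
Your proof is correct and in fact more self-contained than the paper's, which is only a sketch. The paper defers the generation statement to Lemma~2 of \cite{ESS} and then argues informally that any word in the $f_\alpha$'s and $\rho_i$'s can be rearranged into a normal form, with uniqueness checked by tracking the images of the $2\pi$-sectors $c_j^i$. You instead build the bijection $\Phi$ explicitly and prove surjectivity by showing directly that any component-preserving $f\in\text{Trans}(\mathcal{O}_i)$ is a deck transformation of the branched cover $\text{proj}:\mathcal{O}_i\to\mathbb{C}$.

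Your key step---the identity $\text{proj}\circ f|_{\mathcal{O}_i}=\text{proj}$ obtained from local constancy of $g_i$ plus continuity at the cone point---is essentially the special case $A=\text{Id}$ of the paper's next lemma (Lemma~\ref{proj*f=A*proj}), proved there by a closely related argument. So you have effectively inlined that result, which makes your proof independent of the external reference. One small imprecision: $g_i$ is locally constant only on $\mathcal{O}_i\setminus\{0\}$, so the connectedness you invoke is that of the punctured surface (which is fine, since $\mathbb{C}\setminus\{0\}$ is connected), with the value then pinned down by continuity at $0$. The deck-group identification is clean and gives a conceptual reason for the cyclic factor, whereas the paper's sketch leaves this implicit in the sector-tracking.
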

\begin{proof}[Sketch]
Lemma 2 of \cite{ESS} shows that $\text{Trans}(\mathcal{O})$ is generated by an action of 
\[\prod_{\substack{d\ge 0,\\ n(d)\ge 1}}S_{n(d)}\] 
together with an action of $\prod_{i=1}^\kappa C_{d_i+1}$ on $\mathcal{O}$, as naturally defined from the definitions of $f_\alpha$ and $\rho_i$ above.  That is, any element $\tau\in\text{Trans}(\mathcal{O})$ may be written as a finite composition of various $f_\alpha$'s and $\rho_i$'s.  We may rearrange the order of this composition (possibly altering the indices of some $\rho_i$'s) to collect all $f_\alpha$'s with $\alpha$ belonging to the same $S_{n(d)}$ and all $\rho_i$'s corresponding to the same $\mathcal{O}_i$.  That $\tau$ may be uniquely written in such a way is verified by considering the indices of images of $2\pi$-sectors $c_j^i$.
\end{proof}

\begin{lem}\label{pi*f=A*pi}
Let $f\in\text{Aff}(\mathcal{O})$ with $\text{der}(f)=A$.  Then 
\[{\bm\pi}\circ f=A\cdot{\bm\pi},\]
where the notation on the right denotes the usual action of a matrix $A$ on the plane as a linear transformation.
\end{lem} 
\begin{proof}
Since ${\bm\pi}(\cdot)$ restricts on sufficiently small neighborhoods to give a (bijective) coordinate map for the translation atlas of $\mathcal{O}$, we have---by definition of $f$---that for each $v\in\mathbb{R}^2$ in the image of such a neighborhood,
\[{\bm\pi}\circ f\circ{\bm\pi}^{-1}(v)=Av+b\]
for some $b\in\mathbb{R}^2$.  This composition agrees on the images of the intersections of such neighborhoods; in particular, $b$ depends only on the component $\mathcal{O}_i$ to which the neighborhood belongs.  Choose some neighborhood as above which contains $0\in\mathcal{O}_i$ on its boundary.  The above composition (and each of the maps comprising it) extends continuously to the closure of its domain.  Since $f$ sends singularities to singularities, setting $v=0$ gives $b=0$.  As $\mathcal{O}_i$ was arbitrary, the result follows from the previous observations.  
\end{proof}

In Lemma 3 of \cite{ESS}, it is shown that $\text{der}(\text{Aff}(\mathcal{O}))=\mathrm{GL}_2\mathbb{R}$.  To each $A\in\mathrm{GL}_2^+\mathbb{R}$ we associate a canonical $f_A\in\text{Aff}^+(\mathcal{O})$ with $\text{der}(f_A)=A$ as follows.  Fix some $g_A\in\text{Aff}^+(\mathcal{O})$ with $\text{der}(g_A)=A$, and choose $\tau\in\text{Trans}(\mathcal{O})$ so that the composition $f_A:=\tau\circ g_A$ satisfies for each $1\le i\le \kappa$ both (i) $f_A(\mathcal{O}_i)=\mathcal{O}_i$ and (ii) for each point $p\in\mathcal{O}_i$, the angle measured counterclockwise from $p$ to $f_A(p)$ is non-negative and less than $2\pi$.  Existence of $\tau$ follows from Lemma \ref{Trans(O)}.  Notice that $\text{der}(f_A)=A$ since $\text{der}(\cdot)$ is a group homomorphism, $\tau\in\text{Trans}(\mathcal{O})=\text{ker}(\text{der})$ and $\text{der}(g_A)=A$.  Furthermore, conditions (i) and (ii) guarantee that $f_A$ is unique, regardless of the initial choice of $g_A$ (and subsequent choice of $\tau$).

We also note that any $g_A\in\text{Aff}^+(\mathcal{O})$ with $\text{der}(g_A)=A\in\mathrm{GL}_2^+\mathbb{R}$ may be written uniquely as $g_A=\tau'\circ f_A$ for some $\tau'\in\text{Trans}(O)$ (namely $\tau'=\tau^{-1}$ for $\tau$ as above), with $f_A$ the canonical affine automorphism associated to $A$.

\begin{defn}
With notation as above, let 
\[\text{Aff}_{\text{C}}^+(\mathcal{O}):=\{f_A\in\text{Aff}^+(\mathcal{O})\ |\ A\in\mathrm{GL}_2^+\mathbb{R}\}\]
be the collection of canonical affine automorphisms of $\mathcal{O}$.
For any $r\in\mathbb{R}_+$ and $p\in\mathcal{O}$, let 
\[rp:=f_{D(r)}(p),\]
where $f_{D(r)}\in\text{Aff}_{\text{C}}^+(\mathcal{O})$ with $D(r)=(\begin{smallmatrix}r & 0\\ 0 & r\end{smallmatrix})$.
\end{defn}

In particular, if $p\in \mathcal{O}_i$ with polar coordinates $(|p|,\theta)$, then $rp\in\mathcal{O}_i$ with polar coordinates $(r|p|,\theta)$.  Note then that ${\bm\pi}(rp)=r{\bm\pi}(p)$.  

While $\text{Trans}(\mathcal{O})$ is generally non-abelian, we have the following commutativity result:
\begin{lem}\label{centralizer_lemma}
The subgroup $\text{Trans}(\mathcal{O})\le\text{Aff}(\mathcal{O})$ belongs to the centralizer of $\text{Aff}_{\text{C}}^+(\mathcal{O})$, i.e.
\[\tau\circ f_A=f_A\circ\tau\]
for each $\tau\in\text{Trans}(\mathcal{O})$ and $f_A\in\text{Aff}_{\text{C}}^+(\mathcal{O})$.
\end{lem}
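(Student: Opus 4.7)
The plan is to show that the conjugate $g := \tau \circ f_A \circ \tau^{-1}$ satisfies the two defining properties of the canonical affine automorphism associated to $A$, from which $g = f_A$ follows by the uniqueness clause asserted just before the definition of $\text{Aff}_\text{C}^+(\mathcal{O})$. Rearranging $g = f_A$ then gives the desired commutation identity.

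First, since $\tau \in \text{Trans}(\mathcal{O}) = \ker(\text{der})$ and $\text{der}$ is a group homomorphism, $\text{der}(g) = \text{der}(\tau)\cdot A\cdot\text{der}(\tau)^{-1} = A$, so $g$ has the correct linear part. Next I would verify property (i), namely $g(\mathcal{O}_i) = \mathcal{O}_i$ for each $i$: by Lemma \ref{Trans(O)}, any $\tau$ acts on the set of components by some permutation $\alpha$ that preserves $d$-values, and since $f_A$ preserves each component individually, so does the conjugate $\tau \circ f_A \circ \tau^{-1}$.

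The main step is property (ii): for each $p \in \mathcal{O}_i$, the counterclockwise angle from $p$ to $g(p)$ lies in $[0, 2\pi)$. Here I would use the structural description of $\tau$ from Lemma \ref{Trans(O)}: every $\tau$ is built from $\rho_i$'s (which preserve magnitudes and shift arguments by $2\pi$ on a single component) and $f_\alpha$'s (which preserve polar coordinates while permuting components of equal $d$-value). It follows that if $p \in \mathcal{O}_i$ has polar coordinates $(r, \theta)$, then $q := \tau^{-1}(p)$ lies in some $\mathcal{O}_{i'}$ with $d_{i'} = d_i$ and has polar coordinates $(r, \theta - 2\pi k)$ for some integer $k$; moreover $\tau$ sends the $\mathcal{O}_{i'}$-point with polar coordinates $(r', \theta')$ to the $\mathcal{O}_i$-point with polar coordinates $(r', \theta' + 2\pi k)$. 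Writing $f_A(q)$ in polar coordinates as $(r', \theta')$, the defining angle condition for $f_A$ at $q$ reads $(\theta' - \theta + 2\pi k) \bmod 2\pi(d_{i'}+1) \in [0, 2\pi)$, which, because $d_{i'} = d_i$, is precisely the counterclockwise angle from $p$ to $g(p)$ modulo the total cone angle of $\mathcal{O}_i$. Hence property (ii) holds for $g$, and uniqueness of the canonical affine automorphism forces $g = f_A$.

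The main obstacle I anticipate is the bookkeeping in the last paragraph---making precise that $\tau$ acts on each component by shifting arguments by a fixed integer multiple of $2\pi$ (while possibly permuting components), and checking that the shift $+2\pi k$ introduced by $\tau$ on its image component exactly compensates for the shift $-2\pi k$ introduced by $\tau^{-1}$ on its source. Once this is spelled out using the normal form for $\tau$ from the proof of Lemma \ref{Trans(O)}, the angle inequality drops out and the uniqueness clause closes the argument.
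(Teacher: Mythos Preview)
Your proposal is correct and takes a genuinely different route from the paper's proof. The paper argues directly that $\tau\circ f_A(p)=f_A\circ\tau(p)$ for each $p$: it observes via Lemma~\ref{proj*f=A*proj} that both sides project to the same complex number, and then tracks the image of the ray $r_k^i$ under each composition to conclude that both sides land in the same $2\pi$-sector $c_\ell^j$, whence equality follows from injectivity of $\text{proj}(\cdot)$ on $2\pi$-sectors. Your approach instead conjugates and invokes the uniqueness clause that characterises $f_A$: you verify that $g=\tau\circ f_A\circ\tau^{-1}$ has linear part $A$, preserves each component, and satisfies the angle bound, so $g=f_A$ by uniqueness. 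The bookkeeping you flag---that $\tau$ restricted to any one component is an argument-shift by a fixed integer multiple of $2\pi$ followed by a component permutation, so the shifts introduced by $\tau^{-1}$ and $\tau$ cancel exactly in the angle computation---is precisely what is needed, and it is justified by the normal form in Lemma~\ref{Trans(O)}. Your argument is arguably cleaner, since it leverages the uniqueness statement already established rather than reproving a special case of it via rays; the paper's direct comparison, on the other hand, avoids appealing back to that uniqueness and is slightly more self-contained.
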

\begin{proof}
Let $\tau\in\text{Trans}(\mathcal{O}),\ f_A\in\text{Aff}_{\text{C}}^+(\mathcal{O})$ and $p\in\mathcal{O}$.  We must show 
\[\tau \circ f_A(p)=f_A\circ \tau(p).\]
Since $\text{der}(\tau\circ f_A)=\text{der}(f_A\circ\tau)=A$, both sides of the previous line are sent under ${\bm\pi}(\cdot)$ to the same point $z\in\mathbb{C}$ (Lemma \ref{pi*f=A*pi}).  Suppose $p\in c_k^i$ and $\tau(\mathcal{O}_i)=\mathcal{O}_j$ with $\tau(c_k^i)=c_\ell^j$.  Let $r_k^i$ denote the set of points in $\mathcal{O}_i$ with argument $2\pi k$ (i.e. $r_k^i$ is the ray along the positive real axis in $c_k^i$).  Note that ${\bm\pi}(\cdot)$ sends each of $\tau\circ f_A(c_k^i)$ and $f_A\circ \tau(c_k^i)$ bijectively onto $\mathbb{C}$, and these sets are completely determined by the images $\tau\circ f_A(r_k^i)$ and $f_A\circ \tau(r_k^i)$ in $\mathcal{O}_j$: the former are the sets of points within angle $2\pi$ counterclockwise of the respective images of the ray $r_k^i$.  Since there is only one point in each $2\pi$-sector which maps to $z\in\mathbb{C}$ under ${\bm\pi}(\cdot)$, it suffices to show that $\tau\circ f_A(r_k^i)=f_A\circ \tau(r_k^i)$.  By definition of $f_A$ and $r_k^i$, the image $f_A(r_k^i)$ belongs to $c_k^i$, and thus $\tau\circ f_A(r_k^i)$ belongs to $c_\ell^j$.  But also $\tau(r_k^i)$ belongs to $c_\ell^j$ by assumption, and again by definition of $f_A$, the image $f_A\circ \tau(r_k^i)$ belongs to $c_\ell^j$ as well.  Since the images $\tau\circ f_A(r_k^i)$ and $f_A\circ \tau(r_k^i)$ are rays emanating from $0$ in $c_\ell^j$ and pointing in the same direction, we have $\tau\circ f_A(r_k^i)=f_A\circ \tau(r_k^i)$ and the result follows.
\end{proof}

\subsection{Permissible triples}\label{Permissible triples}

Here we introduce terminology and present results regarding particular subsets of the canonical surface $\mathcal{O}=\mathcal{O}(d_1,\dots,d_\kappa)$.  While the material of this subsection becomes technical, the underlying notions are rooted in elementary Euclidean geometry.  As we shall see in \S\ref{Marked Voronoi staples determine permissible triples} and \S\ref{Finiteness of lattice surfaces with given Veech groups}, the results proven here will be crucial for our proof of Theorem \ref{finiteness_thm}.  We begin with definitions and notation.

\begin{defn}\label{half-space}
The \textit{half-space} determined by $p\in\mathcal{O}_i$ is 
\[H(p):=\{q\in\mathcal{O}_i\ |\ d(0,q)\le d(p,q)\}.\]
\end{defn}

Note that $H(p)$ is convex in the sense that the length-minimizing path between any two points in $H(p)$ is also contained in $H(p)$.  Unless otherwise noted, a (closed or open) \textit{$\theta$-sector} of $\mathcal{O}_i$ is a (closed or open) sector of infinite radius, centered at $0\in\mathcal{O}_i$, and of angle $\theta$.  For the following definition and subsequent discussion, see Figure \mbox{\ref{circumcircle_etc}}.

\begin{figure}[t]
\begin{minipage}{.49\textwidth}
\centering
\begin{tikzpicture}[scale=1.5]
\draw [->,dashed,line width=1] (0,0) -- (2,0); 
\draw [<-,line width=1] (-2,0) -- (0,0);
\draw [<->,line width=1] (0,-.8) -- (0,2); 
\draw [dashed] (-.6,.8) circle (1);
\filldraw (0,0) circle (1pt) node[below right] {$0$};
\filldraw (-.6,.8+1) circle (1pt) node[above] {$p$};
\filldraw ({-.6-sqrt(3)/2},.8-.5) circle (1pt) node[left] {$q$};
\filldraw (-.6,.8) circle (1pt); 
\draw [->] (-1.7,1) node[left] {$c(p,q)$} -- (-.68,.82);
\draw (0,0) -- (-.6,.8+1) -- ({-.6-sqrt(3)/2},.8-.5) -- cycle;
\draw [<-] (.45,.8) -- (.7,.8) node[right] {$B(p,q)$};
\draw [->] (-1.45,-.3) node[below] {$\triangle(p,q)$} -- (-1.1,.2);
\draw[dashed] ({-1.8*(-2.3)/sqrt(.6^2+1.8^2)-.6/2},{-.6*(-2.3)/sqrt(.6^2+1.8^2)+1.8/2}) -- ({-1.8*(1.7)/sqrt(.6^2+1.8^2)-.6/2},{-.6*(1.7)/sqrt(.6^2+1.8^2)+1.8/2});
\draw[dashed] ({-.3*(-1.9)/sqrt((-.6-sqrt(3)/2)^2+.3^2)+(-.6-sqrt(3)/2)/2},{(-.6-sqrt(3)/2)*(-1.9)/sqrt((-.6-sqrt(3)/2)^2+.3^2)+.3/2}) -- ({-.3*(.9)/sqrt((-.6-sqrt(3)/2)^2+.3^2)+(-.6-sqrt(3)/2)/2},{(-.6-sqrt(3)/2)*(.9)/sqrt((-.6-sqrt(3)/2)^2+.3^2)+.3/2});
\end{tikzpicture}
\end{minipage}
\begin{minipage}{.49\textwidth}
\centering
\begin{tikzpicture}[scale=1.5]
\draw [->,line width=1] (0,0) -- (2,0); 
\draw [<-,line width=1] (-2,0) -- (0,0);
\draw [<->,line width=1] (0,-.8) -- (0,2); 
\draw [dashed] (-.6,.8) circle (1);
\filldraw (0,0) circle (1pt) node[below right] {$0$};
\filldraw (-.6,.8+1) circle (1pt) node[above] {${\bm\pi}(p)$};
\filldraw ({-.6-sqrt(3)/2},.8-.5) circle (1pt) node[label={[shift={(-.4,-.55)}]${\bm\pi}(q)$}] {};
\filldraw (-.6,.8) circle (1pt); 
\draw [->] (-1.7,1) node[left] {${\bm\pi}(c(p,q))$} -- (-.68,.82);
\draw (0,0) -- (-.6,.8+1) -- ({-.6-sqrt(3)/2},.8-.5) -- cycle;
\draw [<-] (.45,.8) -- (.7,.8) node[right] {${\bm\pi}(B(p,q))$};
\draw [->] (-1.45,-.3) node[below] {${\bm\pi}(\triangle(p,q))$} -- (-1.1,.2);
\draw[dashed] ({-1.8*(-2.3)/sqrt(.6^2+1.8^2)-.6/2},{-.6*(-2.3)/sqrt(.6^2+1.8^2)+1.8/2}) -- ({-1.8*(1.7)/sqrt(.6^2+1.8^2)-.6/2},{-.6*(1.7)/sqrt(.6^2+1.8^2)+1.8/2});
\draw[dashed] ({-.3*(-1.9)/sqrt((-.6-sqrt(3)/2)^2+.3^2)+(-.6-sqrt(3)/2)/2},{(-.6-sqrt(3)/2)*(-1.9)/sqrt((-.6-sqrt(3)/2)^2+.3^2)+.3/2}) -- ({-.3*(.9)/sqrt((-.6-sqrt(3)/2)^2+.3^2)+(-.6-sqrt(3)/2)/2},{(-.6-sqrt(3)/2)*(.9)/sqrt((-.6-sqrt(3)/2)^2+.3^2)+.3/2});
\end{tikzpicture}
\end{minipage}
\caption{Left: The triangle $\triangle(p,q)$, circumcenter $c(p,q)$, and ball $B(p,q)$ determined by $p$ and $q$ in $\mathcal{O}_i$.  The circumcircle $C(p,q)$ is the boundary of $B(p,q)$, and the straight dashed lines through $c(p,q)$ are the boundaries of the half-spaces $H(p)$ and $H(q)$.  Right: The image in $\mathbb{C}$ under ${\bm\pi}(\cdot)$.}
\label{circumcircle_etc}
\end{figure}
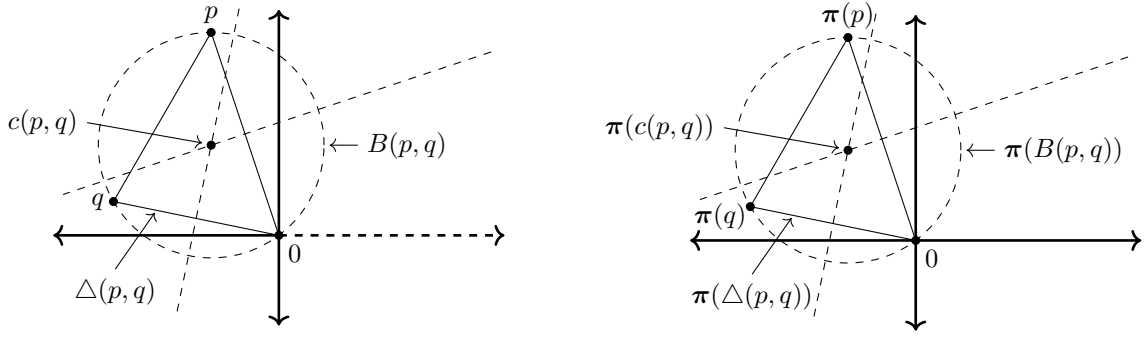

\begin{defn}\label{circumcenter_circumcircle_ball}
Let $p,q\in\mathcal{O}_i\backslash\{0\}$ be two points with distinct arguments in the same open $\pi$-sector of $\mathcal{O}_i$.  The \textit{triangle} determined by $p$ and $q$, denoted $\triangle(p,q)$, is the union of straight line segments from $0\in\mathcal{O}_i$ to $p$, from $p$ to $q$, and from $q$ to $0$.  Let 
\[c(p,q):=\partial H(p)\cap\partial H(q)\]
denote the \textit{circumcenter} determined by $p$ and $q$,
\[C(p,q):=\{z\in\mathcal{O}_i\ |\ d(z,c(p,q))=|c(p,q)|\}\]
denote the \textit{circumcircle} determined by $p$ and $q$, and
\[B(p,q):=\{z\in\mathcal{O}_i\ |\ d(z,c(p,q))<|c(p,q)|\}\]
denote the \textit{ball} determined by $p$ and $q$.
\end{defn}

Locally, and away from a non-removable singularity, the geometry on $\mathcal{O}_i$ is Euclidean, so the circumcenter, circumcircle and ball of Definition \mbox{\ref{circumcenter_circumcircle_ball}} may be viewed as isometric copies of their Euclidean namesakes in the plane.  To be more precise, Lemma 4 of \mbox{\cite{ESS}} implies that for any two points $p_1,p_2\in\mathcal{O}_i$ belonging to the same closed $\pi$-sector, the distance $d(p_1,p_2)$ between $p_1$ and $p_2$ in $\mathcal{O}_i$ equals the distance between their respective images ${\bm\pi}(p_1)$ and ${\bm\pi}(p_2)$ in the plane.  Hence on any such sector, ${\bm\pi}(\cdot)$ is an isometry.  Using Euclidean geometry, we find that ${\bm\pi}(c(p,q))$ is the center of the Euclidean circle ${\bm\pi}(C(p,q))$ of radius $|c(p,q)|$, and ${\bm\pi}(B(p,q))$ is the open ball whose boundary is ${\bm\pi}(C(p,q))$.  Moreover, ${\bm\pi}(C(p,q))$ is the circumcircle of the Euclidean triangle ${\bm\pi}(\triangle(p,q))$ with vertices $0,{\bm\pi}(p),{\bm\pi}(q)$, and $0,{\bm\pi}(p),{\bm\pi}(q)\in{\bm\pi}(C(p,q))$ implies $0,p,q\in C(p,q)=\partial B(p,q)$.

\begin{remark}
We use the same notation and terminology from Definition \mbox{\ref{circumcenter_circumcircle_ball}} for the analogous objects determined by two points in the Euclidean plane $\mathbb{C}$.
\end{remark}

\begin{defn}\label{P(O)}
Define the set of \textit{oppositely projected pairs} of points in $\mathcal{O}$ by
\[\mathbb{P}(\mathcal{O}):=\{\{p,p^-\}\subset\mathcal{O}\ |\ {\bm\pi}(p^-)=-{\bm\pi}(p)\neq 0\},\]
and for any subset $P\subset\mathbb{P}(\mathcal{O})$ of oppositely projected pairs, define the \textit{forgotten} version of $P$ by
\[P_F:=\bigcup_{\{p,p^-\}\in P}\{p,p^-\};\]
that is, the pairing inherent to $P$ is `forgotten' in the set $P_F\subset\mathcal{O}$.  For $r\in\mathbb{R}_+$ and $P\subset\mathbb{P}(\mathcal{O})$, let $rP:=\{\{rp,rp^-\}\ |\ \{p,p^-\}\in P\}$ and $rP_F:=(rP)_F$.
Call a subset $P\subset\mathbb{P}(\mathcal{O})$ \textit{limit-point free} if its forgotten version $P_F$ has no limit points in $\mathcal{O}$.
\end{defn}

Recall that ${\bm\pi}(\cdot)$ is a degree $2g-2+\kappa$ branched covering with a single branch point $0\in\mathbb{C}$.  Thus for any $p\in\mathcal{O}$ with ${\bm\pi}(p)\neq 0$, there are precisely $2g-2+\kappa$ points $p^-\in\mathcal{O}$ in the fiber above $-{\bm\pi}(p)$.  That is,
\[\big|\{p^-\ |\ \{p,p^-\}\in\mathbb{P}(\mathcal{O})\}\big|=2g-2+\kappa.\]
The subsets $P\subset\mathbb{P}(\mathcal{O})$ of interest to us do not have this multiplicity of elements:

\begin{defn}
Call a nonempty subset $P\subset\mathbb{P}(\mathcal{O})$ of oppositely projected pairs \textit{distinctive} if for any $p\in P_F$,
\[\big|\{p^-\ |\ \{p,p^-\}\in P\}\big|=1,\]
or, equivalently, for any $p\in P_F$, there is a unique point $p^-\in P_F$ satisfying ${\bm\pi}(p^-)=-{\bm\pi}(p)$.
\end{defn}

Now let $(p,q)\in\mathcal{O}^2$ be an ordered pair of two regular points with different arguments which belong to the same open $\pi$-sector of the same component of $\mathcal{O}$ (see the left-hand side of Figure \mbox{\ref{[p,q]_plot}}), and consider the difference ${\bm\pi}(q)-{\bm\pi}(p)$ of their images under ${\bm\pi(\cdot)}$ in $\mathbb{C}$.  As above, there are exactly $2g-2+\kappa$ points $u\in\mathcal{O}$ satisfying ${\bm\pi}(u)={\bm\pi}(q)-{\bm\pi}(p)$.  Each of the $2g-2+\kappa$ oppositely projected pairs $\{p,p^-\}\in\mathbb{P}(\mathcal{O})$ naturally announces a unique such point $u$, namely the $u$ lying nearest to $p^-$.  A distinctive set $P$ determines a unique point $p^-$ and hence a unique point $u$.  We fix special notation for this point $u$ in the following:

\begin{defn}\label{[p,q]}
Let $P,Q\subset\mathbb{P}(\mathcal{O})$ be distinctive subsets of oppositely projected pairs and $(\{p,p^-\},\{q,q^-\})\in P\times Q$.  If $p$ and $q$ belong to the same open $\pi$-sector of the same component of $\mathcal{O}$ and satisfy $\text{arg}(p)\neq\text{arg}(q)$, then we denote by $[p,q]$ the unique element of $\mathcal{O}$ for which
\begin{enumerate}
\item[(i)] ${\bm\pi}([p,q])={\bm\pi}(q)-{\bm\pi}(p)$, and

\item[(ii)] $[p,q]$ belongs to the same open $\pi$-sector of the same component of $\mathcal{O}$ as the point $p^-$.
\end{enumerate}
See Figure \mbox{\ref{[p,q]_plot}}.
\end{defn}

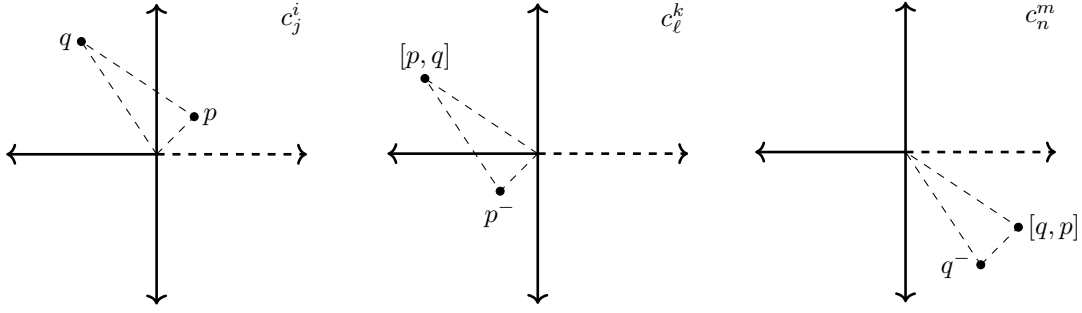
\begin{figure}[t]
\begin{minipage}{.3\textwidth}
\centering
\begin{tikzpicture}
\draw[->,line width=1,dashed] (0,0) -- (2,0);
\draw[->,line width=1] (0,0) -- (-2,0);
\draw[<->,line width=1] (0,-2) -- (0,2);
\draw[dashed] (0,0) -- (.5,.5) -- (-1,1.5) --cycle;
\filldraw (.5,.5) circle (1.5pt) node[right] {$p$};
\filldraw (-1,1.5) circle (1.5pt) node[left] {$q$};
\draw (1.8,1.8) circle (0pt) node {$c_j^i$};
\end{tikzpicture}
\end{minipage}
\begin{minipage}{.3\textwidth}
\centering
\begin{tikzpicture}
\draw[->,line width=1,dashed] (0,0) -- (2,0);
\draw[->,line width=1] (0,0) -- (-2,0);
\draw[<->,line width=1] (0,-2) -- (0,2);
\draw[dashed] (0,0) -- (-1-.5,1.5-.5) -- (-.5,-.5) -- cycle;
\filldraw (-.5,-.5) circle (1.5pt) node[below] {$p^-$};
\filldraw (-1-.5,1.5-.5) circle (1.5pt) node[above] {$[p,q]$};
\draw (1.8,1.8) circle (0pt) node {$c_\ell^k$};
\end{tikzpicture}
\end{minipage}
\begin{minipage}{.3\textwidth}
\centering
\begin{tikzpicture}
\draw[->,line width=1,dashed] (0,0) -- (2,0);
\draw[->,line width=1] (0,0) -- (-2,0);
\draw[<->,line width=1] (0,-2) -- (0,2);
\draw[dashed] (1,-1.5) -- (1.5,-1) -- (0,0) --cycle;
\filldraw (1,-1.5) circle (1.5pt) node[left] {$q^-$};
\filldraw (1.5,-1) circle (1.5pt) node[right] {$[q,p]$};
\draw (1.8,1.8) circle (0pt) node {$c_n^m$};
\end{tikzpicture}
\end{minipage}
\caption{The points $[p,q]$ and $[q,p]$, as determined by $\{p,p^-\}$ and $\{q,q^-\}$.}
\label{[p,q]_plot}
\end{figure}

\begin{remark}
The reader should note that the point $[p,q]$ crucially depends on the distinctive set $P$, as distinctiveness uniquely determines the point $p^-$ in Definition \mbox{\ref{[p,q]}}.  However, as $P$ shall be clear by context it is absent from the notation $[p,q]$.
\end{remark}

Notice from Definition \mbox{\ref{[p,q]}} that $[p,q]$ is defined if and only if $[q,p]$ is defined.  The following technical result is needed for Proposition \mbox{\ref{permute_permissible_triples}} below and is due to the fact that for distinctive sets, the points $p$ and $q$ naturally determine three isometric triangles in $\mathcal{O}$.  See again Figure \mbox{\ref{[p,q]_plot}}.

\begin{lem}\label{[p^bullet,[p,q]]}
Let $P,Q,U\subset\mathbb{P}(\mathcal{O})$ be distinctive subsets of oppositely projected pairs.  For any $(\{p,p^-\},\{q,q^-\})\in P\times Q$ for which $[p,q]\in\mathcal{O}$ is defined, we have that
$\{[p,q],[q,p]\}\in\mathbb{P}(\mathcal{O})$ is an oppositely projected pair.  Moreover, if $\{[p,q],[q,p]\}\in U$ then $[p^-,[p,q]]$ and $[[p,q],p^-]$ are defined and equal $q$ and $q^-$, respectively.
\end{lem}
\begin{proof}
Definition \ref{[p,q]} gives that 
\[{\bm\pi}([p,q])={\bm\pi}(q)-{\bm\pi}(p)=-{\bm\pi}([q,p]).\]
Since $p$ and $q$ belong to the same $\pi$-sector with $\text{arg}(p)\neq\text{arg}(q)$, we have ${\bm\pi}(q)-{\bm\pi}(p)\neq 0$, so $\{[p,q],[q,p]\}\in\mathbb{P}(\mathcal{O})$.

Condition (ii) of Definition \ref{[p,q]} guarantees that $p^-$ and $[p,q]$ belong to the same open $\pi$-sector of the same component.  Since $p$ and $q$ belong to the same open $\pi$-sector with different arguments, condition (i) guarantees that $\text{arg}(p^-)\neq\text{arg}([p,q])$, so $[p^-,[p,q]]$ and $[[p,q],p^-]$ are defined.  Next we show $[p^-,[p,q]]=q$.  Condition (ii) of Definition \ref{[p,q]} (applied to $[p^-,[p,q]]$) gives that $p$ and $[p^-,[p,q]]$ belong to the same open $\pi$-sector of the same component of $\mathcal{O}$; by assumption, the same is true of $p$ and $q$, so we find that $q$ and $[p^-,[p,q]]$ belong to the same open $2\pi$-sector.  As ${\bm\pi}(\cdot)$ is injective on open $2\pi$-sectors, it suffices to show that these latter two points are sent under this map to the same point in the plane.  We compute
\[{\bm\pi}([p^-,[p,q]])={\bm\pi}([p,q])-{\bm\pi}(p^-)={\bm\pi}(q)-{\bm\pi}(p)+{\bm\pi}(p)={\bm\pi}(q)\]
as desired.  The proof that $[[p,q],p^-]$ equals $q^-$ is similar.  
\end{proof}

The following definition and subsequent results will prove to be essential in \S\ref{Marked Voronoi staples determine permissible triples} and \S\ref{Finiteness of lattice surfaces with given Veech groups} below.

\begin{defn}\label{permissible_triples_defn}
Let $P,\ Q$ and $U$ be distinctive, limit-point free subsets of $\mathbb{P}(\mathcal{O})$ and $(\{p,p^-\},\{q,q^-\},\{u,u^-\})$ a triple of oppositely projected pairs in $P\times Q\times U$.  We call $(p,q,u)\in P_F\times Q_F\times U_F$ a \textit{permissible triple} if there exist \textit{permissible scalars} $(r,s,t)\in\mathbb{R}_+^3$ such that, for $(\{rp,rp^-\},\{sq,sq^-\},\{tu,tu^-\})\in rP\times sQ\times tU$,
\begin{enumerate}
\item[(i)] $[rp,sq]$ and $[sq,rp]$ are defined and equal $tu$ and $tu^-$, respectively, and

\item[(ii)] 
\[\big(B(rp,sq)\cup B(tu,rp^-)\cup B(sq^-,tu^-)\big)\cap \left(rP_F\cup sQ_F\cup tU_F\right)=\varnothing.\]
\end{enumerate}
(See Figure \mbox{\ref{permissible_triple_plot}}.)  For a collection ${\bf P}=\{P_i\}_{i\in I}$ of distinctive, limit-point free subsets of $\mathbb{P}(\mathcal{O})$, we let $\mathcal{P}({\bf P})$ denote the \textit{set of all permissible triples} arising from ${\bf P}$; that is,
\[\mathcal{P}({\bf P}):=\{(p_i,p_j,p_k)\in (P_i)_F\times (P_j)_F\times (P_k)_F\ |\ i,j,k\in I\ \text{and}\ (p_i,p_j,p_k)\ \text{is a permissible triple}\}.\]
\end{defn}

\begin{figure}[t]
\begin{minipage}{.3\textwidth}
\centering
\begin{tikzpicture}
\draw (1.8,1.8) circle (0pt) node {$c_j^i$};
\draw[->,line width=1,dashed] (0,0) -- (2.2,0);
\draw[->,line width=1] (0,0) -- (-2.2,0);
\draw[<->,line width=1] (0,-2.2) -- (0,2.2);
\draw[dashed] (0,1) circle (1);
\draw[->] (1.3,1) node[right] {\footnotesize $B(rp,sq)$} -- (.8,1);
\draw[dashed] (0,0) -- ({0+cos(315)},{1+sin(315)}) -- ({0+cos(120)},{1+sin(120)}) --cycle;
\filldraw (0,0) circle (2pt);
\filldraw[fill=cyan] ({0+cos(315)},{1+sin(315)}) circle (2pt) node[right] {\footnotesize $rp$};
\filldraw[fill=yellow] ({0+cos(120)},{1+sin(120)}) circle (2pt) node[left] {\footnotesize $sq$};
\filldraw[fill=red] (-1.8, .3) circle (2pt);
\filldraw[fill=red] (1.3, -.6) circle (2pt);
\filldraw[fill=red] (.8,1.8) circle (2pt);
\filldraw[fill=yellow] (1.6,.4) circle (2pt);
\filldraw[fill=yellow] (-.5,-1) circle (2pt);
\filldraw[fill=cyan] (-1.1,-1.3) circle (2pt);
\filldraw[fill=cyan] (1.7,-1.8) circle (2pt);
\filldraw[fill=cyan] (-1.3,1.2) circle (2pt);
\end{tikzpicture}
\end{minipage}
\begin{minipage}{.3\textwidth}
\centering
\begin{tikzpicture}
\draw (1.8,1.8) circle (0pt) node {$c_\ell^k$};
\draw[->,line width=1,dashed] (0,0) -- (2.2,0);
\draw[->,line width=1] (0,0) -- (-2.2,0);
\draw[<->,line width=1] (0,-2.2) -- (0,2.2);
\draw[dashed] ({-cos(315)},{-sin(315)}) circle (1);
\draw[->] (.6,.8) node[right] {\footnotesize $B(tu,rp^-)$} -- (.1,.8);
\draw[dashed] ({-cos(315)},{-1-sin(315)}) -- (0,0) -- ({cos(120)-cos(315)},{sin(120)-sin(315)}) --cycle;
\filldraw[fill=cyan] ({-cos(315)},{-1-sin(315)}) circle (2pt) node[below] {\footnotesize $rp^-$};
\filldraw (0,0) circle (2pt);
\filldraw[fill=red] ({cos(120)-cos(315)},{sin(120)-sin(315)}) circle (2pt) node[above left] {\footnotesize $tu$};
\filldraw[fill=yellow] (-1.8, .2) circle (2pt);
\filldraw[fill=red] (.5, -.4) circle (2pt);
\filldraw[fill=yellow] (1.5,1.3) circle (2pt);
\filldraw[fill=red] (-1.8,-.6) circle (2pt);
\filldraw[fill=cyan] (.6,.5) circle (2pt);
\filldraw[fill=yellow] (-1,-1.4) circle (2pt);
\filldraw[fill=red] (1.7,-1.5) circle (2pt);
\filldraw[fill=cyan] (1,-1.8) circle (2pt);
\end{tikzpicture}
\end{minipage}
\begin{minipage}{.3\textwidth}
\centering
\begin{tikzpicture}
\draw (1.8,1.8) circle (0pt) node {$c_n^m$};
\draw[->,line width=1,dashed] (0,0) -- (2.2,0);
\draw[->,line width=1] (0,0) -- (-2.2,0);
\draw[<->,line width=1] (0,-2.2) -- (0,2.2);
\draw[dashed] (0,0) arc (120:420:1);
\draw[->] (-.8,-.9) node[left] {\footnotesize $B(sq^-,tu^-)$} -- (-.3,-.9);
\draw[dashed] ({-cos(120)},{-1-sin(120)}) -- ({cos(315)-cos(120)},{sin(315)-sin(120)}) -- (0,0) --cycle;
\filldraw[fill=yellow] ({-cos(120)},{-1-sin(120)}) circle (2pt) node[below] {\footnotesize $sq^-$};
\filldraw[fill=red] ({cos(315)-cos(120)},{sin(315)-sin(120)}) circle (2pt) node[right] {\footnotesize $tu^-$};
\filldraw (0,0) circle (2pt);
\filldraw[fill=cyan] (-.9, .3) circle (2pt);
\filldraw[fill=cyan] (1.6, -.5) circle (2pt);
\filldraw[fill=yellow] (-.7,1.7) circle (2pt);
\filldraw[fill=yellow] (1.6,1.4) circle (2pt);
\filldraw[fill=red] (-1.4,-.1) circle (2pt);
\filldraw[fill=red] (.4,.8) circle (2pt);
\filldraw[fill=cyan] (-1,-1.9) circle (2pt);
\filldraw[fill=yellow] (-1.7,-1.5) circle (2pt);
\end{tikzpicture}
\end{minipage}
\caption{A permissible triple $(p,q,u)$ with permissible scalars $(r,s,t)$.  Points of $rP_F,\ sQ_F$ and $tU_F$ are in cyan, yellow, and red, respectively.  Condition (i) of Definition \ref{permissible_triples_defn} is met, as $[rp,sq]=tu$ and $[sq,rp]=tu^-$.  Condition (ii) is met since none of the open balls $B(rp,sq),\ B(tu,rp^-)$ or $B(sq^-,tu^-)$ contain any points of $rP_F,\ sQ_F$ or $tU_F$.}
\label{permissible_triple_plot}
\end{figure}
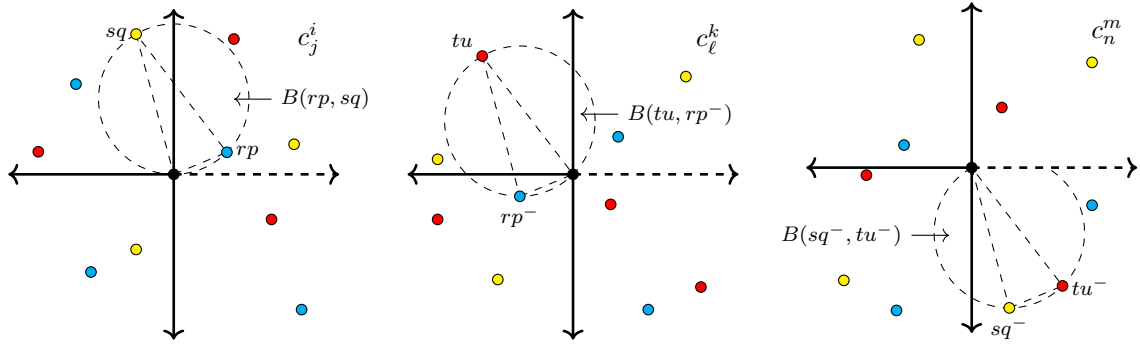

One immediate, but important, observation regarding permissible triples and permissible scalars is the following:

\begin{prop}\label{scalars_unique}
Let $(p,q,u)$ be a permissible triple with permissible scalars $(r,s,t)\in \mathbb{R}_+^3$.  The scalars $s$ and $t$ are uniquely determined by $p,q,u\in\mathcal{O}$ and $r\in\mathbb{R}_+$.  In particular, the set of all permissible scalars for $(p,q,u)$ is precisely the ray $\{(ar,as,at)\ |\ a\in\mathbb{R}_+\}$.
\end{prop}
\begin{proof}
By Definitions \ref{[p,q]} and \ref{permissible_triples_defn}, we have
\begin{equation}\label{tu=sq-rp}
t{\bm\pi}(u)={\bm\pi}([rp,sq])=s{\bm\pi}(q)-r{\bm\pi}(p).
\end{equation}
Rearranging, we find that $s$ and $t$ are solutions to
\[\begin{pmatrix}{\bm\pi}(q)_x & -{\bm\pi}(u)_x\\
{\bm\pi}(q)_y & -{\bm\pi}(u)_y\end{pmatrix}\begin{pmatrix} s\\ t\end{pmatrix}=\begin{pmatrix}r{\bm\pi}(p)_x\\ r{\bm\pi}(p)_y\end{pmatrix}.\]
The fact that $rp$ and $sq$ belong to the same open $\pi$-sector with different arguments implies that ${\bm\pi}(p)$ and ${\bm\pi}(q)$ are $\mathbb{R}$-linearly independent.  This, together with Equation (\ref{tu=sq-rp}), implies that also ${\bm\pi}(u)$ and ${\bm\pi}(q)$ are $\mathbb{R}$-linearly independent.  Hence the matrix on the left side of the previous equation is invertible, so $s$ and $t$ are uniquely determined.  

For the second statement, let $P_1, P_2\subset\mathbb{P}(O)$ be distinctive and $a\in\mathbb{R}_+$.  From Definition \mbox{\ref{[p,q]}}, one finds for $(\{p_1,p_1^-\},\{p_2,p_2^-\})\in P_1\times P_2$ and $(\{ap_1,ap_1^-\},\{ap_2,ap_2^-\})\in aP_1\times aP_2$ that if $[p_1,p_2]$ is defined then $[ap_1,ap_2]$ is defined and equals $a[p_1,p_2]$.  It follows then from Definition \mbox{\ref{permissible_triples_defn}} that if $(r,s,t)$ are permissible scalars for $(p,q,u)$, then so are $(a r,a s,a t)$ for any $a\in\mathbb{R}_+$.  On the other hand, if $(r_1,s_1,t_1)$ and $(r_2,s_2,t_2)$ are both permissible scalars for $(p,q,u)$, then so are $(1,s_1/r_1,t_1/r_1)$ and $(1,s_2/r_2,t_2/r_2)$.  The first statement of this proposition implies that $(r_1,s_1,t_1)$ and $(r_2,s_2,t_2)$ belong to a common ray.
\end{proof}

Note that the order of entries of a permissible triple $(p,q,u)$ is important in Definition \ref{permissible_triples_defn}; for instance, the first two entries $p$ and $q$ must necessarily belong to the same open $\pi$-sector of the same component of $\mathcal{O}$ for condition (i) to hold.  Nevertheless, this ordering does admit some flexibility.

\begin{prop}\label{permute_permissible_triples}
With notation as in Definition \ref{permissible_triples_defn}, the following statements are equivalent:
\begin{enumerate}
\item[(a)] $(p,q,u)$ is a permissible triple with permissible scalars $(r,s,t)$,
\item[(b)] $(q^-,u^-,p)$ is a permissible triple with permissible scalars $(s,t,r)$, and
\item[(c)] $(u,p^-,q^-)$ is a permissible triple with permissible scalars $(t,r,s)$.
\end{enumerate}
\end{prop}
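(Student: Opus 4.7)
The plan is to exploit the cyclic nature of the three statements. Define the shift
\begin{equation*}
\sigma: (p, q, u, r, s, t) \longmapsto (q^\circ, u^\circ, p, s, t, r).
\end{equation*}
Then $\sigma$ carries the data of (a) to that of (b); applying $\sigma$ again, and using the involution $(x^\circ)^\circ = x$, sends the data of (b) to that of (c) and once more back to (a). So $\sigma^3 = \mathrm{id}$, and it is enough to prove the single implication (a) $\Rightarrow$ (b); iterating $\sigma$ delivers the remaining implications.

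Assume (a). To verify condition (i) of Definition \ref{permissible_triples_defn} for $(q^\circ, u^\circ, p)$ with scalars $(s, t, r)$, the required identities are $[sq^\circ, tu^\circ] = rp$ and $[tu^\circ, sq^\circ] = rp^\circ$ (along with the asserted definedness). These follow directly from Proposition \ref{[p^bullet,[p,q]]}: condition (i) of (a) gives $[sq, rp] = tu^\circ$, so applying Proposition \ref{[p^bullet,[p,q]]} to the pair $(sq, rp) \in \mathbb{P}(\mathcal{O})$ (taking $sq$ in the role of $p$ and $rp$ in the role of $q$) produces precisely these two equalities, and asserts they are defined.

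For condition (ii), substituting $p \mapsto q^\circ$, $q \mapsto u^\circ$, $u \mapsto p$ into the definition and simplifying via $(q^\circ)^\circ = q$ and $(u^\circ)^\circ = u$, the required disjointness reads
\begin{equation*}
\bigl(B(sq^\circ, tu^\circ) \cup B(rp, sq) \cup B(tu, rp^\circ)\bigr) \cap \bigl(sQ_F \cup tU_F \cup rP_F\bigr) = \varnothing,
\end{equation*}
which is literally condition (ii) of (a) after reordering the three balls in the first union and the three scaled sets in the second. Hence this condition is automatic, requiring no further work.

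The entire geometric content sits in Proposition \ref{[p^bullet,[p,q]]}; the only real obstacle is careful bookkeeping of $\circ$-superscripts through the shift $\sigma$, after which the proof collapses to the observation that the defining data is manifestly symmetric under cyclic permutation of the three pairs $\{(rp, sq), (tu, rp^\circ), (sq^\circ, tu^\circ)\}$ and the three sets $\{rP_F, sQ_F, tU_F\}$.
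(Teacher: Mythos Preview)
Your proof is correct and follows essentially the same approach as the paper: reduce to the single implication (a) $\Rightarrow$ (b) by cyclic symmetry, invoke Proposition \ref{[p^bullet,[p,q]]} for condition (i), and observe that condition (ii) is literally unchanged after reordering. The only minor quibble is the phrase ``the pair $(sq, rp) \in \mathbb{P}(\mathcal{O})$,'' which is a slight abuse of notation since elements of $\mathbb{P}(\mathcal{O})$ are unordered pairs $\{p,p^\circ\}$; but your parenthetical clarification makes the intended substitution clear.
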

\begin{proof}
We need only show $(a)$ implies $(b)$: the same argument will give $(b)$ implies $(c)$ and $(c)$ implies $(a)$.  Let $(p,q,u)$ be a permissible triple with permissible scalars $(r,s,t)$.  We claim that $(q^-,u^-,p)$ is a permissible triple with permissible scalars $(s,t,r)$.  
\begin{enumerate}
\item[(i)] We must show that $[sq^-,tu^-]$ and $[tu^-,sq^-]$ are defined and equal $rp$ and $rp^-$, respectively.  We have by assumption that $[sq,rp]$ is defined and equals $tu^-$.  By Lemma \mbox{\ref{[p^bullet,[p,q]]}}, both $[sq^-,[sq,rp]]=[sq^-,tu^-]$ and $[[sq,rp],sq^-]=[tu^-,sq^-]$ are defined.  By the same Lemma, these equal $rp$ and $rp^-$, respectively.
\item[(ii)] We have
\[\left(B(sq^-,tu^-)\cup B(rp,sq)\cup B(tu,rp^-)\right)\cap \left(sQ_F\cup tU_F\cup rP_F\right)=\varnothing\]
by assumption.
\end{enumerate}
\end{proof}

One might suspect from Definition \ref{permissible_triples_defn} that it is difficult for some $(p,q,u)\in P_F\times Q_F\times U_F$ to be a permissible triple.  Indeed, this suspicion is confirmed by the following:

\begin{lem}\label{permissible_triples_finite}
Let ${\bf P}=\{P_i\}_{i=1}^n$ be a finite collection of distinctive, limit-point free subsets of $\mathbb{P}(\mathcal{O})$.  Then the set $\mathcal{P}({\bf P})$ of all permissible triples arising from ${\bf P}$ is finite.
\end{lem}
\begin{proof}
It suffices to show that for any distinctive, limit-point free $P,Q,U\subset\mathbb{P}(\mathcal{O})$, the set of permissible triples $(p,q,u)\in P_F\times Q_F\times U_F$ is finite.  Suppose on the contrary that $\{(p_k,q_k,u_k)\}_{k\in\mathbb{N}}\subset P_F\times Q_F\times U_F$ is an infinite set of permissible triples with corresponding permissible scalars $\{(r_k,s_k,t_k)\}_{k\in\mathbb{N}}\subset\mathbb{R}_+^3$.  At least one of the sets $\{p_k\}_{k\in\mathbb{N}}, \{q_k\}_{k\in\mathbb{N}}$ or $\{u_k\}_{k\in\mathbb{N}}$ is infinite; by Proposition \ref{permute_permissible_triples}, we may assume that either $\{p_k\}_{k\in\mathbb{R}}$ or $\{p_k^-\}_{k\in\mathbb{N}}$ is infinite.  Since $P$ is distinctive, these sets have the same cardinality and hence are both infinite.  As the set of possible directions of points in each $\mathcal{O}_i$ is compact and there are only finitely many components $\mathcal{O}_i$ of $\mathcal{O}$, we may also assume that each $p_k$ (resp. $p_k^-$) belongs to some small sector of a fixed component of $\mathcal{O}$ and that $\lim_k\text{arg}({\bm\pi}(p_k))=\theta_P$ (resp. $\lim_k\text{arg}({\bm\pi}(p_k^-))=\theta_P^-$) exists.  Rotating each of $P, Q$ and $U$ by $-\theta_P$, assume for simplicity that $\theta_P=0$ (and hence $\theta_P^-=\pi$).

Rescaling each $(r_k,s_k,t_k)$ as necessary (see Proposition \mbox{\ref{scalars_unique})}, we further assume that $|r_kp_k|=1$ for all $k$.  Under these assumptions, we find that $\{r_k{\bm\pi}(p_k)\}_{k\in\mathbb{N}}\subset S^1$ with $r_k{\bm\pi}(p_k)\to 1\in\mathbb{C}$.  Also note that since $P$ is limit-point free, $|p_k|\to\infty$ and hence $r_k=1/|p_k|\to 0$.

Condition (ii) of Definition \ref{permissible_triples_defn} (together with the fact that ${\bm\pi}(\cdot)$ is an isometry on $\pi$-sectors) guarantees that in the plane, we have for each $k\in\mathbb{N}$ both
\begin{equation}\label{ball_int_seq_1}
B(r_k{\bm\pi}(p_k),s_k{\bm\pi}(q_k))\cap \{r_k{\bm\pi}(p_\ell)\}_{\ell\in\mathbb{N}}=\varnothing
\end{equation}
and
\[B(t_k{\bm\pi}(u_k),r_k{\bm\pi}(p_k^-))\cap \{r_k{\bm\pi}(p_\ell^-)\}_{\ell\in\mathbb{N}}=\varnothing.\]
This latter intersection may be rewritten
\begin{equation}\label{ball_int_seq_2}
B(t_k{\bm\pi}(u_k),-r_k{\bm\pi}(p_k))\cap \{-r_k{\bm\pi}(p_\ell)\}_{\ell\in\mathbb{N}}=\varnothing.
\end{equation}

For each $k$, let $c_k:=c(r_k{\bm\pi}(p_k),s_k{\bm\pi}(q_k))$ and $B_k:=B(r_k{\bm\pi}(p_k),s_k{\bm\pi}(q_k))$ be the circumcenter and ball determined by $r_k{\bm\pi}(p_k)$ and $s_k{\bm\pi}(q_k)$.  The circumcenter $c_k$ is the intersection of the perpendicular bisectors of the straight line segments from the origin to $r_k{\bm\pi}(p_k)$ and from the origin to $s_k{\bm\pi}(q_k)$.  Since $\text{arg}({\bm\pi}(p_k))\to 0$, for large $k$ the former perpendicular bisector does not intersect the negative real axis, so for all large $k$ we have $\text{arg}(c_k)\in [0,\pi)\cup(\pi,2\pi)$.  Passing to a subsequence, assume without loss of generality that $\text{arg}(c_k)\in[0,\pi)$ for all $k$.  

\begin{figure}[t]
\begin{tikzpicture}[scale=1.5]
\def\ptheta{10};
\def\s{.8};
\def\qtheta{130};
\def\btheta{\ptheta+58};
\draw[<->,line width=1] (-1.5,0) -- (1.8,0);
\draw[<->,line width=1] (0,-.7) -- (0,2);
\draw (1,-.07) -- (1,.07);
\draw (-1,-.07) -- (-1,.07);
\draw (-.07,1) -- (.07,1);
\filldraw (0,0) circle (1pt);
\filldraw ({cos(\ptheta)},{sin(\ptheta)}) circle (1pt) node[right] {$r_k{\bm\pi}(p_k)$};
\filldraw ({\s*cos(\qtheta)},{\s*sin(\qtheta)}) circle (1pt) node[left] {$s_k{\bm\pi}(q_k)$};
\draw[dashed] (0,0) -- ({cos(\ptheta)},{sin(\ptheta)}) -- ({\s*cos(\qtheta)},{\s*sin(\qtheta)}) -- cycle;
\filldraw ({-sin(\ptheta)*((cos(\ptheta)*cos(\qtheta)+sin(\ptheta)*sin(\qtheta)-\s)/(2*(sin(\ptheta)*cos(\qtheta)-cos(\ptheta)*sin(\qtheta))))+cos(\ptheta)/2},{cos(\ptheta)*((cos(\ptheta)*cos(\qtheta)+sin(\ptheta)*sin(\qtheta)-\s)/(2*(sin(\ptheta)*cos(\qtheta)-cos(\ptheta)*sin(\qtheta))))+sin(\ptheta)/2}) circle (1pt) node[left] {$c_k$};
\draw[dashed] ({-sin(\ptheta)*((cos(\ptheta)*cos(\qtheta)+sin(\ptheta)*sin(\qtheta)-\s)/(2*(sin(\ptheta)*cos(\qtheta)-cos(\ptheta)*sin(\qtheta))))+cos(\ptheta)/2},{cos(\ptheta)*((cos(\ptheta)*cos(\qtheta)+sin(\ptheta)*sin(\qtheta)-\s)/(2*(sin(\ptheta)*cos(\qtheta)-cos(\ptheta)*sin(\qtheta))))+sin(\ptheta)/2}) circle ({sqrt((-sin(\ptheta)*((cos(\ptheta)*cos(\qtheta)+sin(\ptheta)*sin(\qtheta)-\s)/(2*(sin(\ptheta)*cos(\qtheta)-cos(\ptheta)*sin(\qtheta))))+cos(\ptheta)/2)^2+(cos(\ptheta)*((cos(\ptheta)*cos(\qtheta)+sin(\ptheta)*sin(\qtheta)-\s)/(2*(sin(\ptheta)*cos(\qtheta)-cos(\ptheta)*sin(\qtheta))))+sin(\ptheta)/2)^2)});
\draw[dashed] (1/2,-.7) -- (1/2,2);
\filldraw (1/2,{sqrt(sqrt((-sin(\ptheta)*((cos(\ptheta)*cos(\qtheta)+sin(\ptheta)*sin(\qtheta)-\s)/(2*(sin(\ptheta)*cos(\qtheta)-cos(\ptheta)*sin(\qtheta))))+cos(\ptheta)/2)^2+(cos(\ptheta)*((cos(\ptheta)*cos(\qtheta)+sin(\ptheta)*sin(\qtheta)-\s)/(2*(sin(\ptheta)*cos(\qtheta)-cos(\ptheta)*sin(\qtheta))))+sin(\ptheta)/2)^2)^2-(1/2-(-sin(\ptheta)*((cos(\ptheta)*cos(\qtheta)+sin(\ptheta)*sin(\qtheta)-\s)/(2*(sin(\ptheta)*cos(\qtheta)-cos(\ptheta)*sin(\qtheta))))+cos(\ptheta)/2))^2)+cos(\ptheta)*((cos(\ptheta)*cos(\qtheta)+sin(\ptheta)*sin(\qtheta)-\s)/(2*(sin(\ptheta)*cos(\qtheta)-cos(\ptheta)*sin(\qtheta))))+sin(\ptheta)/2}) circle (1pt) node[above right] {$b_k^+$};
\filldraw (1/2,{1/2*tan(\btheta)}) circle (1pt) node[right] {$b$};
\filldraw[color=black!40,opacity=.5] (0,0) -- (1/2,{1/2*tan(\ptheta)}) arc (\ptheta:\btheta:1/2) -- cycle;
\filldraw ({1/3*cos((\ptheta+\btheta)/2)},{1/3*sin((\ptheta+\btheta)/2)}) circle (1pt);
\draw[->] ({1/3*cos((\ptheta+\btheta)/2)-.7},{1/3*sin((\ptheta+\btheta)/2)}) node[left] {$r_k{\bm\pi}(p_N)$} -- ({1/3*cos((\ptheta+\btheta)/2)-.1},{1/3*sin((\ptheta+\btheta)/2)});
\end{tikzpicture}
\caption{Illustration of the proof of Lemma \ref{permissible_triples_finite}.  The vertical dashed line represents $x=1/2$, and the shaded region is the subset $S_k$ of the ball $B_k=B(r_k{\bm\pi}(p_k),s_k{\bm\pi}(q_k))$.}
\label{permissible_triples_finite_fig}
\end{figure}
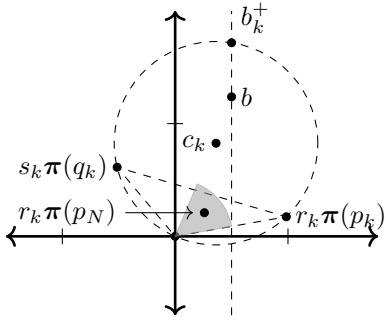

We consider two cases (see Figure \ref{permissible_triples_finite_fig}):
\begin{enumerate}
\item[(i)]  Suppose there is a subsequence for which ${\bm\pi}(p_k)_y\ge0$ for all $k$.  Since $\text{arg}({\bm\pi}(p_k))\to 0$, we may pass to a subsequence to assume $\text{arg}({\bm\pi}(p_{k+1}))\le\text{arg}({\bm\pi}(p_k))$ for all $k$.  For large enough $k$, the $x$-coordinate of $r_k{\bm\pi}(p_k)\in S^1$ is greater than $1/2$.  Since both $0$ and $r_k{\bm\pi}(p_k)$ belong to $C_k:=\partial B_k$, we find that the vertical line $x=1/2$ intersects $C_k$ at two distinct points; let $b_k^+$ denote the point of intersection on the upper-semicircle of $C_k$.  Note that $b_y:=\text{inf}_k\{(b_k^+)_y\}>0$; otherwise $\text{arg}(c_k)\in(\pi,2\pi)$ contrary to our assumption.  Set $b:=(1/2,b_y)$ and note that for large enough $k$, we have $\text{arg}({\bm\pi}(p_k))<\text{arg}(b)$.  Let 
\[S_k=\{p\in\mathbb{C}\ |\ |p|<1/2,\ \text{arg}(p)\in(\text{arg}({\bm\pi}(p_k)),\text{arg}(b))\}.\]
Note that $S_k$ is contained in the triangle with vertices $0,\ r_k{\bm\pi}(p_k)$ and $b_k^+$, which is contained in $B_k$; hence $S_k\subset B_k$ for each $k$.  Choose some $N\in\mathbb{N}$ large enough that $\text{arg}({\bm\pi}(p_N))\in[0,\text{arg}(b))$.  For each $k>N$, we have $\text{arg}({\bm\pi}(p_k))\le\text{arg}({\bm\pi}(p_N))$ by assumption, so $\text{arg}({\bm\pi}(p_N))\in[\text{arg}({\bm\pi}(p_k)),\text{arg}(b))$.  Taking $k$ large enough, we also have $|r_k{\bm\pi}(p_N)|=|{\bm\pi}(p_N)|/|{\bm\pi}(p_k)|<1/2$.  Hence $r_k{\bm\pi}(p_N)\in S_k\subset B_k=B(r_k{\bm\pi}(p_k),s_k{\bm\pi}(q_k))$, contradicting Equation \ref{ball_int_seq_1}.

\item[(ii)]  Suppose there is no subsequence for which ${\bm\pi}(p_k)_y\ge0$ for all $k$.  Then there is some subsequence for which ${\bm\pi}(p_k)_y<0$---and hence $-{\bm\pi}(p_k)_y>0$---for all $k$.  Notice that $c(t_k{\bm\pi}(u_k),-r_k{\bm\pi}(p_k))=c_k-r_k{\bm\pi}(p_k)$, so the argument of this circumcenter belongs to $(0,\pi)$.  An analogous proof (reflecting about the $x$-axis) to that of case (i) implies that for some fixed $N$ and large enough $k$, $-r_k{\bm\pi}(p_N)\in B(t_k{\bm\pi}(u_k),-r_k{\bm\pi}(p_k))$, which contradicts Equation \ref{ball_int_seq_2}.
\end{enumerate}

We conclude that the set of permissible triples must be finite.   
\end{proof}

\subsection{Marked segments and Voronoi staples}\label{Marked segments and Voronoi staples}

In this subsection we fix a stratum $\mathcal{H}(d_1,\dots,d_\kappa),$ its corresponding canonical surface $\mathcal{O}=\mathcal{O}(d_1,\dots,d_\kappa)$ and a translation surface $(X,\omega)\in\mathcal{H}(d_1,\dots,d_\kappa)$.  Label the singularities of $(X,\omega)$ as $\sigma_1,\dots,\sigma_\kappa$ so that $\sigma_i$ has cone angle $2\pi(d_i+1)$.  In a sufficiently small neighborhood of $\sigma_i$ define generalized polar coordinates in a fashion analogous to that on $\mathcal{O}_i$ (see \S\ref{Canonical surface} and the proof of Lemma 5 of \cite{ESS}).  

Let $s$ be a separatrix emanating from $\sigma_i$ of length $|s|>0$ and corresponding angle $\theta\in[0,2\pi(d_i+1))$, and denote by $\hat{s}$ the point of $\mathcal{O}_i$ with polar coordinates $(|s|,\theta)$.  If $|s|=0$, set $\hat{s}:=0\in\mathcal{O}_i$.  If $s$ is in fact a saddle connection, we call $\hat{s}$ the \textit{marked segment} determined by $s$; note that in this case, $\text{hol}(s)={\bm\pi}(\hat{s})\in\mathbb{C}$.  Denote by $s'$ the identical, but oppositely-oriented saddle connection to $s$.  Let $\mathcal{M}(X,\omega)$ be the set of all pairs $\{s,s'\}$ of oppositely-oriented saddle connections of $(X,\omega)$ and 
\[\mathcal{M}_F(X,\omega):=\bigcup_{\{s,s'\}\in\mathcal{M}(X,\omega)}\{s,s'\}\]
the set of all oriented saddle connections on $(X,\omega)$.  Let $\widehat{\mathcal{M}}(X,\omega)$ denote the set of all \textit{orientation-paired marked segments} $\{\hat{s},\hat{s}'\}$ determined by oppositely oriented saddle connections $\{s,s'\}\in\mathcal{M}(X,\omega)$. 

\begin{prop}\label{marked_segs_distinctive_and_lpf}
The set $\widehat{\mathcal{M}}(X,\omega)$ is a limit-point free subset of the set $\mathbb{P}(\mathcal{O})$ of oppositely projected pairs.  Moreover, if $\hat{s}$ and $\hat{t}$ are marked segments with identical arguments belonging to the same component of $\mathcal{O}$, then in fact $\{\hat{s},\hat{s}'\}=\{\hat{t},\hat{t}'\}$.  In particular, $\widehat{\mathcal{M}}(X,\omega)$ is distinctive.
\end{prop}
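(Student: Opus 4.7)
The plan is to verify three assertions in order: first that each pair $\{\hat{s},\hat{s}'\}$ really lies in $\mathbb{P}(\mathcal{O})$, second that $\widehat{\mathcal{M}}_F(X,\omega)$ has no limit points in $\mathcal{O}$, and third that two marked segments sharing an argument in the same component must come from the same oppositely-oriented pair, from which distinctiveness is immediate. The first assertion is essentially by construction: for a saddle connection $s$ one has $\overline{\hat{s}}=\text{hol}(s)$ of magnitude equal to the positive length $|s|$, and its oppositely-oriented partner satisfies $\text{hol}(s')=-\text{hol}(s)$, so $\overline{\hat{s}'}=-\overline{\hat{s}}\neq 0$ and the pair lies in $\mathbb{P}(\mathcal{O})$.

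For the limit-point-free claim I would argue by contradiction: suppose $\{\hat{s}_k\}\subset\widehat{\mathcal{M}}_F(X,\omega)$ is a sequence of distinct points converging to some $p\in\mathcal{O}$. Then $p$ lies in some $\mathcal{O}_i$, and after discarding finitely many terms all $\hat{s}_k$ lie in $\mathcal{O}_i$ and, by passing to a subsequence, in a fixed $2\pi$-sector $c_j^i$ on which $\text{proj}(\cdot)$ is injective. Continuity of $\text{proj}$ then forces $\text{hol}(s_k)=\overline{\hat{s}_k}\to\overline{p}$ with the $\text{hol}(s_k)$ pairwise distinct, contradicting Proposition 3.1 of \cite{Vorobets}, which says that the set of holonomy vectors of $(X,\omega)$ has no limit points in $\mathbb{C}$. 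The possibility $p=0\in\mathcal{O}_i$ is equally ruled out, since then $|\hat{s}_k|=|\text{hol}(s_k)|\to 0$ again yields $0$ as a limit point of the holonomy set.

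For the moreover statement, suppose $\hat{s}$ and $\hat{t}$ lie in the same component $\mathcal{O}_i$ with equal arguments; the corresponding saddle connections $s$ and $t$ emanate from $\sigma_i$ in a common direction $\theta$. Assuming without loss of generality $|\hat{s}|\le|\hat{t}|$, the two straight-line trajectories based at $\sigma_i$ with direction $\theta$ must coincide on the initial interval of length $|s|$; then the endpoint of $s$ (a singularity) would sit in the interior of $t$ whenever $|s|<|t|$, contradicting the definition of a saddle connection. Hence $|s|=|t|$ and $s=t$, so $\hat{s}=\hat{t}$ and therefore $\{\hat{s},\hat{s}'\}=\{\hat{t},\hat{t}'\}$. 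Distinctiveness is then immediate: if $\{\hat{u},\hat{u}'\}\in\widehat{\mathcal{M}}(X,\omega)$ contains $\hat{s}$, then either $\hat{u}=\hat{s}$ or $\hat{u}'=\hat{s}$, and in either case the moreover clause (applied to the matching pair of marked segments with identical argument and component) identifies $\{\hat{u},\hat{u}'\}$ with $\{\hat{s},\hat{s}'\}$. The main obstacle is the limit-point-free step, where one must carefully transfer convergence in the abstract surface $\mathcal{O}$ to convergence in $\mathbb{C}$ via $\text{proj}$ and separately accommodate potential accumulation at the singular origin of a component.
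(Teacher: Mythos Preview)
Your proof is correct and follows essentially the same approach as the paper's: both verify membership in $\mathbb{P}(\mathcal{O})$ via $\overline{\hat{s}'}=-\overline{\hat{s}}$, both deduce the limit-point-free property by pushing down through $\text{proj}$ to the holonomy set and invoking Vorobets, and both prove the ``moreover'' clause by noting that two saddle connections leaving $\sigma_i$ in the same direction with different lengths would force a singularity in the interior of the longer one. Your limit-point argument is slightly more explicit than the paper's---you pass to a subsequence in a fixed $2\pi$-sector and treat accumulation at the origin separately, whereas the paper simply observes that $\text{proj}$ is a local homeomorphism away from the singularities---but the content is the same.
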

\begin{proof}
Suppose $\{\hat{s},\hat{s}'\}\in\widehat{\mathcal{M}}(X,\omega)$.  Since the saddle connections $s$ and $s'$ are identical but oppositely-oriented, it is clear that ${\bm\pi}(\hat{s}')=-{\bm\pi}(\hat{s})$, so $\{\hat{s},\hat{s}'\}\in\mathbb{P}(\mathcal{O})$ and $\widehat{\mathcal{M}}(X,\omega)\subset\mathbb{P}(\mathcal{O})$.  The image of $(\widehat{\mathcal{M}}(X,\omega))_F$ under ${\bm\pi}(\cdot)$ equals the image of $\mathcal{M}_F(X,\omega)$ under $\text{hol}(\cdot)$.  Since the set of holonomy vectors has no limit points (\S\ref{Metric, Lebesgue measure, saddle connections and holonomy vectors}) and ${\bm\pi}(\cdot)$ is a homeomorphism on sufficiently small neighborhoods of regular points, we have that $(\widehat{\mathcal{M}}(X,\omega))_F$ has no limit points and hence $\widehat{\mathcal{M}}(X,\omega)$ is limit-point free.

Now suppose $\hat{s}$ and $\hat{t}$ are marked segments with identical arguments in the same component $\mathcal{O}_i\subset\mathcal{O}$.  Then the underlying saddle connections $s$ and $t$ both emanate in the same direction from $\sigma_i\in\Sigma$.  If their lengths differ, say $|\hat{s}|<|\hat{t}|$, then $|s|<|t|$. This implies that $t$ has a singularity in its interior, which is impossible.  If $|\hat{s}|=|\hat{t}|$, then in fact $s=t$, and so $\{\hat{s},\hat{s}'\}=\{\hat{t},\hat{t}'\}$.  It follows that $\widehat{\mathcal{M}}(X,\omega)$ is distinctive.
\end{proof}

Let $\widehat{\mathcal{M}}_F(X,\omega):=(\widehat{\mathcal{M}}(X,\omega))_F$ denote the set of all marked segments determined by saddle connections on $(X,\omega)$.  The \textit{star domain} for $\sigma_i\in\Sigma$ is
\[\text{star}_i(X,\omega):=\{\hat{s}\ |\ s\ \text{is a separatrix on}\ (X,\omega)\ \text{emanating from $\sigma_i$}\}\subset\mathcal{O}_i.\]
Note that $\text{star}_i(X,\omega)$ consists of the union of closed rays emanating from $0\in\mathcal{O}_i$ which stop only when meeting a marked segment (and thus almost every such ray is infinite).  The \textit{star domain} for $(X,\omega)$ is
\[\text{star}(X,\omega)=\bigsqcup_{i=1}^\kappa \text{star}_i(X,\omega)\subset\mathcal{O}.\]

Define a map $\eta:\text{star}(X,\omega)\to(X,\omega),$ where for each point $p\in\text{star}_i(X,\omega)$, if $s$ is the separatrix for which $p=\hat{s}$, then $\eta(p)$ is the endpoint of $s$ on $(X,\omega)$.  In other words, if $p\in\text{star}(X,\omega)$ has polar coordinates $(|p|,\theta)$, then $\eta(p)$ is the endpoint of the separatrix of length $|p|$ emanating from $\sigma_i\in\Sigma$ with angle $\theta$.  

For each $x$ in the Voronoi 2-cell $\mathcal{C}_i:=\mathcal{C}_{\sigma_i}$, let $s_x$ be the unique length-minimizing separatrix from $\sigma_i$ to $x$.  Note that $\eta$ is injective---and thus invertible---on the set
\[\{\hat{s}_x\ |\ x\in\mathcal{C}_i\}\subset\text{star}_i(X,\omega):\]
if $\eta(\hat{s}_x)=\eta(\hat{s}_y)$ for $x,y\in\mathcal{C}_i$, then $x=y$ and $s_x=s_y$ by the aforementioned uniqueness of these separatrices.  Hence $\hat{s}_x=\hat{s}_y$.  Let $\iota_i:\mathcal{C}_i\to\{\hat{s}_x\ |\ x\in\mathcal{C}_i\}$ denote the corresponding inverse, namely $x\mapsto\hat{s}_x$ for each $x\in\mathcal{C}_i$, and define $\iota:\bigsqcup_{i=1}^\kappa \mathcal{C}_i\to\mathcal{O}$ by setting $\iota|_{\mathcal{C}_i}=\iota_i$ for each $i$;  see Figure \ref{voronoi_cells_figure}.

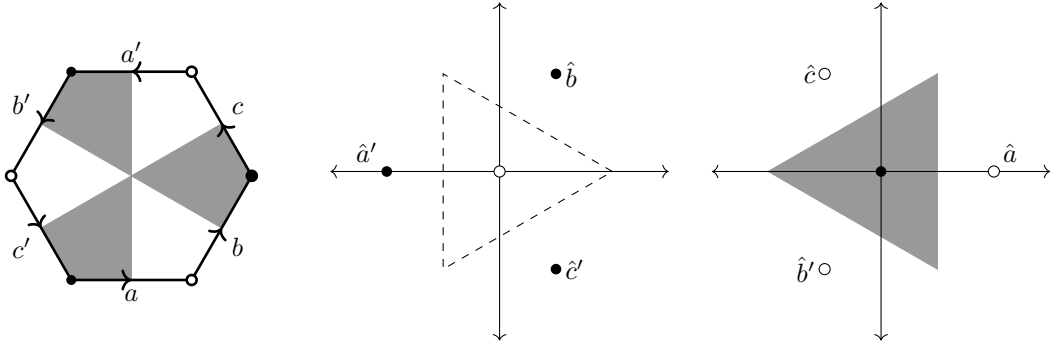
\begin{figure}[t]
\begin{minipage}{.28\textwidth}
\centering
\begin{tikzpicture}
\tikzmath{\scale=.8;}
\filldraw [color=black!40] (\scale*1,0) -- (\scale*1,\scale*1.732) -- (-\scale*.5,\scale*.5*1.732) -- (0,0) -- cycle;
\filldraw [color=black!40] (\scale*1,\scale*2*1.732) -- (\scale*1,\scale*1.732) -- (-\scale*.5,\scale*1.5*1.732) -- (0,\scale*2*1.732) -- cycle;
\filldraw [color=black!40] (\scale*2.5,\scale*.5*1.732) -- (\scale*1,\scale*1.732) -- (\scale*2.5,\scale*1.5*1.732) -- (\scale*3,\scale*1.732) -- cycle;
\draw [->-=.5,line width=1] (-\scale*1,\scale*1.732) -- (0,0) node[midway,below left] {$c'$} node[scale=.4,circle,fill=black] {};
\draw [->-=.5,line width=1] (0,\scale*3.464) -- (-\scale*1,\scale*1.732) node[midway,above left] {$b'$} node[draw,scale=.4,circle,fill=white] {};
\draw [->-=.5,line width=1] (\scale*2,\scale*3.464) -- (\scale*0,\scale*3.464) node[midway,above] {$a'$} node[scale=.4,circle,fill=black] {};
\draw [->-=.5,line width=1] (\scale*3,\scale*1.732) -- (\scale*2,\scale*3.464) node[midway,above right] {$c$} node[draw,scale=.4,circle,fill=white] {};
\draw [->-=.5,line width=1] (\scale*2,0) -- (\scale*3,\scale*1.732) node[midway,below right] {$b$} node[draw,scale=.4,circle,fill=black] {};
\draw [->-=.5,line width=1] (0,0) -- (\scale*2,0) node[midway,below] {$a$} node[draw,scale=.4,circle,fill=white] {};
\end{tikzpicture}
\end{minipage}
\begin{minipage}{.3\textwidth}
\centering
\begin{tikzpicture}[scale=1.5]
\draw[<->] (-1.5,0) -- (1.5,0);
\draw[<->] (0,-1.5) -- (0,1.5);
\draw [dashed] (1,0) -- (-1/2,.866025) -- (-1/2,-.866025) -- cycle;
\filldraw [fill=white] (0,0) circle (1.4pt);
\filldraw [fill=black] (1/2,.866025) circle (1.2pt) node[right] {$\hat{b}$};
\filldraw [fill=black] (-1,0) circle (1.2pt) node[above left] {$\hat{a}'$};
\filldraw [fill=black] (1/2,-.866025) circle (1.2pt) node[right] {$\hat{c}'$};
\end{tikzpicture}
\end{minipage}
\begin{minipage}{.3\textwidth}
\centering
\begin{tikzpicture}[scale=1.5]
\filldraw [color=black!40] (-1,0) -- (1/2,.866025) -- (1/2,-.866025) -- cycle;
\draw[<->] (-1.5,0) -- (1.5,0);
\draw[<->] (0,-1.5) -- (0,1.5);
\filldraw [fill=black] (0,0) circle (1.2pt);
\filldraw [fill=white] (-1/2,.866025) circle (1.4pt) node[left] {$\hat{c}$};
\filldraw [fill=white] (1,0) circle (1.4pt) node[above right] {$\hat{a}$};
\filldraw [fill=white] (-1/2,-.866025) circle (1.4pt) node[left] {$\hat{b}'$};
\end{tikzpicture}
\end{minipage}
\caption{Left: Voronoi decomposition of a translation surface subordinate to two removable singularities, $\sigma_1$ (white) and $\sigma_2$ (black).  The 2-cell $\mathcal{C}_1:=\mathcal{C}_{\sigma_1}$ (resp. $\mathcal{C}_2:=\mathcal{C}_{\sigma_2}$) is the open region in white (resp. gray).  Middle: The image $\iota(\mathcal{C}_1)$ in $\mathcal{O}_1$, along with three marked segments.  Right: The image $\iota(\mathcal{C}_2)$ in $\mathcal{O}_2$, along with three marked segments.}
\label{voronoi_cells_figure}
\end{figure}

The translation surface $(X,\omega)$ is isometric to the quotient space of $\bigsqcup_{i=1}^\kappa\overline{\mathcal{C}_i}$ under the equivalence relation defined by identifying shared edges of Voronoi 2-cells.  Proposition 7 of \cite{ESS} shows that in a similar fashion, $(X,\omega)$ may be recovered from the closure of the image under $\iota$ of its Voronoi 2-cells by identifying appropriate edges of the various $\overline{\iota(\mathcal{C}_i)}$.  We provide a brief overview of the method by which these edge identifications are made.  Recall that the half-space $H(p)$ determined by $p\in\mathcal{O}_i$ is convex, and thus so is any intersection of such half-spaces (see Definition \mbox{\ref{half-space}}).

\begin{defn}\label{convex_body}
For $S$ a subset of $\mathcal{O}$ with no limit points, the $\textit{convex body}$ of $\mathcal{O}_i$ subordinate to $S$ is defined by
\[\Omega_i(S):=\bigcap_{p\in S\cap\mathcal{O}_i}H(p).\]
The set of \textit{essential points} of $\Omega_i(S)$ is the (unique) minimal subset $\mathcal{E}_i(S)\subset S$ for which 
\[\Omega_i(S)=\bigcap_{p\in \mathcal{E}_i(S)}H(p).\]
When $S=\widehat{\mathcal{M}}_F(X,\omega)$ is the set of all marked segments of $(X,\omega)$, we use the suppressed notation $\Omega_i:=\Omega_i(\widehat{\mathcal{M}}_F(X,\omega))$ and $\mathcal{E}_i:=\mathcal{E}_i(\widehat{\mathcal{M}}_F(X,\omega))$.  
Call distinct points $p,q\in S$ \textit{adjacent} within $S$ if $p,q\neq 0$ belong to the same component $\mathcal{O}_i$, and either of the two open sectors centered at $0$ between $p$ and $q$ contains no points of $S$.
\end{defn}

Proposition 14 of \cite{ESS} shows that the convex body $\Omega_i$ is precisely the set $\overline{\iota(\mathcal{C}_i)}$.  Furthermore, Proposition 14 and Definition 15 of \cite{ESS} show that there is a subset $\widehat{\mathcal{S}}(X,\omega)$ of $\widehat{\mathcal{M}}(X,\omega)$ for which the union of essential points $\mathcal{E}:=\sqcup_{i=1}^\kappa\mathcal{E}_i$ equals $\widehat{\mathcal{S}}_F(X,\omega):=(\widehat{\mathcal{S}}(X,\omega))_F$, i.e. the essential points come equipped with a natural pairing.  Elements $\{\hat{s},\hat{s}'\}$ of $\widehat{\mathcal{S}}(X,\omega)$ are called \textit{marked Voronoi staples} and their underlying pairs of saddle connections $\{s,s'\}$ in $\mathcal{M}(X,\omega)$ are called \textit{Voronoi staples}.  It follows from the definition of $\mathcal{E}_i$ that each edge on the boundary of $\Omega_i=\overline{\iota(\mathcal{C}_i)}$ belongs to the boundary of a half-space $H(\hat{s})$ for some $\hat{s}\in\mathcal{E}_i\subset\widehat{\mathcal{S}}_F(X,\omega)$; conversely, the boundary of the half-space determined by each $\hat{s}\in\widehat{\mathcal{S}}_F(X,\omega)$ contains an edge on the boundary of some $\Omega_i=\overline{\iota(\mathcal{C}_i)}$.  Propositions 7 and 14 of \cite{ESS} show that the edges of the convex bodies of $\sqcup_{i=1}^\kappa \Omega_i$ corresponding to $\hat{s}$ and its orientation-paired $\hat{s}'$ are equal length, and that $(X,\omega)$ is isometric to the quotient space of $\sqcup_{i=1}^\kappa \Omega_i$ under the equivalence relation given by identifying these edges via translation.

Of import for our purposes is the observation that a translation surface is uniquely determined by its marked Voronoi staples $\widehat{\mathcal{S}}(X,\omega)$, and that $\widehat{\mathcal{S}}_F(X,\omega)$ is precisely the union of essential points $\mathcal{E}=\sqcup_{i=1}^\kappa\mathcal{E}_i$.

\begin{remark}
Under this identification of edges, the vertices of the various $\Omega_i$ are regarded as regular points on the resulting translation surface (they correspond to the Voronoi 0-cells of $(X,\omega)$).  The origins $0\in\Omega_i\subset\mathcal{O}_i$ become singularities of the resulting translation surface; we require this also if $\mathcal{O}_i=(\mathbb{C},dz)$, in which case the singularity is a marked point.
\end{remark}

\begin{eg}
In Figure \ref{voronoi_cells_figure}, the marked Voronoi staples are $\{\hat{a},\hat{a}'\},\ \{\hat{b},\hat{b}'\}$ and $\{\hat{c},\hat{c}'\}$.  The hexagonal translation surface on the left is recovered by identifying the edges of the convex bodies corresponding to these respective pairs.  
\end{eg}

We conclude this subsection a brief observation:

\begin{prop}\label{adjacent_vs_in_same_pi_sector}
Any two adjacent elements $\hat{s}$ and $\hat{t}$ of $\widehat{\mathcal{S}}_F(X,\omega)$ belong to the same open $\pi$-sector.
\end{prop}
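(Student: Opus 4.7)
My proposed approach is a contradiction argument leveraging boundedness of the convex body $\Omega_i=\overline{\iota(\mathcal{C}_i)}$. Since $\iota$ is a local isometry sending $\sigma_i\in X$ to $0\in\mathcal{O}_i$, every $\iota(x)\in\Omega_i$ satisfies $|\iota(x)|=d(\sigma_i,x)\le\operatorname{diam}(X)<\infty$, so $\Omega_i$ is bounded in $\mathcal{O}_i$.

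Suppose $\hat{s},\hat{t}\in\widehat{\mathcal{S}}_F(X,\omega)=\mathcal{E}$ are adjacent in $\mathcal{O}_i$, and suppose for contradiction that they do not share a common open $\pi$-sector. Then both open sectors at $0$ between them have angular width at least $\pi$ (otherwise the smaller would be contained in an open $\pi$-sector holding both), and in particular the one that is empty of other essential points has width $\alpha\ge\pi$. I would let $\vec{v}$ be the direction at the angular midpoint of this empty sector---so that the cone-geodesic angular distance from $\vec{v}$ to each of $\hat{s},\hat{t}$ equals $\alpha/2\ge\pi/2$---and consider the ray $R\subset\mathcal{O}_i$ emanating from $0$ in direction $\vec{v}$.

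The plan is to show $R\subset H(p)$ for every $p\in\mathcal{E}_i$, whence $R\subset\bigcap_{p\in\mathcal{E}_i}H(p)=\Omega_i$, contradicting boundedness of $\Omega_i$. Every $p\in\mathcal{E}_i$ either equals $\hat{s}$, $\hat{t}$, or lies in the non-empty sector, so in each case its angular distance $\gamma$ from $\vec{v}$ (measured on the circle of cone angle $2\pi(d_i+1)$ at $0$) satisfies $\gamma\ge\alpha/2\ge\pi/2$. Using the standard distance formula on the flat cone $\mathcal{O}_i$---namely $d(p,q)=\sqrt{|p|^2+|q|^2-2|p||q|\cos\gamma}$ when $\gamma\le\pi$ (straight-line geodesic) and $d(p,q)=|p|+|q|$ when $\gamma>\pi$ (geodesic forced through the apex)---for any $q\in R$ I obtain $d(p,q)\ge|q|=d(0,q)$: the case $\pi/2\le\gamma\le\pi$ uses $\cos\gamma\le 0$ to give $d(p,q)^2\ge|p|^2+|q|^2\ge|q|^2$, while $\gamma>\pi$ yields $d(p,q)=|p|+|q|>|q|$ immediately. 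Hence $q\in H(p)$, as required.

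The main subtlety is justifying the cone distance formula when $p$ and points of $R$ lie in distinct $\pi$-sectors of $\mathcal{O}_i$: one must rule out shorter paths arising from alternate unfoldings of the cone. This is the classical geometry of flat cones and is compatible with the isometric behavior of $\operatorname{proj}$ on $\pi$-sectors employed via Lemma 4 of \cite{ESS}; I would either cite this directly or spell out the unfolding case-by-case. Once established, the contradiction forces the empty sector to have width $<\pi$, so $\hat{s}$ and $\hat{t}$ lie in a common open $\pi$-sector.
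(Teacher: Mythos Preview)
Your argument is correct and follows essentially the same strategy as the paper's proof: both derive a contradiction from the unboundedness (equivalently, infinite area) of the convex body $\Omega_i=\overline{\iota(\mathcal{C}_i)}$, which is impossible since $(X,\omega)$ is compact. The paper is simply terser---it observes that if $\hat{s}$ and $\hat{t}$ fail to share an open $\pi$-sector then $\partial H(\hat{s})\cap\partial H(\hat{t})=\varnothing$, whence (using adjacency) $\Omega_i$ has infinite area---whereas you spell out the cone geometry explicitly by exhibiting an unbounded ray in $\bigcap_{p\in\mathcal{E}_i}H(p)$ via the flat-cone distance formula; your added care in checking $R\subset H(p)$ for \emph{all} essential $p$ (not just $\hat{s},\hat{t}$) makes the role of adjacency transparent.
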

\begin{proof}
If not, then $\partial H(\hat{s})\cap\partial H(\hat{t})=\varnothing$, and the corresponding convex body has infinite area.  This would imply that the closed translation surface $(X,\omega)$ has infinite area, which is a contradiction.  
\end{proof}

\section{Fanning groups, simulations and finiteness of lattices in strata}\label{Fanning groups, simulations and finiteness of lattices in strata}

\subsection{Fanning groups and lattices}\label{Fanning groups and lattices}

In this subsection we consider a set of directions $\Theta_\Gamma\subset S^1$ announced by a discrete subset $\Gamma\subset\mathrm{SL}_2\mathbb{R}$ and give an equivalent definition of lattice groups in terms of this set of directions (Lemma \ref{lattice_and_fanning}).  Abusing notation, we write $\Gamma$ for both a discrete subset of $\mathrm{SL}_2\mathbb{R}$ whose elements act as linear transformations on $\mathbb{R}^2$ and for its image as a discrete subset of $\text{PSL}_2\mathbb{R}$ whose elements act as M\"obius transformations on the closure of the upper-half plane $\mathbb{H}$.  Similarly, we denote elements of $\Gamma$ in both of these settings by the same notation $A\in\Gamma$.  When there is risk of confusion, we shall explicitly mention the setting in which to consider $\Gamma$.

\begin{defn}\label{fanning_def}
Let $\Gamma$ be a discrete subset of $\mathrm{SL}_2\mathbb{R}$ and $\Delta$ the open unit disk.  Set
\[\Theta_\Gamma:=S^1\backslash\bigcup_{A\in\Gamma} A\cdot\Delta,\]
where $A\cdot\Delta$ denotes the usual action of a matrix on $\mathbb{R}^2$ as a linear transformation.  A group $\Gamma$ for which $\Theta_\Gamma$ is finite is called a \textit{fanning group}.  We call $(X,\omega)$ a \textit{fanning surface} if its Veech group $\mathrm{SL}(X,\omega)$ is fanning. 
\end{defn}

As we shall see in Corollary \ref{directions_for_orbits} below, $\Theta_\Gamma$ contains the directions of certain short marked segments of any translation surface with Veech group $\Gamma$.  These directions will be instrumental in our construction of such surfaces.  

\begin{eg}\label{Gamma_0_eg}
Let $S:=(\begin{smallmatrix}0 & -1\\ 1 & 0\end{smallmatrix}),\ T:=(\begin{smallmatrix}1 & 1\\ 0 & 1\end{smallmatrix})$, and set $\Gamma:=\langle S,T^2\rangle$.  Figure \ref{fanning_group_Gamma_0} shows that $\Theta_{\Gamma}$ is contained in the set of eighth roots of unity and thus $\Gamma$ a fanning group.  One verifies that in fact $\Theta_{\Gamma}$ equals this set.
\end{eg}

\begin{figure}[t]
\centering
\begin{tikzpicture}[scale=1.6]
\draw[fill=black!30,fill opacity=.5,rotate around={22.5:(0,0)},dashed] (0,0) ellipse ({sqrt(2*sqrt(2)+3)} and {sqrt(3-2*sqrt(2))});
\draw[fill=black!30,fill opacity=.5,rotate around={-22.5:(0,0)},dashed] (0,0) ellipse ({sqrt(2*sqrt(2)+3)} and {sqrt(3-2*sqrt(2))});
\draw[fill=black!30,fill opacity=.5,rotate around={112.5:(0,0)},dashed] (0,0) ellipse ({sqrt(2*sqrt(2)+3)} and {sqrt(3-2*sqrt(2))});
\draw[fill=black!30,fill opacity=.5,rotate around={67.5:(0,0)},dashed] (0,0) ellipse ({sqrt(2*sqrt(2)+3)} and {sqrt(3-2*sqrt(2))});
\draw[<->,line width=1] (-2.5,0) -- (2.5,0);
\draw[<->,line width=1] (0,-2.5) -- (0,2.5);
\draw (0,0) circle (1);
\draw (1.3,.5) circle (0pt) node {\tiny $T^2\cdot\Delta$};
\draw (1.3,-.5) circle (0pt) node {\tiny $T^{-2}\cdot\Delta$};
\draw (-.45,1.2) circle (0pt) node {\tiny $ST^{2}\cdot\Delta$};
\draw (.45,1.2) circle (0pt) node {\tiny $ST^{-2}\cdot\Delta$};
\filldraw (1,0) circle (1pt);
\filldraw ({sqrt(2)/2},{sqrt(2)/2}) circle (1pt);
\filldraw (0,1) circle (1pt);
\filldraw ({-sqrt(2)/2},{sqrt(2)/2}) circle (1pt);
\filldraw (-1,0) circle (1pt);
\filldraw ({-sqrt(2)/2},{-sqrt(2)/2}) circle (1pt);
\filldraw (0,-1) circle (1pt);
\filldraw ({sqrt(2)/2},{-sqrt(2)/2}) circle (1pt);
\end{tikzpicture}
\caption{The set of eighth roots of unity contains $\Theta_{\Gamma}$ with $\Gamma=\langle S,T^2\rangle$ and $S$ and $T$ defined as in Example \ref{Gamma_0_eg}.}
\label{fanning_group_Gamma_0}
\end{figure}
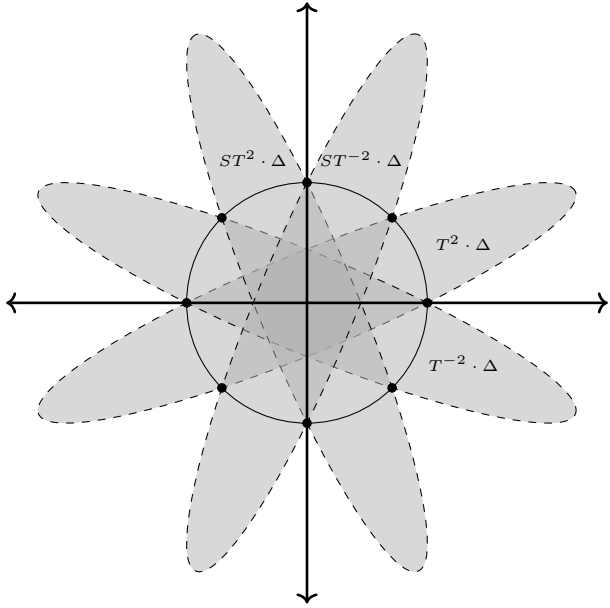

We introduce notation to help us relate lattices to fanning groups.  For $A\in\text{PSL}_2\mathbb{R}$, set
\[H(A):=\{z\in\mathbb{H}\ |\ d_\mathbb{H}(i,z)\le d_\mathbb{H}(Ai,z)\},\]
where $d_\mathbb{H}$ denotes the hyperbolic metric on $\mathbb{H}$, and for a discrete subset $\Gamma\subset\mathrm{PSL}_2\mathbb{R}$, define
\[D(\Gamma):=\bigcap_{A\in\Gamma} H(A)\]
(compare with Definitions \ref{half-space} and \ref{convex_body}).  Recall that if $\Gamma$ is a Fuchsian group which trivially stabilizes $i\in\mathbb{H}$---or, equivalently, $\Gamma\cap\text{PSO}_2\mathbb{R}=\{\text{Id}\}$---then $D(\Gamma)$ is the Dirichlet region centered at $i$ for $\Gamma$.  In this case $D(\Gamma)$ is a convex fundamental polygon for $\Gamma$; see, say, \cite{Beardon} and \cite{Katok}.  For any subset $S\subset\mathbb{H}$, let $\overline{S}$ and $\partial S$ denote the closure and boundary, respectively, of $S$ in $\hat{\mathbb{C}}=\mathbb{C}\cup\{\infty\}$.  Define
\[S(A):=\{x/y\ |\ (x,y)^T\in S^1\backslash A\cdot\Delta\}\]
and 
\[S_\Gamma:=\bigcap_{A\in\Gamma}S(A)=\{x/y\ |\ (x,y)^T\in\Theta_\Gamma\},\]
so that $S(A)$ and $S_\Gamma$ belong to $\partial\mathbb{H}=\mathbb{R}\cup\{\infty\}$.  The following proposition states that the reciprocals of the slopes of directions in $\Theta_\Gamma\subset S^1$ coincide with the points of $\overline{D(\Gamma)}$ in $\partial\mathbb{H}$.

\begin{prop}\label{S_Gamma}
For any discrete $\Gamma\subset\mathrm{SL}_2\mathbb{R}$,
\[S_\Gamma=\overline{D(\Gamma)}\cap\partial\mathbb{H}.\]
\end{prop} 
\begin{proof}
We first show that for any $A=(\begin{smallmatrix}a & b\\ c & d\end{smallmatrix})\in\mathrm{SL}_2\mathbb{R}$,
\[S(A)=\overline{H(A)}\cap\partial\mathbb{H}.\]
If $A\in\text{SO}_2\mathbb{R}$, then both sets are all of $\partial\mathbb{H}$, so suppose $A\notin\text{SO}_2\mathbb{R}$.  Assume that $A$ satisfies $Ai=i+2t_0$ for some $t_0\in\mathbb{R}_+$.  Then $H(A)$ consists of all points $\tau\in\mathbb{H}$ with $\text{Re}(\tau)\le t_0$, and 
\[\overline{H(A)}\cap\partial\mathbb{H}=\{t\in\mathbb{R}\ |\ t\le t_0\}\cup\{\infty\}.\]
From $(ai+b)/(ci+d)=i+2t_0$ we find that 
\[a=2ct_0+d\ \ \ \ \ \text{and}\ \ \ \ \ b=2dt_0-c,\]
and, moreover,
\[1=\text{det}(A)=ad-bc=c^2+d^2.\]
Notice that $(x,y)^T\in S^1\backslash A\cdot\Delta$ if and only if $(x,y)^T\in S^1$ and $A^{-1}(x,y)^T\notin\Delta$.  The latter requirement may be rewritten as $|A^{-1}(x,y)^T|\ge |(x,y)^T|$.  We claim that $(1,0)^T$ and $\frac1{\sqrt{t_0^2+1}}(t_0,1)^T$ satisfy this with equality.  We compute
\[|A^{-1}(1,0)^T|^2=\left|\begin{pmatrix}d & -b\\ -c & a\end{pmatrix}\begin{pmatrix}1\\ 0\end{pmatrix}\right|^2=\left|\begin{pmatrix}d\\ -c\end{pmatrix}\right|^2=c^2+d^2=1=|(1,0)^T|^2\]
and
\begin{align*}
|A^{-1}(t_0,1)^T|^2&=\left|\begin{pmatrix}d & -b\\ -c & a\end{pmatrix}\begin{pmatrix}t_0\\ 1\end{pmatrix}\right|^2\\
&=\left|\begin{pmatrix}dt_0-b\\ -ct_0+a\end{pmatrix}\right|^2\\
&=(dt_0-b)^2+(-ct_0+a)^2\\
&=(-dt_0+c)^2+(ct_0+d)^2\\
&=d^2t_0^2-2cdt_0+c^2+c^2t_0^2+2cdt_0+d^2\\
&=t_0^2+1\\
&=|(t_0,1)^T|^2,
\end{align*}
so the claim holds.  Note that $A\cdot\Delta$ is the open region whose boundary is the ellipse $A\cdot S^1$ centered at the origin, so $S^1\cap A\cdot\Delta$ consists of all points in $S^1$ with slope strictly between $0$ and $1/t_0$ (see, for instance, Figure \ref{fanning_group_Gamma_0}, setting $A=T^2$ and $t_0=1$).  It follows that 
\[S(A)=\{t\in\mathbb{R}\ |\ t\leq t_0\}\cup\{\infty\}=\overline{H(A)}\cap\partial\mathbb{H}.\]
Now consider general $A\in\Gamma\backslash\text{SO}_2\mathbb{R}$, and fix $B\in\text{SO}_2\mathbb{R}$ such that $BAi=i+2t_0$ for some $t_0\in\mathbb{R}_+$.  From the argument above, we have
\[S(BA)=\overline{H(BA)}\cap\partial\mathbb{H}.\]
We claim that $S(BA)=BS(A)$ and $H(BA)=BH(A)$.  By definition, $t\in S(BA)$ if and only if there is some $(x,y)^T\in S^1\backslash BA\cdot\Delta$ with $x/y=t$.  The latter inclusion is equivalent to $B^{-1}(x,y)^T\in B^{-1}(S^1\backslash BA\cdot\Delta)=S^1\backslash A\cdot\Delta$ since $B^{-1}\in\text{SO}_2\mathbb{R}$.  Writing $B^{-1}=(\begin{smallmatrix} e & f\\ g & h\end{smallmatrix})$, we find $B^{-1}(x,y)^T=(ex+fy,gx+hy)^T$.  But $B^{-1}t=B^{-1}(x/y)=(ex+fy)/(gx+hy)$ is the reciprocal of the slope of $B^{-1}(x,y)^T\in S^1\backslash A\cdot\Delta$, so $t\in S(BA)$ if and only if $B^{-1}t\in S(A)$, as desired.  For the second claim, note that $\tau\in H(BA)$ if and only if $d_{\mathbb{H}}(i,\tau)\le d_{\mathbb{H}}(BAi,\tau)$.  Since $B^{-1}$ is an isometry fixing $i$, this inequality is equivalent to $d_{\mathbb{H}}(i,B^{-1}\tau)\le d_{\mathbb{H}}(Ai,B^{-1}\tau)$, which is true if and only if $B^{-1}\tau\in H(A)$.

Next we show that
\[\overline{D(\Gamma)}=\bigcap_{A\in\Gamma}\overline{H(A)}.\]
The forward inclusion holds as the closure of an intersection of sets is always contained in the intersection of the closures of the sets.  For the reverse inclusion, suppose $\tau\in\bigcap_{A\in\Gamma}\overline{H(A)}\backslash\{i\}$, and let $\gamma$ be a geodesic segment in $\mathbb{H}$ without endpoints, for which $\overline{\gamma}$ has endpoints $i$ and $\tau$.  Since $\tau\in\overline{H(A)}$ for each $A\in\Gamma$ and each $H(A)$ is convex with $i$ in its interior, we have $\gamma\subset\bigcap_{A\in\Gamma}H(A)=D(\Gamma)$.  Hence $\tau\in\overline{\gamma}\subset\overline{D(\Gamma)}$, so the claim holds.

It follows that 
\begin{align*}
S_\Gamma&=\bigcap_{A\in\Gamma}S(A)=\bigcap_{A\in\Gamma}\left(\overline{H(A)}\cap\partial\mathbb{H}\right)=\left(\bigcap_{A\in\Gamma}\overline{H(A)}\right)\cap\partial\mathbb{H}=\overline{D(\Gamma)}\cap\partial\mathbb{H}.
\end{align*}
\end{proof}

With the aid of Proposition \ref{S_Gamma}, lattices may be characterized as the finitely generated discrete groups $\Gamma$ for which the set of directions $\Theta_\Gamma$ is finite.  Furthermore, for a lattice $\Gamma$, the set of directions $\Theta_\Gamma$ may be computed in finite time:

\begin{lem}\label{lattice_and_fanning}
A discrete subgroup $\Gamma\subset\mathrm{SL}_2\mathbb{R}$ is a lattice if and only if it is a finitely generated fanning group.  
In this case there is some finite subset $\Gamma_n\subset\Gamma$ for which $\Theta_\Gamma=\Theta_{\Gamma_n}$. 
\end{lem}
\begin{proof}
Let $\Lambda$ denote the image in $\text{PSL}_2\mathbb{R}$ of $\Gamma\cap\text{SO}_2\mathbb{R}$.  As $\Lambda$ contains only elliptic elements, it is a finite cyclic group of order, say, $n$ (Corollary 2.4.2 of \cite{Katok}).  Let $A_0\in\Lambda$ be a generator of this group.  Recall that if $n=1$, then $D(\Gamma)$ is the Dirichlet region centered at $i$ for $\Gamma$ and is thus a convex fundamental polygon.  If $n>1$, then $D(\Gamma)$ is no longer a fundamental domain: one finds that $A_0D(\Gamma)=D(\Gamma)$, and the interior of $D(\Gamma)$ contains $n$ points from each orbit $\Gamma\tau,\ \tau\in \mathbb{H}$.  Let $\gamma_0$ denote a geodesic segment beginning at $i$ and ending on $\partial D(\Gamma)$.  For each $0\le j<n$, the geodesic segment $\gamma_j:=A_0^j\gamma_0$ also begins and ends at $i$ and $\partial D(\Gamma)$.  Let $\mathcal{F}_0\subset D(\Gamma)$ denote the convex polygon bounded by $\gamma_0,\gamma_1$ and $\partial D(\Gamma)$.  A slight generalization of the proof of Theorem 3.2.2 of \cite{Katok} shows that $\mathcal{F}_0$ is a convex fundamental polygon for $\Gamma$.  In particular, 
\[\mu_\mathbb{H}(D(\Gamma))=n\mu_\mathbb{H}(\mathcal{F}_0)=n\mu_\mathbb{H}(\Gamma\backslash\mathbb{H}),\]
where $\mu_\mathbb{H}$ denotes hyperbolic area.  Hence $\Gamma$ is a lattice if and only if $\mu_\mathbb{H}(D(\Gamma))$ is finite.  

Suppose $\Gamma$ is a lattice.  Then $\Gamma$ is geometrically finite and hence finitely generated.  If $\Gamma$ is not a fanning group, then $\Theta_\Gamma$---and, consequently, $S_\Gamma$---is infinite.  By Proposition \ref{S_Gamma}, $\overline{D(\Gamma)}\cap\partial\mathbb{H}$ is infinite, and by Gauss-Bonnet, $D(\Gamma)$ has infinite hyperbolic area.  This is a contradiction, so $\Gamma$ is a finitely generated fanning group.

Now assume $\Gamma$ is a finitely generated fanning group.  Again by Proposition \ref{S_Gamma}, this implies that $\overline{D(\Gamma)}\cap\partial\mathbb{H}$ is finite, so $\mathcal{F}_0\subset D(\Gamma)$ has no free sides and at most finitely many cusps.  Since $\Gamma$ is finitely generated, it is geometrically finite and thus every convex fundamental polygon for $\Gamma$ has finitely many sides (Theorem 10.1.2 of \cite{Beardon}).  By Gauss-Bonnet, $\Gamma$ is a lattice, and the first statement is proven.

For the second statement, suppose $\Gamma$ is a lattice.  From the previous paragraph, we know that $\mathcal{F}_0$ has finitely many sides, and thus the same is true of $D(\Gamma)$.  But then there is some finite subset $\Gamma_n\subset\Gamma$ for which $D(\Gamma_n)=D(\Gamma)$, and by Proposition \mbox{\ref{S_Gamma}}, $S_\Gamma=S_{\Gamma_n}$, which implies $\Theta_\Gamma=\Theta_{\Gamma_n}$.
\end{proof}

\subsection{The group $\text{Aff}^+_\mathcal{O}(X,\omega)$ and its action on marked segments}\label{The group Aff^+_O(X,omega) and its action on marked segments}

Let $(X,\omega)\in\mathcal{H}(d_1,\dots,d_\kappa)$ and $\mathcal{O}=\mathcal{O}(d_1,\dots,d_\kappa).$  A central result of \cite{Edwards} (Theorem 18) and \cite{ESS} (Proposition 17) is that a matrix $A\in\mathrm{SL}_2\mathbb{R}$ belongs to $\mathrm{SL}(X,\omega)$ if and only if there is some $f\in\text{Aff}^+(\mathcal{O})$ with $\text{der}(f)=A$ satisfying $f(\widehat{\mathcal{S}}(X,\omega))\subset\widehat{\mathcal{M}}(X,\omega)$.  Theorem 17 of \cite{Edwards} shows that the same statement holds with the latter subset inclusion replaced by the equality $f(\widehat{\mathcal{M}}(X,\omega))=\widehat{\mathcal{M}}(X,\omega)$.  Note, in particular, that these statements require the affine automorphism $f$ to respect the orientation-pairing of marked segments---it is not enough that the set of marked segments $\widehat{\mathcal{M}}_F(X,\omega)$ is invariant under $f$ to conclude that $\text{der}(f)\in\mathrm{SL}(X,\omega)$ (see Example 21 of \cite{ESS}).  We introduce the following notation for the collection of such affine automorphisms of $\mathcal{O}$:

\begin{defn}\label{Aff_O^+(X,omega)}
Let 
\[\text{Aff}_\mathcal{O}^+(X,\omega):=\{f\in\text{Aff}^+(\mathcal{O})\ |\ f(\widehat{\mathcal{M}}(X,\omega))=\widehat{\mathcal{M}}(X,\omega)\}.\]
\end{defn}

Note that $\text{Aff}^+_\mathcal{O}(X,\omega)$ is a subgroup of $\text{Aff}^+(\mathcal{O})$.  From the comments above we see that the image of $\text{Aff}_\mathcal{O}^+(X,\omega)$ under $\text{der}(\cdot)$ is precisely the Veech group $\Gamma=\mathrm{SL}(X,\omega)$ (though $\text{der}(\cdot)$ restricted to $\text{Aff}_\mathcal{O}^+(X,\omega)$ need not be injective, so $\text{Aff}_\mathcal{O}^+(X,\omega)$ and $\mathrm{SL}(X,\omega)$ are in general non-isomorphic).  
The group $\text{Aff}_\mathcal{O}^+(X,\omega)$ acts on $\widehat{\mathcal{M}}(X,\omega)$ via
\begin{align*}
\text{Aff}_\mathcal{O}^+(X,\omega)\times\widehat{\mathcal{M}}(X,\omega)&\to\widehat{\mathcal{M}}(X,\omega)\\
(f,\{\hat{s},\hat{s}'\})&\mapsto \{f(\hat{s}),f(\hat{s}')\}.  
\end{align*}
Note in particular that $f(\hat{s}')=f(\hat{s})'$.  For any $\{\hat{s},\hat{s}'\}\in\widehat{\mathcal{M}}(X,\omega)$, let $[\{\hat{s},\hat{s}'\}]$ denote the orbit of $\{\hat{s},\hat{s}'\}$ under this action.  

Recall from \S\mbox{\ref{Marked segments and Voronoi staples}} that $(X,\omega)$ may be reconstructed from its marked Voronoi staples $\widehat{\mathcal{S}}(X,\omega)\subset \widehat{\mathcal{M}}(X,\omega)$.  The goal of Algorithm \mbox{\ref{the_algorithm}} is to construct---using only $\Gamma=\mathrm{SL}(X,\omega)$ and $d_1\le\dots\le d_\kappa$---increasing subsets of $\text{Aff}_\mathcal{O}^+(X,\omega)$-orbits $[\{\hat{s},\hat{s}'\}]$ whose union eventually contains $\widehat{\mathcal{S}}(X,\omega)$.  To this end, the two main obstacles we face are
\begin{enumerate}
\item[(i)] determining $\text{Aff}_\mathcal{O}^+(X,\omega)$, and

\item[(ii)] determining a representative $\{\hat{s},\hat{s}'\}$ of each of the $\text{Aff}_\mathcal{O}^+(X,\omega)$-orbits $[\{\hat{s},\hat{s}'\}]$ intersecting $\widehat{\mathcal{S}}(X,\omega)$
\end{enumerate}
using only the data $\Gamma$ and $d_1\le\dots\le d_\kappa$.
The first challenge is addressed by the following:  

\begin{lem}\label{aff^+_cosets}
Let $G$ be a set of generators of $\Gamma\le\mathrm{SL}_2\mathbb{R}$.  Then for any $(X,\omega)\in\mathcal{H}(d_1,\dots,d_\kappa)$ with $\mathrm{SL}(X,\omega)=\Gamma$, the group $\text{Aff}_\mathcal{O}^+(X,\omega)$ is generated by a subset of
\[G_\mathcal{O}:=\{\tau\circ f_A\ |\ \tau\in\text{Trans}(\mathcal{O}),\ f_A\in \text{Aff}_C^+(\mathcal{O}),\ A\in G\}.\]
In particular, if $G$ is finite then $\text{Aff}_\mathcal{O}^+(X,\omega)$ is generated by a subset of the finite set $G_\mathcal{O}$.
\end{lem}
\begin{proof}
For each $A\in G$, fix $g_A\in\text{Aff}_\mathcal{O}^+(X,\omega)$ with $\text{der}(g_A)=A$.  Recall that $g_A$ may be written uniquely as $g_A=\tau_A\circ f_A$, where $\tau_A\in\text{Trans}(\mathcal{O})$ and $f_A\in \text{Aff}_C^+(\mathcal{O})$, so the set of such $g_A$ is contained in $G_\mathcal{O}$.  Let $\Lambda$ be the subgroup of $\text{Aff}_\mathcal{O}^+(X,\omega)$ generated by these $g_A$.  Since $\text{Trans}(\mathcal{O})$ is closed under composition, it suffices to show that $\text{Aff}_\mathcal{O}^+(X,\omega)=\cup\ \tau\Lambda$, where the union is over some collection of $\tau\in\text{Trans}(\mathcal{O})$.

Now let $f\in \text{Aff}_\mathcal{O}^+(X,\omega)$ be arbitrary and $A:=\text{der}(f)\in\mathrm{SL}(X,\omega)$.  The matrix $A$ may be written as a product $A=A_1^{\delta_1}\cdots A_m^{\delta_m},\ \delta_j\in\{\pm 1\},$ of elements $A_j$ in the generating set $G$ and their inverses.  Set $g:=g_{A_1}^{\delta_1}\dots g_{A_m}^{\delta_m}\in\Lambda$, with each $g_{A_j}$ a chosen generator of $\Lambda$ as above.  Then $\text{der}(f\circ g^{-1})=\text{Id}$, so $\tau:=f\circ g^{-1}\in\text{Trans}(\mathcal{O})$ and $f\in\tau\Lambda$ as desired.  

The final statement of the Lemma follows immediately from finiteness of both $\text{Trans}(\mathcal{O})$ (Lemma \mbox{\ref{Trans(O)}}) and $G$.  
\end{proof}

For challenge (ii) mentioned above, we wish to determine the generalized polar coordinates of representatives of $\text{Aff}_\mathcal{O}^+(X,\omega)$-orbits.  In the proof of the following Lemma---which is also crucial for Algorithm \mbox{\ref{the_algorithm}}---it is shown that the directions of the shortest representatives $\{\hat{s},\hat{s}'\}$ of each orbit $[\{\hat{s},\hat{s}'\}]$ are announced by the set $\Theta_{\mathrm{SL}(X,\omega)}$.  The more delicate procedure of determining the lengths of such representatives is addressed in \S\mbox{\ref{Finiteness of lattice surfaces with given Veech groups}}.

\begin{lem}\label{finite_orbit_space}
If $(X,\omega)$ is a fanning surface, then the orbit space
\[\widehat{\mathcal{M}}(X,\omega)/\text{Aff}_\mathcal{O}^+(X,\omega)=\left\{[\{\hat{s},\hat{s}'\}]\ \big|\ \{\hat{s},\hat{s}'\}\in\widehat{\mathcal{M}}(X,\omega)\right\}\]
is finite.
\end{lem}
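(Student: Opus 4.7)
The plan is to transport the finiteness problem from $\widehat{\mathcal{M}}(X,\omega)$ down to $\mathbb{R}^2$ via $\text{proj}(\cdot)$, then use the fanning hypothesis to cut down to finitely many directions. By Lemma \ref{proj*f=A*proj}, $\text{proj}$ intertwines the action of $\text{Aff}_\mathcal{O}^+(X,\omega)$ on $\widehat{\mathcal{M}}_F(X,\omega)$ with the linear action of $\Gamma := \text{SL}(X,\omega)$ on $\mathbb{R}^2$. Since $\widehat{\mathcal{M}}(X,\omega)$ is distinctive (Proposition \ref{marked_segs_distinctive_and_lpf}), each pair $\{\pm v\}\subset\mathbb{R}^2\setminus\{0\}$ has at most $2g-2+\kappa$ preimages in $\widehat{\mathcal{M}}(X,\omega)$ (as counted after Definition \ref{P(O)}). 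The induced map on orbit spaces
\[\widehat{\mathcal{M}}(X,\omega)/\text{Aff}_\mathcal{O}^+(X,\omega) \longrightarrow V/\Gamma,\]
where $V := \text{hol}(\mathcal{M}_F(X,\omega))$, then has finite fibers, so it suffices to show $V/\Gamma$ is finite.

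Next I would observe that since $V$ has no limit points (\S\ref{Metric, Lebesgue measure, saddle connections and holonomy vectors}) and is $\Gamma$-invariant, each orbit $\Gamma v$ is discrete and the norm $|\cdot|$ attains a positive minimum at some $w\in\Gamma v$. If $w/|w|\notin\Theta_\Gamma$, then by definition of $\Theta_\Gamma$ there exists $A\in\Gamma$ with $w/|w|\in A\cdot\Delta$, and hence $|A^{-1}w|<|w|$, contradicting minimality. Thus every $\Gamma$-orbit on $V$ has a representative whose direction lies in the finite set $\Theta_\Gamma$.

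The final step is to show, for each $u\in\Theta_\Gamma$, that only finitely many $\Gamma$-orbits have a minimum-norm representative in direction $u$. The proof of Lemma \ref{finite_set_in_S1} shows that each such $u$ is the eigenvector of some parabolic $P\in\Gamma$; the affine automorphism of $(X,\omega)$ with derivative $P$ is then a multi-twist along the cylinders in direction $u$, forcing $(X,\omega)$ to decompose into finitely many cylinders parallel to $u$. All saddle connections with holonomy direction $u$ arise as boundary components of these cylinders, so $V\cap\mathbb{R}_+u$ is finite, and summing over the finitely many $u\in\Theta_\Gamma$ gives the result.

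I expect this last step to be the main obstacle: one must ensure both that every $u\in\Theta_\Gamma$ really is a parabolic direction of $\Gamma$ (which follows from the proof of Lemma \ref{finite_set_in_S1} when $\Gamma$ has sufficiently many parabolics in its closure of directions, as is automatic for the lattice surfaces targeted by Corollary \ref{lattice_are_fanning}) and that the affine action of the associated parabolic pins down a finite cylinder decomposition in direction $u$. Both are standard ingredients in the Thurston--Veech theory of parabolic elements of Veech groups, but the passage from ``parabolic in $\Gamma$'' to ``finitely many saddle connections in that direction'' is the substantive input.
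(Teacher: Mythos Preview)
Your first three steps track the paper's argument closely: pick a minimum-length representative in each orbit and show its direction must land in the finite set $\Theta_\Gamma$. The divergence---and a genuine gap---is in your final step.

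Your claim that ``the proof of Lemma~\ref{finite_set_in_S1} shows that each such $u$ is the eigenvector of some parabolic $P\in\Gamma$'' is not correct. That proof establishes only that if parabolic directions are dense then $\Theta_\Gamma$ is finite; it says nothing about \emph{which} points remain in $\Theta_\Gamma$, and for a general fanning group there is no reason an element of $\Theta_\Gamma$ need be a parabolic direction. For lattice surfaces the Veech dichotomy would rescue you (every saddle-connection direction is parabolic), but the lemma is stated for arbitrary fanning surfaces, so the cylinder-decomposition route does not cover the full hypothesis.

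More to the point, the detour through parabolics and Thurston--Veech is unnecessary. The observation you already used in step~2---that over each point of $\mathbb{R}^2\setminus\{0\}$ the preimage in $\mathcal{O}$ meets $2g-2+\kappa$ rays, and by Proposition~\ref{marked_segs_distinctive_and_lpf} each such ray contains \emph{at most one} marked segment---already bounds the number of marked segments with direction in $\Theta_\Gamma$ by $(2g-2+\kappa)\cdot|\Theta_\Gamma|$. This is exactly how the paper concludes: no parabolics, no cylinder decomposition, just the elementary fact that two saddle connections leaving a singularity in the same direction must coincide. Your step~2 already contained the key ingredient; you only needed to apply it upstairs in $\mathcal{O}$ rather than projecting to $V$ and then reaching for heavier machinery to bound $V\cap\mathbb{R}_+u$.
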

\begin{proof}
Let $[\{\hat{s},\hat{s}'\}]\in \widehat{\mathcal{M}}(X,\omega)/\text{Aff}_\mathcal{O}^+(X,\omega)$.  The image under ${\bm\pi}(\cdot)$ of $[\{\hat{s},\hat{s}'\}]$ consists of pairs $\{\pm v\}$ of holonomy vectors in $\mathbb{C}$, among which there is some pair of minimal length.  The preimage under ${\bm\pi}(\cdot)$ of this pair is a finite set in $\mathcal{O}$ containing a pair of orientation-paired marked segments $\{\hat{s},\hat{s}'\}$ of minimal length in $[\{\hat{s},\hat{s}'\}]$.  
Rescaling $(X,\omega)$ if necessary, we may assume $|\hat{s}|=|\hat{s}'|=1$.  We claim that 
\[\left\{{\bm\pi}(\hat{s}),{\bm\pi}(\hat{s}')\right\}\subset\Theta_{\mathrm{SL}(X,\omega)}.\]
Note for each $A\in\mathrm{SL}(X,\omega)$ that ${\bm\pi}(\hat{s})\notin S^1\cap A\cdot\Delta$: otherwise $A^{-1}\cdot{\bm\pi}(\hat{s})\in\Delta$, but the comments following Definition \ref{Aff_O^+(X,omega)} together with Lemma \ref{pi*f=A*pi} imply $A^{-1}\cdot{\bm\pi}(\hat{s})={\bm\pi}\circ f(\hat{s})$ for some $f\in\text{Aff}^+_\mathcal{O}(X,\omega)$ with $\text{der}(f)=A^{-1}$. Since $1>|{\bm\pi}\circ f(\hat{s})|=|f(\hat{s})|$, this contradicts the assumption that $\{\hat{s},\hat{s}'\}$ is a pair of minimal length in $[\{\hat{s},\hat{s}'\}]$.  The same is true for ${\bm\pi}(\hat{s}')$, so we find that
\[\left\{{\bm\pi}(\hat{s}),{\bm\pi}(\hat{s}')\right\}\subset \bigcap_{A\in\mathrm{SL}(X,\omega)}(S^1\backslash A\cdot\Delta)=S^1\backslash\bigcup_{A\in\mathrm{SL}(X,\omega)} A\cdot\Delta=\Theta_{\mathrm{SL}(X,\omega)}\]
as claimed.  

For each $v\in S^1$, let $r_v$ denote the infinite open ray emanating from $0\in\mathbb{C}$ in the direction of $v$.  Note that the preimage ${\bm\pi}^{-1}(r_v)\subset\mathcal{O}$ consists of $\sum_{i=1}^\kappa (d_i+1)=2g-2+\kappa$ infinite open rays emanating from the origins of the various components of $\mathcal{O}$, and each of these rays contains at most one marked segment of $(X,\omega)$ (Proposition \ref{marked_segs_distinctive_and_lpf}).  From the claim above, the minimal-length representatives of each $[\{\hat{s},\hat{s}'\}]\in\widehat{\mathcal{M}}(X,\omega)/\text{Aff}_\mathcal{O}^+(X,\omega)$ belong to one of the $2g-2+\kappa$ open rays of ${\bm\pi}^{-1}(r_v)$ for some $v\in\Theta_{\mathrm{SL}(X,\omega)}$.  Hence
\[\left|\widehat{\mathcal{M}}(X,\omega)/\text{Aff}_\mathcal{O}^+(X,\omega)\right|\le (2g-2+\kappa)\cdot |\Theta_{\mathrm{SL}(X,\omega)}|,\]
where $|\cdot|$ denotes cardinality.  Since $(X,\omega)$ is a fanning surface, the right-hand side is finite.
\end{proof}

The proof of Lemma \ref{finite_orbit_space} also gives the following:
\begin{cor}\label{directions_for_orbits}
Let $(X,\omega)$ be a fanning surface and $[\{\hat{s},\hat{s}'\}]\in\widehat{\mathcal{M}}(X,\omega)/\text{Aff}_\mathcal{O}^+(X,\omega)$.  If $\{\hat{s},\hat{s}'\}$ are orientation-paired marked segments of minimal length in $[\{\hat{s},\hat{s}'\}]$, then ${\bm\pi}(\hat{s})/|{\bm\pi}(\hat{s})|$ and ${\bm\pi}(\hat{s}')/|{\bm\pi}(\hat{s}')|$ belong to the finite set $\Theta_{\mathrm{SL}(X,\omega)}$.
\end{cor}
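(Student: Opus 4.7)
The plan is to re-examine the proof of Lemma \ref{finite_orbit_space} and observe that the area normalization performed there was not essential---the same contradiction argument goes through without normalization, provided one inserts a factor of $|\hat{s}|$ in the relevant inequality. What the author proved, after rescaling so that $|\hat{s}|=|\hat{s}'|=1$, was exactly that $\overline{\hat{s}},\overline{\hat{s}'}\in\Theta_{\text{SL}(X,\omega)}$; for the unrescaled surface this reads $\overline{\hat{s}}/|\overline{\hat{s}}|,\overline{\hat{s}'}/|\overline{\hat{s}'}|\in\Theta_{\text{SL}(X,\omega)}$.

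Concretely, I would argue by contradiction: suppose $\overline{\hat{s}}/|\overline{\hat{s}}|\in A\cdot\Delta$ for some $A\in\text{SL}(X,\omega)$, i.e.\ $A^{-1}\cdot(\overline{\hat{s}}/|\overline{\hat{s}}|)\in\Delta$. By the characterization of $\text{SL}(X,\omega)$ via $\text{Aff}_\mathcal{O}^+(X,\omega)$ recalled before Definition \ref{Aff_O^+(X,omega)}, there is $f\in\text{Aff}^+_\mathcal{O}(X,\omega)$ with $\text{der}(f)=A^{-1}$, and by Lemma \ref{proj*f=A*proj},
\[
|f(\hat{s})|=|\text{proj}\circ f(\hat{s})|=|A^{-1}\cdot\overline{\hat{s}}|=|\overline{\hat{s}}|\cdot\bigl|A^{-1}\cdot(\overline{\hat{s}}/|\overline{\hat{s}}|)\bigr|<|\overline{\hat{s}}|=|\hat{s}|.
\]
Since $f$ acts on $\widehat{\mathcal{M}}(X,\omega)$ preserving the orientation pairing, the image $\{f(\hat{s}),f(\hat{s}')\}$ lies in $[\{\hat{s},\hat{s}'\}]$ and has strictly smaller length than $\{\hat{s},\hat{s}'\}$, contradicting minimality. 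The identical argument applied to $\hat{s}'$ yields $\overline{\hat{s}'}/|\overline{\hat{s}'}|\in\Theta_{\text{SL}(X,\omega)}$.

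I do not foresee an obstacle here, as the corollary is essentially a scale-invariant restatement of what was already shown inside the proof of Lemma \ref{finite_orbit_space}. The only real task is to replace the normalization step by the explicit factor of $|\overline{\hat{s}}|$ above so that the inequality needed for the contradiction is expressed in terms of the direction vector rather than an assumed unit-length representative.
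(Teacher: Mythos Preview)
Your proposal is correct and is essentially the same as the paper's approach: the paper does not give a separate argument but simply notes that the proof of Lemma \ref{finite_orbit_space} already yields this statement, and your write-up makes explicit precisely the scale-invariant version of that argument.
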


\subsection{Simulating normalized $\text{Aff}^+_\mathcal{O}(X,\omega)$-orbits}\label{Simulating orbits}

The set of orientation-paired marked segments $\widehat{\mathcal{M}}(X,\omega)$ (and the set of $\text{Aff}_\mathcal{O}^+(X,\omega)$-orbits into which it partitions) depends intrinsically on $(X,\omega)$ and its geometry.  The results of the previous subsection suggest that in the case of a fanning surface, much of this information is encoded in the Veech group $\mathrm{SL}(X,\omega)$.  This subsection further explores these ideas by introducing \textit{simulations} of $\text{Aff}_\mathcal{O}^+(X,\omega)$-orbits which are constructed via generators of a fanning group.  We again fix a stratum $\mathcal{H}(d_1,\dots,d_\kappa)$.

\begin{defn}\label{simulations_defn}
Let $\Gamma\le\mathrm{SL}_2\mathbb{R}$ be a fanning group generated by $G\subset\Gamma$ and set
\[\mathfrak{S}_G:=\{(H,\{p,p^-\})\in 2^{G_\mathcal{O}}\backslash \{\varnothing\}\times \mathbb{P}(\mathcal{O})\ |\ {\bm\pi}(p),{\bm\pi}(p^-)\in\Theta_\Gamma\},\]
where $G_\mathcal{O}$ is defined as in Lemma \mbox{\ref{aff^+_cosets}} and $2^{G_\mathcal{O}}$ is the power set of $G_\mathcal{O}$.  For each $\mathfrak{s}:=(H,\{p,p^-\})\in\mathfrak{S}_G$, define the \textit{stage $0$ simulation} determined by $\mathfrak{s}$ by $\text{sim}^0(\mathfrak{s}):=\{\{p,p^-\}\}\subset\mathbb{P}(\mathcal{O})$, and for $n\ge 1$ define the \textit{stage $n$ simulation} determined by $\mathfrak{s}$ recursively as
\[\text{sim}^n(\mathfrak{s}):=\bigcup_{f\in H\cup\{\text{Id}\}} f^{\pm 1}(\text{sim}^{n-1}(\mathfrak{s}))\subset\mathbb{P}(\mathcal{O}).\]
The \textit{simulation} determined by $\mathfrak{s}$ is the union $\text{sim}(\mathfrak{s}):=\bigcup_{n\ge 0}\text{sim}^n(\mathfrak{s})$
of all stage $n$ simulations determined by $\mathfrak{s}$.  Denote by
\[\text{Sims}_G:=\{\text{sim}(\mathfrak{s})\ |\ \text{sim}(\mathfrak{s})\ \text{is distinctive},\ \mathfrak{s}\in\mathfrak{S}_G\}\ \ \ \text{and}\ \ \ \text{Sims}^n_G:=\{\text{sim}^n(\mathfrak{s})\ |\ \text{sim}^n(\mathfrak{s})\ \text{is distinctive},\ \mathfrak{s}\in\mathfrak{S}_G\}\]
the \textit{set of all distinctive simulations} and \textit{set of all stage $n$ distinctive simulations}, respectively, determined by $G$.  For any $r\in\mathbb{R}_+$, the \textit{$r$-scaled (stage $n$)} simulation is $r\text{sim}(\mathfrak{s})$ ($r\text{sim}^n(\mathfrak{s})$).  Finally, set $\text{sim}_F(\mathfrak{s}):=(\text{sim}(\mathfrak{s}))_F\subset\mathcal{O}$ and $\text{sim}^n_F(\mathfrak{s}):=(\text{sim}^n(\mathfrak{s}))_F\subset\mathcal{O}$ (recall Definition \mbox{\ref{P(O)}}).
\end{defn}

Note that by construction, the stage $n$ simulation $\text{sim}^n(\mathfrak{s})$ is the set of images in $\mathcal{O}$ of the oppositely-projected pair $\{p,p^-\}$ under all compositions of at most $n$ elements of $H$ and their inverses, and the simulation $\text{sim}(\mathfrak{s})$ is simply the $\langle H\rangle$-orbit of $\{p,p^-\}$.  Furthermore, the distinctive simulations in $\text{Sims}_G$ and $\text{Sims}^n_G$ are limit-point free since $\Gamma$ is discrete.

Recall $S,\ T$ and $\Gamma$ from Example \ref{Gamma_0_eg}.

\begin{eg}\label{simulations_example}
Let $G:=\{S,T^2\}$, $\mathcal{O}=\mathcal{O}(2)$, and set
\[\mathfrak{s}_0:=(H,\{p_0,p_0^-\})\in\mathfrak{S}_{G},\]
where $H=\{\rho_1^2\circ f_{S}, f_{T^2}\}$, and $p_0,p_0^-\in\mathcal{O}_1=\mathcal{O}$ have unit length (this is required since ${\bm\pi}(p_0),{\bm\pi}(p_0^-)\in\Theta_\Gamma\subset S^1$) and arguments $\text{arg}(p_0)=0$ and $\text{arg}(p_0^-)=5\pi$.  The map $\rho_1^2\circ f_{S}$ acts as a counterclockwise rotation of $\mathcal{O}$ by an angle of $\pi/2+4\pi=9\pi/2$, and $f_{T^2}$ is a bijective horizontal shear on each $c_j^1$.  A subset of the simulation $\text{sim}(\mathfrak{s}_0)$ is shown in cyan in Figure \mbox{\ref{simulations_figure}}.  In red and yellow are subsets of $\text{sim}(\mathfrak{s}_1)$ and $\text{sim}(\mathfrak{s}_2)$, respectively, for
\[\mathfrak{s}_1:=(H,\{p_1,p_1^-\})\in\mathfrak{S}_G\]
and
\[\mathfrak{s}_2:=(H,\{p_2,p_2^-\})\in\mathfrak{S}_G,\]
where $\text{arg}(p_1)=4\pi,\ \text{arg}(p_1^-)=\pi,\ \text{arg}(p_2)=\pi/4$ and $\text{arg}(p_2^-)=13\pi/4$.
\end{eg}

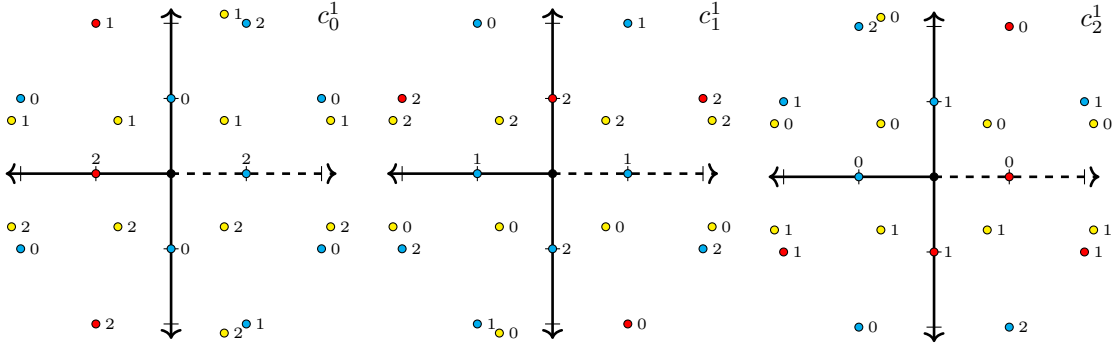
\begin{figure}[t]
\centering
\begin{minipage}{.3\textwidth}
\begin{tikzpicture}
\draw [->,dashed,line width=1] (0,0) -- (2.2,0);
\draw [<-,line width=1] (-2.2,0) -- (0,0);
\draw [<->,line width=1] (0,-2.2) -- (0,2.2);
\draw (2.1,2.1) circle (0pt) node {$c_0^1$};
\draw (1,-.1) -- (1,.1);
\draw (2,-.1) -- (2,.1);
\draw (-1,-.1) -- (-1,.1);
\draw (-2,-.1) -- (-2,.1);
\draw (-.1,1) -- (.1,1);
\draw (-.1,2) -- (.1,2);
\draw (-.1,-1) -- (.1,-1);
\draw (-.1,-2) -- (.1,-2);
\filldraw (0,0) circle (1.5pt);
\filldraw[fill=cyan] (1,0) circle (1.5pt) node[above] {\tiny $2$};
\filldraw[fill=cyan] (0,-1) circle (1.5pt) node[right] {\tiny $0$};
\filldraw[fill=cyan] (0,1) circle (1.5pt) node[right] {\tiny $0$};
\filldraw[fill=cyan] (-2,-1) circle (1.5pt) node[right] {\tiny $0$};
\filldraw[fill=cyan] (2,1) circle (1.5pt) node[right] {\tiny $0$};
\filldraw[fill=cyan] (2,-1) circle (1.5pt) node[right] {\tiny $0$};
\filldraw[fill=cyan] (-2,1) circle (1.5pt) node[right] {\tiny $0$};
\filldraw[fill=cyan] (1,2) circle (1.5pt) node[right] {\tiny $2$};
\filldraw[fill=cyan] (1,-2) circle (1.5pt) node[right] {\tiny $1$};
\filldraw[fill=yellow] ({sqrt(2)/2},{sqrt(2)/2}) circle (1.5pt) node[right] {\tiny $1$};
\filldraw[fill=yellow] ({-sqrt(2)/2},{sqrt(2)/2}) circle (1.5pt) node[right] {\tiny $1$};
\filldraw[fill=yellow] ({-sqrt(2)/2},{-sqrt(2)/2}) circle (1.5pt) node[right] {\tiny $2$};
\filldraw[fill=yellow] ({sqrt(2)/2},{-sqrt(2)/2}) circle (1.5pt) node[right] {\tiny $2$};
\filldraw[fill=yellow] ({3*sqrt(2)/2},{sqrt(2)/2}) circle (1.5pt) node[right] {\tiny $1$};
\filldraw[fill=yellow] ({3*sqrt(2)/2},{-sqrt(2)/2}) circle (1.5pt) node[right] {\tiny $2$};
\filldraw[fill=yellow] ({sqrt(2)/2},{-3*sqrt(2)/2}) circle (1.5pt) node[right] {\tiny $2$};
\filldraw[fill=yellow] ({sqrt(2)/2},{3*sqrt(2)/2}) circle (1.5pt) node[right] {\tiny $1$};
\filldraw[fill=yellow] ({-3*sqrt(2)/2},{-sqrt(2)/2}) circle (1.5pt) node[right] {\tiny $2$};
\filldraw[fill=yellow] ({-3*sqrt(2)/2},{sqrt(2)/2}) circle (1.5pt) node[right] {\tiny $1$};
\filldraw[fill=red] (-1,0) circle (1.5pt) node[above] {\tiny $2$};
\filldraw[fill=red] (-1,-2) circle (1.5pt) node[right] {\tiny $2$};
\filldraw[fill=red] (-1,2) circle (1.5pt) node[right] {\tiny $1$};
\end{tikzpicture}
\end{minipage}
\begin{minipage}{.3\textwidth}
\begin{tikzpicture}
\draw [->,dashed,line width=1] (0,0) -- (2.2,0);
\draw [<-,line width=1] (-2.2,0) -- (0,0);
\draw [<->,line width=1] (0,-2.2) -- (0,2.2); 
\draw (2.1,2.1) circle (0pt) node {$c_1^1$};
\draw (1,-.1) -- (1,.1);
\draw (2,-.1) -- (2,.1);
\draw (-1,-.1) -- (-1,.1);
\draw (-2,-.1) -- (-2,.1);
\draw (-.1,1) -- (.1,1);
\draw (-.1,2) -- (.1,2);
\draw (-.1,-1) -- (.1,-1);
\draw (-.1,-2) -- (.1,-2);
\filldraw (0,0) circle (1.5pt);
\filldraw[fill=cyan] (0,-1) circle (1.5pt) node[right] {\tiny $2$};
\filldraw[fill=cyan] (-2,-1) circle (1.5pt) node[right] {\tiny $2$};
\filldraw[fill=cyan] (2,-1) circle (1.5pt) node[right] {\tiny $2$};
\filldraw[fill=cyan] (-1,2) circle (1.5pt) node[right] {\tiny $0$};
\filldraw[fill=cyan] (1,0) circle (1.5pt) node[above] {\tiny $1$};
\filldraw[fill=cyan] (-1,0) circle (1.5pt) node[above] {\tiny $1$};
\filldraw[fill=cyan] (1,2) circle (1.5pt) node[right] {\tiny $1$};
\filldraw[fill=cyan] (-1,-2) circle (1.5pt) node[right] {\tiny $1$};
\filldraw[fill=yellow] ({sqrt(2)/2},{sqrt(2)/2}) circle (1.5pt) node[right] {\tiny $2$};
\filldraw[fill=yellow] ({-sqrt(2)/2},{sqrt(2)/2}) circle (1.5pt) node[right] {\tiny $2$};
\filldraw[fill=yellow] ({-sqrt(2)/2},{-sqrt(2)/2}) circle (1.5pt) node[right] {\tiny $0$};
\filldraw[fill=yellow] ({sqrt(2)/2},{-sqrt(2)/2}) circle (1.5pt) node[right] {\tiny $0$};
\filldraw[fill=yellow] ({3*sqrt(2)/2},{sqrt(2)/2}) circle (1.5pt) node[right] {\tiny $2$};
\filldraw[fill=yellow] ({3*sqrt(2)/2},{-sqrt(2)/2}) circle (1.5pt) node[right] {\tiny $0$};
\filldraw[fill=yellow] ({-3*sqrt(2)/2},{-sqrt(2)/2}) circle (1.5pt) node[right] {\tiny $0$};
\filldraw[fill=yellow] ({-3*sqrt(2)/2},{sqrt(2)/2}) circle (1.5pt) node[right] {\tiny $2$};
\filldraw[fill=yellow] ({-sqrt(2)/2},{-3*sqrt(2)/2}) circle (1.5pt) node[right] {\tiny $0$};
\filldraw[fill=red] (0,1) circle (1.5pt) node[right] {\tiny $2$};
\filldraw[fill=red] (2,1) circle (1.5pt) node[right] {\tiny $2$};
\filldraw[fill=red] (-2,1) circle (1.5pt) node[right] {\tiny $2$};
\filldraw[fill=red] (1,-2) circle (1.5pt) node[right] {\tiny $0$};
\end{tikzpicture}
\end{minipage}
\begin{minipage}{.3\textwidth}
\begin{tikzpicture}
\draw [->,dashed,line width=1] (0,0) -- (2.2,0);
\draw [<-,line width=1] (-2.2,0) -- (0,0);
\draw [<->,line width=1] (0,-2.2) -- (0,2.2);
\draw (2.1,2.1) circle (0pt) node {$c_2^1$};
\draw (1,-.1) -- (1,.1);
\draw (2,-.1) -- (2,.1);
\draw (-1,-.1) -- (-1,.1);
\draw (-2,-.1) -- (-2,.1);
\draw (-.1,1) -- (.1,1);
\draw (-.1,2) -- (.1,2);
\draw (-.1,-1) -- (.1,-1);
\draw (-.1,-2) -- (.1,-2);
\filldraw (0,0) circle (1.5pt);
\filldraw[fill=cyan] (-1,0) circle (1.5pt) node[above] {\tiny $0$};
\filldraw[fill=cyan] (0,1) circle (1.5pt) node[right] {\tiny $1$};
\filldraw[fill=cyan] (2,1) circle (1.5pt) node[right] {\tiny $1$};
\filldraw[fill=cyan] (-2,1) circle (1.5pt) node[right] {\tiny $1$};
\filldraw[fill=cyan] (-1,-2) circle (1.5pt) node[right] {\tiny $0$};
\filldraw[fill=cyan] (1,-2) circle (1.5pt) node[right] {\tiny $2$};
\filldraw[fill=cyan] (-1,2) circle (1.5pt) node[right] {\tiny $2$};
\filldraw[fill=yellow] ({sqrt(2)/2},{sqrt(2)/2}) circle (1.5pt) node[right] {\tiny $0$};
\filldraw[fill=yellow] ({-sqrt(2)/2},{sqrt(2)/2}) circle (1.5pt) node[right] {\tiny $0$};
\filldraw[fill=yellow] ({-sqrt(2)/2},{-sqrt(2)/2}) circle (1.5pt) node[right] {\tiny $1$};
\filldraw[fill=yellow] ({sqrt(2)/2},{-sqrt(2)/2}) circle (1.5pt) node[right] {\tiny $1$};
\filldraw[fill=yellow] ({3*sqrt(2)/2},{sqrt(2)/2}) circle (1.5pt) node[right] {\tiny $0$};
\filldraw[fill=yellow] ({3*sqrt(2)/2},{-sqrt(2)/2}) circle (1.5pt) node[right] {\tiny $1$};
\filldraw[fill=yellow] ({-3*sqrt(2)/2},{-sqrt(2)/2}) circle (1.5pt) node[right] {\tiny $1$};
\filldraw[fill=yellow] ({-3*sqrt(2)/2},{sqrt(2)/2}) circle (1.5pt) node[right] {\tiny $0$};
\filldraw[fill=yellow] ({-sqrt(2)/2},{3*sqrt(2)/2}) circle (1.5pt) node[right] {\tiny $0$};
\filldraw[fill=red] (1,0) circle (1.5pt) node[above] {\tiny $0$};
\filldraw[fill=red] (0,-1) circle (1.5pt) node[right] {\tiny $1$};
\filldraw[fill=red] (-2,-1) circle (1.5pt) node[right] {\tiny $1$};
\filldraw[fill=red] (2,-1) circle (1.5pt) node[right] {\tiny $1$};
\filldraw[fill=red] (1,2) circle (1.5pt) node[right] {\tiny $0$};
\end{tikzpicture}
\end{minipage}
\caption{Three (subsets) of the simulations $\text{sim}(\mathfrak{s}_0),\ \text{sim}(\mathfrak{s}_1),\ \text{sim}(\mathfrak{s}_2)\in\text{Sims}_{G}$ from Example \ref{simulations_example} are shown in cyan, red and yellow, respectively.  A label $j$ next to a point $p$ indicates to which $c_j^1$ the point $p^-$ belongs.}
\label{simulations_figure}
\end{figure}

Observe that Definition \mbox{\ref{simulations_defn}} and Lemmas \mbox{\ref{lattice_and_fanning}} and \mbox{\ref{aff^+_cosets}} immediately imply:

\begin{lem}\label{Sims_G_finite}
If $\Gamma$ is a lattice, then there exists a set of generators $G$ of $\Gamma$ for which the set of simulations $\text{Sims}_G$ is finite.  
\end{lem}

\begin{remark}
Recall from Lemma \mbox{\ref{lattice_and_fanning}} that for a lattice $\Gamma$, the set of directions $\Theta_\Gamma$ may be computed in finite time.  Indeed, letting $\Gamma_n$ be the subset of $\Gamma$ consisting of all words of length no greater than $n$ in the generators $G$ and their inverses, there is some $n\in\mathbb{N}$ for which $\Theta_{\Gamma_n}=\Theta_\Gamma$.  In general, $\Theta_{\Gamma_n}\supset\Theta_\Gamma$, and it may be difficult to determine when equality holds.  Thus in practice, the set $\Theta_\Gamma$ in the definition of $\mathfrak{S}_G$ (Definition \mbox{\ref{simulations_defn}}) is replaced by some finite $\Theta_{\Gamma_n}$ containing $\Theta_\Gamma$.  Such a replacement does not affect the statements of the results in the remainder of the paper, though it could increase the size of the---still finite---set of simulations $\text{Sims}_G$.
\end{remark}

\begin{defn}
Let $r\in\mathbb{R}_+$ be the minimal length of orientation-paired marked segments in $[\{\hat{s},\hat{s}'\}]\in\widehat{\mathcal{M}}(X,\omega)/\text{Aff}_\mathcal{O}^+(X,\omega)$.  For any marked segment $\hat{s}$ belonging to a pair in the orbit $[\{\hat{s},\hat{s}'\}]$, the \textit{normalized marked segment} corresponding to $\hat{s}$ is $n(\hat{s}):=(1/r)\hat{s}$.  Set $n(\hat{s})':=n(\hat{s}')$, and define the \textit{normalized $\text{Aff}^+_\mathcal{O}(X,\omega)$-orbit} corresponding to $[\{\hat{s},\hat{s}'\}]$ as
\[n([\{\hat{s},\hat{s}'\}]):=\big\{\{n(\hat{s}),n(\hat{s})'\}\ \big|\ {\{\hat{s},\hat{s}'\}\in[\{\hat{s},\hat{s}'\}]}\big\}.\]
\end{defn}

Thus the normalized $\text{Aff}^+_\mathcal{O}(X,\omega)$-orbit corresponding to $[\{\hat{s},\hat{s}'\}]$ takes all points of the orbit $[\{\hat{s},\hat{s}'\}]$ and rescales them (by a constant scalar $1/r$) so that the minimal length of a pair in the orbit is one.

\begin{thm}\label{marked_segments_as_scaled_simulations}
If $(X,\omega)$ is a fanning surface, then $\widehat{\mathcal{M}}(X,\omega)$ is a finite union of scaled simulations.  In particular, if $\Gamma\le\mathrm{SL}_2\mathbb{R}$ is a fanning group generated by $G\subset\Gamma$, then
\[\left\{n([\{\hat{s},\hat{s}'\}])\ \Big|\ [\{\hat{s},\hat{s}'\}]\in\widehat{\mathcal{M}}(X,\omega)/\text{Aff}_\mathcal{O}^+(X,\omega)\ \text{for some}\ (X,\omega)\ \text{with}\ \mathrm{SL}(X,\omega)=\Gamma\right\}\subset\text{Sims}_G.\]
\end{thm}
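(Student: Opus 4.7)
The plan is to partition $\widehat{\mathcal{M}}(X,\omega)$ into its $\text{Aff}^+_\mathcal{O}(X,\omega)$-orbits and identify each normalized orbit with a simulation $\text{sim}(\mathfrak{s})\in\text{Sims}_G$. Since Lemma \ref{finite_orbit_space} guarantees that the orbit space is finite, this will immediately present $\widehat{\mathcal{M}}(X,\omega)$ as a finite union of scaled simulations, and the ``In particular'' statement will follow at once. The datum $\mathfrak{s}$ will be built from a minimal-length representative of each orbit, together with the coset data supplied by Lemma \ref{aff^+_cosets}.

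Fix a generating set $G=\{A_n\}\subset\Gamma$ and an orbit $[\{\hat{s},\hat{s}'\}]\in\widehat{\mathcal{M}}(X,\omega)/\text{Aff}^+_\mathcal{O}(X,\omega)$. I choose a minimal-length representative $\{\hat{s},\hat{s}'\}$ of length $r>0$ and set $p:=n(\hat{s})=(1/r)\hat{s}$, $p^\circ:=n(\hat{s}')$, so $|p|=|p^\circ|=1$ and, by Corollary \ref{directions_for_orbits}, the direction $v:=\overline{p}$ lies in $\Theta_\Gamma$; let $c_j^i,c_\ell^k$ be the $2\pi$-sectors containing $p,p^\circ$. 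For each $A_n\in G$, I select a lift $g_{A_n}\in\text{Aff}^+_\mathcal{O}(X,\omega)$ with $\text{der}(g_{A_n})=A_n$ and write it canonically as $g_{A_n}=\tau_n\circ f_{A_n}$ with $\tau_n\in\text{Trans}(\mathcal{O})$. Lemma \ref{aff^+_cosets} then provides a finite set $\{\tau'_m\}_{m=1}^M\subset\text{Trans}(\mathcal{O})$ so that the subgroup $\mathcal{G}$ generated by $\{g_{A_n}\}$ has cosets $\tau'_m\mathcal{G}$ covering $\text{Aff}^+_\mathcal{O}(X,\omega)$. Assemble $\mathfrak{s}:=(v,(c_j^i,c_\ell^k),(\tau_n)_n,\{\tau'_m\}_m)\in\mathfrak{S}_G$.

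The heart of the proof is showing $\text{sim}(\mathfrak{s})=n([\{\hat{s},\hat{s}'\}])$. By construction $\mathcal{G}_\mathfrak{s}=\mathcal{G}$, so $\text{Aff}^+_\mathcal{O}(\mathfrak{s})=\bigcup_m\tau'_m\mathcal{G}=\text{Aff}^+_\mathcal{O}(X,\omega)$, and
\[\text{sim}(\mathfrak{s})=\big\{\{f(p),f(p^\circ)\}:f\in\text{Aff}^+_\mathcal{O}(X,\omega)\big\}.\]
The claim therefore reduces to the scaling identity $f(p)=(1/r)f(\hat{s})$ for all $f\in\text{Aff}^+_\mathcal{O}(X,\omega)$. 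Writing $f=\tau\circ f_A$, Lemma \ref{centralizer_lemma} gives $\tau\circ f_{D(1/r)}=f_{D(1/r)}\circ\tau$; and since $D(1/r)$ is a positive scalar matrix commuting with $A$, both $f_A\circ f_{D(1/r)}$ and $f_{D(1/r)}\circ f_A$ have derivative $(1/r)A$, preserve each component $\mathcal{O}_i$, and satisfy the angle condition (ii) defining canonical affine automorphisms (because $f_{D(1/r)}$ is argument-preserving). Uniqueness of canonical representatives yields $f_A\circ f_{D(1/r)}=f_{D(1/r)}\circ f_A$, hence $f\circ f_{D(1/r)}=f_{D(1/r)}\circ f$, giving the desired identity.

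With $\text{sim}(\mathfrak{s})=n([\{\hat{s},\hat{s}'\}])$ established for each orbit, I conclude
\[\widehat{\mathcal{M}}(X,\omega)=\bigcup_{\text{orbits}}[\{\hat{s},\hat{s}'\}]=\bigcup_{\text{orbits}}r\cdot\text{sim}(\mathfrak{s}),\]
a finite union of scaled simulations, and each normalized orbit lies in $\text{Sims}_G$. The main obstacle is the scaling identity: $f_{D(1/r)}$ belongs to $\text{Aff}^+_{\text{C}}(\mathcal{O})\setminus\text{Trans}(\mathcal{O})$, so Lemma \ref{centralizer_lemma} does not apply directly, and the commutation $f_A\circ f_{D(1/r)}=f_{D(1/r)}\circ f_A$ must be extracted from the defining conditions of canonical affine automorphisms rather than from a general centralizer result.
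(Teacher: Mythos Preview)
Your proof is correct and follows the same approach as the paper's: choose a minimal-length representative of each orbit, use Corollary \ref{directions_for_orbits} to place its direction in $\Theta_\Gamma$, lift the generators $A_n$ to $g_{A_n}=\tau_n\circ f_{A_n}\in\text{Aff}^+_\mathcal{O}(X,\omega)$, invoke Lemma \ref{aff^+_cosets} for the coset data, and assemble $\mathfrak{s}\in\mathfrak{S}_G$ so that $\text{Aff}^+_\mathcal{O}(\mathfrak{s})=\text{Aff}^+_\mathcal{O}(X,\omega)$ and $\text{sim}(\mathfrak{s})=n([\{\hat{s},\hat{s}'\}])$. You in fact supply more detail than the paper by explicitly verifying the scaling identity $f\circ f_{D(1/r)}=f_{D(1/r)}\circ f$ via the uniqueness of canonical affine automorphisms, a step the paper leaves implicit.
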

\begin{proof}
We will prove the second statement; the first follows from this together with Lemma \ref{finite_orbit_space}.  Let $(X,\omega)$ be a translation surface with $\mathrm{SL}(X,\omega)=\Gamma$, and let $G$ be a (finite or infinite) set of generators for $\Gamma$. By Lemma \mbox{\ref{aff^+_cosets}}, there is some subset $H\subset G_\mathcal{O}$ which generates $\text{Aff}^+_\mathcal{O}(X,\omega)$.  Now let $[\{\hat{s},\hat{s}'\}]\in\widehat{\mathcal{M}}(X,\omega)/\text{Aff}_\mathcal{O}^+(X,\omega)$ with $\{\hat{s},\hat{s}'\}$ a pair of minimal length in $[\{\hat{s},\hat{s}'\}]$.  Set $p:=n(\hat{s})$ and $p^-:=n(\hat{s})'$.  By Corollary \mbox{\ref{directions_for_orbits}}, ${\bm\pi}(p)={\bm\pi}(\hat{s})/|{\bm\pi}(\hat{s})|$ and ${\bm\pi}(p^-)={\bm\pi}(\hat{s}')/|{\bm\pi}(\hat{s}')|$ belong to the finite set $\Theta_\Gamma$, so $\mathfrak{s}:=(H,\{p,p^-\})\in\mathfrak{S}_G$.  Furthermore, $\text{sim}(\mathfrak{s})$ is the $\langle H\rangle=\text{Aff}^+_\mathcal{O}(X,\omega)$-orbit of $\{p,p^-\}=\{n(\hat{s}),n(\hat{s})'\}$, and this is precisely the set $n([\{\hat{s},\hat{s}'\}])$.  By Proposition \mbox{\ref{marked_segs_distinctive_and_lpf}}, $[\{\hat{s},\hat{s}'\}]$---and thus also $n([\{\hat{s},\hat{s}'\}])$---is distinctive; hence $n([\{\hat{s},\hat{s}'\}])\in \text{Sims}_G$.
\end{proof}

Theorem \ref{marked_segments_as_scaled_simulations} implies that for any fanning group $\Gamma$, the orientation-paired marked segments $\widehat{\mathcal{M}}(X,\omega)$ of any $(X,\omega)$ with Veech group $\Gamma$ are given by a finite collection of simulations, each of which is appropriately scaled.  Recall that the marked Voronoi staples $\widehat{\mathcal{S}}(X,\omega)$---from which $(X,\omega)$ may be reconstructed---are determined by $\widehat{\mathcal{M}}(X,\omega)$ (\S\ref{Marked segments and Voronoi staples}).  In \S\ref{Finiteness of lattice surfaces with given Veech groups} , we show that for any finite collection of simulations, there are at most finitely many scalars for which the union of scaled simulations agrees with the marked segments of a unit-area translation surface.  Together with Lemma \ref{Sims_G_finite}, this will prove Theorem \ref{finiteness_thm}.

\subsection{Marked Voronoi staples determine permissible triples}\label{Marked Voronoi staples determine permissible triples}

Let $(X,\omega)\in\mathcal{H}(d_1,\dots,d_\kappa)$ and $\mathcal{O}=\mathcal{O}(d_1,\dots,d_\kappa)$.  In this subsection we apply the results of \S\ref{Permissible triples} to orientation-paired marked segments and their $\text{Aff}_\mathcal{O}^+(X,\omega)$-orbits.  In particular, we shall see that adjacent elements of $\widehat{\mathcal{S}}_F(X,\omega)$ naturally determine permissible triples (Proposition \ref{vor_staples_det_perm_triples_prop}).  To this end, we begin with the following:

\begin{lem}\label{ball_contains_no_marked_segs}
For any two adjacent $\hat{s},\hat{t}\in\widehat{\mathcal{S}}_F(X,\omega)$, we have 
\[B(\hat{s},\hat{t})\cap\widehat{\mathcal{M}}_F(X,\omega)=\varnothing.\]
Moreover, $[\hat{s},\hat{t}]\in\widehat{\mathcal{M}}_F(X,\omega)$ with $[\hat{s},\hat{t}]'=[\hat{t},\hat{s}]$.
\end{lem}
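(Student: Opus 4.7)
Part (a) should reduce quickly once one observes that the circumcenter $c:=c(\hat{s},\hat{t})$ lies in $\Omega_i$. By Proposition \ref{adjacent_vs_in_same_pi_sector}, $\hat{s}$ and $\hat{t}$ share an open $\pi$-sector of some $\mathcal{O}_i$, so $c$ is well-defined; the adjacency hypothesis places the edges of $\partial\Omega_i$ supported by $\partial H(\hat{s})$ and $\partial H(\hat{t})$ consecutively in the cyclic boundary order of the convex polygon $\Omega_i$ (no intervening essential point supplies an intervening edge), forcing their common endpoint $\partial H(\hat{s})\cap\partial H(\hat{t})=\{c\}$ into $\Omega_i$. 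If some $\hat{r}\in\widehat{\mathcal{M}}_F(X,\omega)$ were to lie in $B(\hat{s},\hat{t})\subset\mathcal{O}_i$, then $d(\hat{r},c)<|c|=d(0,c)$, whence $c\notin H(\hat{r})\supseteq\Omega_i$, a contradiction.

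For the moreover clause, the central idea is that the chord from $\hat{s}$ to $\hat{t}$ in $\mathcal{O}_i$ maps, via a local-isometric extension of $\eta$, to a saddle connection in $X$. Consider the triangle $\triangle(\hat{s},\hat{t})\subset\mathcal{O}_i$ with vertices $0,\hat{s},\hat{t}$. Being inscribed in $C(\hat{s},\hat{t})$, it satisfies $\triangle(\hat{s},\hat{t})\subset\overline{B(\hat{s},\hat{t})}$ and $\triangle(\hat{s},\hat{t})\setminus\{0,\hat{s},\hat{t}\}\subset B(\hat{s},\hat{t})$, so by part (a) this set meets no marked segments. A short radial-ray argument then yields $\triangle(\hat{s},\hat{t})\setminus\{0\}\subset\text{star}_i(X,\omega)$: any marked segment on the radial ray from $0$ to $p\in\triangle(\hat{s},\hat{t})$ at distance less than $|p|$ would lie in $\triangle(\hat{s},\hat{t})\cap\widehat{\mathcal{M}}_F(X,\omega)\subset\{\hat{s},\hat{t}\}$, which is impossible unless $p$ itself lies on $[0,\hat{s}]$ or $[0,\hat{t}]$. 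Moreover, for $p\in\triangle(\hat{s},\hat{t})\setminus\{0,\hat{s},\hat{t}\}$, the image $\eta(p)$ is a regular point of $(X,\omega)$---if not, $p$ would be a marked segment violating part (a)---so $\eta$ restricts to a local isometry on this punctured triangle.

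Applied to the chord from $\hat{s}$ to $\hat{t}$ (whose open interior is in $B(\hat{s},\hat{t})$), continuity and the local isometry together send it to a straight geodesic in $X$ from $\sigma_j:=\eta(\hat{s})$ to $\sigma_k:=\eta(\hat{t})$ with singularity-free interior: a saddle connection $u$ of holonomy $\text{hol}(u)=\overline{\hat{t}}-\overline{\hat{s}}=\overline{[\hat{s},\hat{t}]}$. Let $\hat{u}\in\mathcal{O}_j$ denote the marked segment of $u$ issuing from $\sigma_j$; by construction, $\hat{u}$ and $[\hat{s},\hat{t}]$ lie in the same component $\mathcal{O}_j$ and agree under $\text{proj}$. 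To identify them, I would track angles through the local-isometric extension of $\eta$ across $\hat{s}$ into the cone at $\sigma_j$: the angle at the vertex $\hat{s}$ of $\triangle(\hat{s},\hat{t})$ lies in $(0,\pi)$ and equals, in the cone at $\sigma_j$, the angle from $s'$ to $u$. This places $\hat{u}$ in the same open $\pi$-sector of $\mathcal{O}_j$ as $\hat{s}'$, matching the $\pi$-sector prescribed for $[\hat{s},\hat{t}]$ by Definition \ref{[p,q]}(i). Injectivity of $\text{proj}$ on $2\pi$-sectors then gives $\hat{u}=[\hat{s},\hat{t}]\in\widehat{\mathcal{M}}_F(X,\omega)$.

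The partner identity $[\hat{s},\hat{t}]'=[\hat{t},\hat{s}]$ follows either from the symmetric version of the argument (swapping $\hat{s}$ and $\hat{t}$) applied to the reverse saddle connection $u'$ from $\sigma_k$, or more economically from Proposition \ref{[p^bullet,[p,q]]}, which already asserts $\{[\hat{s},\hat{t}],[\hat{t},\hat{s}]\}\in\mathbb{P}(\mathcal{O})$; hence $[\hat{t},\hat{s}]$ is the unique orientation-antipode of $[\hat{s},\hat{t}]$ and coincides with $[\hat{s},\hat{t}]'$. The main obstacle is the angle-matching step in the third paragraph: while the local isometry of $\eta$ on the punctured triangle is geometrically natural, its continuous extension across the singular vertex $\hat{s}$---and the resulting confirmation that $\hat{u}$ lands in the correct $2\pi$-sector of $\mathcal{O}_j$---requires careful use of the cone structure at $\sigma_j$ and the way $\mathcal{O}_i$ develops into $X$; the remainder is geometric bookkeeping enabled by part (a).
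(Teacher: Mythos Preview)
Your argument is essentially the paper's own proof: part (a) is identical (the circumcenter $c$ is a vertex of $\Omega_i$, so a marked segment in the ball would exclude $c$ from $H(\hat r)\supseteq\Omega_i$), and for the moreover clause you, like the paper, push the chord from $\hat s$ to $\hat t$ through $\eta$ to obtain a saddle connection and then match it against Definition~\ref{[p,q]}. The paper uses $B(\hat s,\hat t)\subset\text{star}_i(X,\omega)$ (via convexity and $0\in\partial B$) rather than your triangle, and simply asserts that $\hat u$ lands in the open $\pi$-sector of $\hat s'$ where you spell out the angle transfer across the cone at $\sigma_j$---but these are cosmetic differences.

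One small gap: your ``more economical'' route for the partner identity via Proposition~\ref{[p^bullet,[p,q]]} does not work as stated. That proposition only gives $\{[\hat s,\hat t],[\hat t,\hat s]\}\in\mathbb{P}(\mathcal{O})$, i.e.\ $\overline{[\hat t,\hat s]}=-\overline{[\hat s,\hat t]}$; but for a fixed point of $\mathcal{O}$ there are $2g-2+\kappa$ candidate antipodes in $\mathbb{P}(\mathcal{O})$, so this alone does not single out $[\hat t,\hat s]$ as the orientation-pair $[\hat s,\hat t]'$ coming from the distinctive set $\widehat{\mathcal{M}}(X,\omega)$. You still need the symmetric argument at the $\sigma_k$ end (which you also offer, and which is exactly what the paper does by ``reversing the orientation of $\gamma$'').
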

\begin{proof}
Let $i$ be the index for which $\hat{s},\hat{t}\in\mathcal{O}_i$, and recall from Definition \ref{circumcenter_circumcircle_ball} that the circumcenter $c:=c(\hat{s},\hat{t})$ determined by $\hat{s},\hat{t}\in\mathcal{O}_i$ is the intersection of $\partial H(\hat{s})$ and $\partial H(\hat{t})$.  Since $\hat{s},\hat{t}\in\widehat{\mathcal{S}}_F(X,\omega)$ are essential points of $\Omega_i$, there are two (adjacent) edges of $\Omega_i$ contained in $\partial H(\hat{s})$ and $\partial H(\hat{t})$, respectively.  The intersection of these edges is precisely the circumcenter $c$, so $c\in\Omega_i$.  Suppose for the sake of contradiction that there is some marked segment $\hat{u}\in B(\hat{s},\hat{t})$.  Then 
\[d(\hat{u},c)<|c|=d(0,c),\]
and hence $c\notin H(\hat{u})$.  This contradicts the fact that $c\in\Omega_i$, so we have
\[B(\hat{s},\hat{t})\cap\widehat{\mathcal{M}}_F(X,\omega)=\varnothing.\]

This result---together with the convexity of $B(\hat{s},\hat{t})$ and the fact that $0\in\partial B(\hat{s},\hat{t})$---gives that $B(\hat{s},\hat{t})\subset\text{star}_i(X,\omega)$, so $\eta$ is defined on all of $B(\hat{s},\hat{t})$.  Let $\gamma$ be the straight line segment in $\mathcal{O}_i$ from $\hat{s}$ to $\hat{t}$, and let $\sigma_j,\sigma_k\in\Sigma$ be the singularities at which the oriented saddle connections $s$ and $t$ end, respectively.  Then, recycling notation, $u:=\eta(\gamma)$ is a saddle connection from $\sigma_j$ to $\sigma_k$, and $\hat{u}\in\widehat{\mathcal{M}}_F(X,\omega)$.  We claim that $\hat{u}=[\hat{s},\hat{t}]$.

By Proposition \ref{adjacent_vs_in_same_pi_sector}, $\hat{s}$ and $\hat{t}$ belong to the same open $\pi$-sector of $\mathcal{O}_i$.  We also have by Proposition \ref{marked_segs_distinctive_and_lpf} that $\text{arg}(\hat{s})\neq\text{arg}(\hat{t})$, so $[\hat{s},\hat{t}]$ is in fact defined.  It is clear from the definition of $\hat{u}$ that $\hat{s}'$ and $\hat{u}$ belong to the same open $\pi$-sector of the same component $\mathcal{O}_j$.  Furthermore, ${\bm\pi}(\hat{u})={\bm\pi}(\hat{t})-{\bm\pi}(\hat{s})$; see Figure \ref{[s,t]_a_marked_seg}.  By Definition \ref{[p,q]}, we have $\hat{u}=[\hat{s},\hat{t}]$.  Reversing the orientation of $\gamma$ in the argument above gives that $\hat{u}'=[\hat{t},\hat{s}]$.
\end{proof}

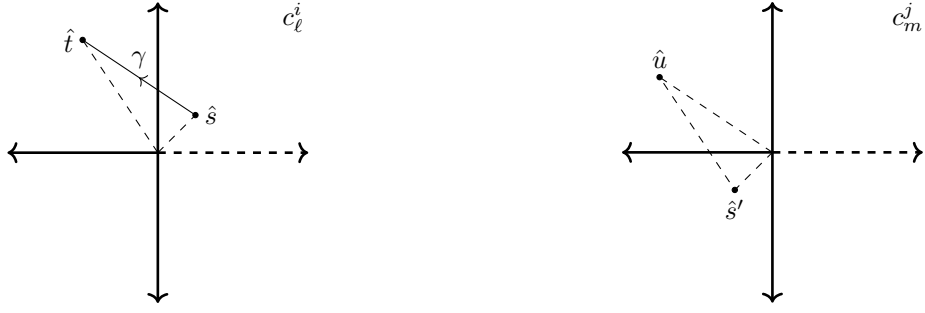
\begin{figure}[t]
\begin{minipage}{.49\textwidth}
\centering
\begin{tikzpicture}
\draw[->,line width=1,dashed] (0,0) -- (2,0);
\draw[->,line width=1] (0,0) -- (-2,0);
\draw[<->,line width=1] (0,-2) -- (0,2);
\draw[dashed] (0,0) -- (.5,.5);
\draw[dashed] (0,0) -- (-1,1.5);
\draw[->-=.5] (.5,.5) -- (-1,1.5) node[midway,above] {$\gamma$};
\filldraw (.5,.5) circle (1pt) node[right] {$\hat{s}$};
\filldraw (-1,1.5) circle (1pt) node[left] {$\hat{t}$};
\draw (1.8,1.8) circle (0pt) node {$c_\ell^i$};
\end{tikzpicture}
\end{minipage}
\begin{minipage}{.49\textwidth}
\centering
\begin{tikzpicture}
\draw[->,line width=1,dashed] (0,0) -- (2,0);
\draw[->,line width=1] (0,0) -- (-2,0);
\draw[<->,line width=1] (0,-2) -- (0,2);
\draw[dashed] (0,0) -- (-1-.5,1.5-.5) -- (-.5,-.5) -- cycle;
\filldraw (-.5,-.5) circle (1pt) node[below] {$\hat{s}'$};
\filldraw (-1-.5,1.5-.5) circle (1pt) node[above] {$\hat{u}$};
\draw (1.8,1.8) circle (0pt) node {$c_m^j$};
\end{tikzpicture}
\end{minipage}
\caption{Illustration of the proof of Lemma \ref{ball_contains_no_marked_segs}, showing that $\hat{u}=[\hat{s},\hat{t}]$ is a marked segment for adjacent $\hat{s},\hat{t}\in\widehat{\mathcal{S}}_F(X,\omega)$.}
\label{[s,t]_a_marked_seg}
\end{figure}

Recall that permissible triples are defined in terms of three distinctive, limit-point free subsets of $\mathbb{P}(\mathcal{O})$ (Definition \mbox{\ref{permissible_triples_defn}}).  We introduce notation for a set of---a priori non-permissible---triples determined by adjacent $\hat{s},\hat{t}\in\widehat{\mathcal{S}}_F(X,\omega)$ and subsequently prove that these are in fact permissible triples belonging to the set $\mathcal{P}(\text{Sim}_G)$, where $\text{SL}(X,\omega)=\langle G\rangle$.

\begin{defn}\label{set_of_permissible_triples}
For $(X,\omega)\in\mathcal{H}(d_1,\dots,d_\kappa)$, let
\[\mathcal{P}(X,\omega):=\{(n(\hat{s}),n(\hat{t}),n([\hat{s},\hat{t}]))\ |\ \hat{s},\ \hat{t}\ \text{are adjacent elements of}\ \widehat{\mathcal{S}}_F(X,\omega)\}.\]
\end{defn}

\begin{prop}\label{vor_staples_det_perm_triples_prop}
For any fanning group $\Gamma\le\mathrm{SL}_2\mathbb{R}$ generated by $G\subset\Gamma$ and any $(X,\omega)\in\mathcal{H}(d_1,\dots,d_\kappa)$ with Veech group $\mathrm{SL}(X,\omega)=\Gamma$, we have $\mathcal{P}(X,\omega)\subset\mathcal{P}(\text{Sims}_G)$.  Moreover, if $G$ is finite (i.e. $\Gamma$ is a lattice), then there exists some $N\in\mathbb{N}$ such that for any $(X,\omega)\in\mathcal{H}(d_1,\dots,d_\kappa)$ with Veech group $\mathrm{SL}(X,\omega)=\Gamma$, we have $\mathcal{P}(X,\omega)\subset\mathcal{P}(\text{Sims}_G)\subset\mathcal{P}(\text{Sims}^N_G)$.
\end{prop}
\begin{proof}
Let $(n(\hat{s}),n(\hat{t}),n([\hat{s},\hat{t}]))\in\mathcal{P}(X,\omega)$ for some $(X,\omega)$ with Veech group $\Gamma$.  By Theorem \ref{marked_segments_as_scaled_simulations}, there are $\mathfrak{s}_{\hat{s}},\mathfrak{s}_{\hat{t}},\mathfrak{s}_{[\hat{s},\hat{t}]}\in\mathfrak{S}_G$ for which 
\[n([\{\hat{s},\hat{s}'\}])=\text{sim}(\mathfrak{s}_{\hat{s}}),\ \ \ n([\{\hat{t},\hat{t}'\}])=\text{sim}(\mathfrak{s}_{\hat{t}}),\ \ \ \text{and}\ \ \ n([\{[\hat{s},\hat{t}],[\hat{s},\hat{t}]'\}])=\text{sim}(\mathfrak{s}_{[\hat{s},\hat{t}]}).\]
Hence 
\[(n(\hat{s}),n(\hat{t}),n([\hat{s},\hat{t}]))\in \text{sim}_F(\mathfrak{s}_{\hat{s}})\times\text{sim}_F(\mathfrak{s}_{\hat{t}})\times\text{sim}_F(\mathfrak{s}_{[\hat{s},\hat{t}]}),\]
and we must show that $(n(\hat{s}),n(\hat{t}),n([\hat{s},\hat{t}]))$ is a permissible triple.  The corresponding permissible scalars we shall use are the minimal lengths $(r_{\hat{s}}, r_{\hat{t}}, r_{[\hat{s},\hat{t}]})\in\mathbb{R}_+^3$ of pairs in the respective $\text{Aff}_\mathcal{O}^+(X,\omega)$-orbits $[\{\hat{s},\hat{s}'\}],\ [\{\hat{t},\hat{t}'\}]$ and $[\{[\hat{s},\hat{t}],[\hat{s},\hat{t}]'\}]$.  In particular, $r_{\hat{s}}n(\hat{s})=\hat{s},\ r_{\hat{t}}n(\hat{t})=\hat{t}$ and $r_{[\hat{s},\hat{t}]}n([\hat{s},\hat{t}])=[\hat{s},\hat{t}]$.  Moreover, $r_{\hat{s}}\text{sim}(\mathfrak{s}_{\hat{s}})=[\{\hat{s},\hat{s}'\}],\ r_{\hat{t}}\text{sim}(\mathfrak{s}_{\hat{t}})=[\{\hat{t},\hat{t}'\}]$ and $r_{[\hat{s},\hat{t}]}\text{sim}(\mathfrak{s}_{[\hat{s},\hat{t}]})=[\{[\hat{s},\hat{t}],[\hat{s},\hat{t}]'\}]$.  We now verify the two criteria of Definition \mbox{\ref{permissible_triples_defn}} on the triple $(n(\hat{s}),n(\hat{t}),n([\hat{s},\hat{t}]))$ and the scalars $(r_{\hat{s}}, r_{\hat{t}}, r_{[\hat{s},\hat{t}]})$.
\begin{enumerate}
\item[(i)]  From the observations above, $[r_{\hat{s}}n(\hat{s}),r_{\hat{t}}n(\hat{t})]=[\hat{s},\hat{t}]$ and $[r_{\hat{t}}n(\hat{t}),r_{\hat{s}}n(\hat{s})]=[\hat{t},\hat{s}]$.  Since $\hat{s}$ and $\hat{t}$ are adjacent elements of $\widehat{\mathcal{S}}_F(X,\omega)$, Lemma \ref{ball_contains_no_marked_segs} guarantees these are defined.  Moreover, 
\[[r_{\hat{s}}n(\hat{s}),r_{\hat{t}}n(\hat{t})]=[\hat{s},\hat{t}]=r_{[\hat{s},\hat{t}]}n([\hat{s},\hat{t}])\]
and 
\[[r_{\hat{t}}n(\hat{t}),r_{\hat{s}}n(\hat{s})]=[\hat{t},\hat{s}]=[\hat{s},\hat{t}]'=(r_{[\hat{s},\hat{t}]}n([\hat{s},\hat{t}]))'=r_{[\hat{s},\hat{t}]}n([\hat{s},\hat{t}])'.\]

\item[(ii)]  We must show that 
\[B(r_{\hat{s}}n(\hat{s}),r_{\hat{t}}n(\hat{t}))\cup B(r_{[\hat{s},\hat{t}]}n([\hat{s},\hat{t}]),r_{\hat{s}}n(\hat{s})')\cup B(r_{\hat{t}}n(\hat{t})',r_{[\hat{s},\hat{t}]}n([\hat{s},\hat{t}])')\]
does not intersect \[r_{\hat{s}}\text{sim}_F(\mathfrak{s}_{\hat{s}})\cup r_{\hat{t}}\text{sim}_F(\mathfrak{s}_{\hat{t}})\cup r_{[\hat{s},\hat{t}]}\text{sim}_F(\mathfrak{s}_{[\hat{s},\hat{t}]}),\]
or, equivalently,
\[\left(B(\hat{s},\hat{t})\cup B([\hat{s},\hat{t}],\hat{s}')\cup B(\hat{t}',[\hat{s},\hat{t}]')\right)\cap\left([\{\hat{s},\hat{s}'\}]_F\cup [\{\hat{t},\hat{t}'\}]_F\cup [\{[\hat{s},\hat{t}],[\hat{s},\hat{t}]'\}]_F\right)=\varnothing.\]
Since each of the $\text{Aff}_\mathcal{O}^+(X,\omega)$-orbits are contained in $\widehat{\mathcal{M}}(X,\omega)$, it suffices to show
\[\left(B(\hat{s},\hat{t})\cup B([\hat{s},\hat{t}],\hat{s}')\cup B(\hat{t}',[\hat{s},\hat{t}]')\right)\cap\widehat{\mathcal{M}}_F(X,\omega)=\varnothing.\]
Lemma \ref{ball_contains_no_marked_segs} gives that 
\[B(\hat{s},\hat{t})\cap\widehat{\mathcal{M}}_F(X,\omega)=\varnothing.\]
Now suppose for the sake of contradiction that
\[B([\hat{s},\hat{t}],\hat{s}')\cap\widehat{\mathcal{M}}_F(X,\omega)\neq\varnothing,\]
and let $j$ be the index for which $\hat{s}'\in\mathcal{O}_j$.  Since $\widehat{\mathcal{M}}_F(X,\omega)$ has no limit points, there are at most finitely many marked segments belonging to this intersection, and by Proposition \ref{marked_segs_distinctive_and_lpf}, none of these marked segments have the same argument as $\hat{s}'$.  Choose some $\hat{u}$ in the intersection with argument nearest that of $\hat{s}'$.  Then the straight-line segment from $\hat{s}'$ to $\hat{u}$ belongs to $\text{star}_j(X,\omega)$, and a similar argument to that in the proof of Lemma \ref{ball_contains_no_marked_segs} gives that $[\hat{s}',\hat{u}]\in\widehat{\mathcal{M}}(X,\omega)$.    By definition of $[\hat{s}',\hat{u}]$, this marked segment belongs to the same open $\pi$-sector as $\hat{s}$ and hence to $B(\hat{s},\hat{t})$, contradicting Lemma \ref{ball_contains_no_marked_segs} (see Figure \ref{marked_segs_and_balls}).  A similar argument gives
\[B(\hat{t}',[\hat{s},\hat{t}]')\cap\widehat{\mathcal{M}}_F(X,\omega)=\varnothing.\]
\end{enumerate}
Thus $(n(\hat{s}),n(\hat{t}),n([\hat{s},\hat{t}]))$ is a permissible triple, and $\mathcal{P}(X,\omega)\subset\mathcal{P}(\text{Sims}_G)$.

Now suppose that $G$ is finite.  Then the set $\text{Sims}_G$ of all simulations is finite, and by Lemma \mbox{\ref{permissible_triples_finite}}, the set $\mathcal{P}(\text{Sims}_G)$ is finite.  Hence for each $(p,q,u)\in \mathcal{P}(\text{Sims}_G)$ there is some $n$ for which $(p,q,u)\in \mathcal{P}(\text{Sims}^n_G)$.  Taking $N$ to be the maximum of these, we have $\mathcal{P}(X,\omega)\subset\mathcal{P}(\text{Sims}_G)\subset \mathcal{P}(\text{Sims}^N_G)$.
\end{proof}

\begin{figure}[t]
\begin{minipage}{.49\textwidth}
\centering
\begin{tikzpicture}[scale=1.2]
\draw[->,line width=1,dashed] (0,0) -- (2.2,0);
\draw[->,line width=1] (0,0) -- (-2.2,0);
\draw[<->,line width=1] (0,-1) -- (0,2.2);
\draw (2,2) circle (0pt) node {$c_\ell^i$};
\draw[dashed] ({cos(120)},{sin(120)}) circle (1);
\filldraw ({cos(120)+cos(330)},{sin(120)+sin(330)}) circle (1pt) node[right] {$\hat{s}$};
\filldraw ({cos(120)+cos(170)},{sin(120)+sin(170)}) circle (1pt) node[left] {$\hat{t}$};
\draw[dashed] (0,0) -- ({cos(120)+cos(330)},{sin(120)+sin(330)}) -- ({cos(120)+cos(170)},{sin(120)+sin(170)}) -- cycle;
\filldraw ({cos(120)+.5*cos(60)},{sin(120)+.5*sin(60)}) circle (1pt) node[left] {$[\hat{s}',\hat{u}]$};
\end{tikzpicture}
\end{minipage}
\begin{minipage}{.49\textwidth}
\centering
\begin{tikzpicture}[scale=1.2]
\draw[->,line width=1,dashed] (0,0) -- (2.2,0);
\draw[->,line width=1] (0,0) -- (-2.2,0);
\draw[<->,line width=1] (0,-1) -- (0,2.2);
\draw (2,2) circle (0pt) node {$c_m^j$};
\draw[dashed] ({-cos(330)},{-sin(330)}) circle (1);
\filldraw ({-cos(120)-cos(330)},{-sin(120)-sin(330)}) circle (1pt) node[below] {$\hat{s}'$};
\filldraw ({cos(170)-cos(330)},{sin(170)-sin(330)}) circle (1pt) node[left] {$[\hat{s},\hat{t}]$};
\draw[dashed] (0,0) -- ({-cos(120)-cos(330)},{-sin(120)-sin(330)}) -- ({cos(170)-cos(330)},{sin(170)-sin(330)}) -- cycle;
\filldraw ({-cos(330)+.5*cos(60)},{-sin(330)+.5*sin(60)}) circle (1pt) node[left] {$\hat{u}$};
\end{tikzpicture}
\end{minipage}
\caption{Illustration of the proof of Proposition \ref{vor_staples_det_perm_triples_prop}.}
\label{marked_segs_and_balls}
\end{figure}
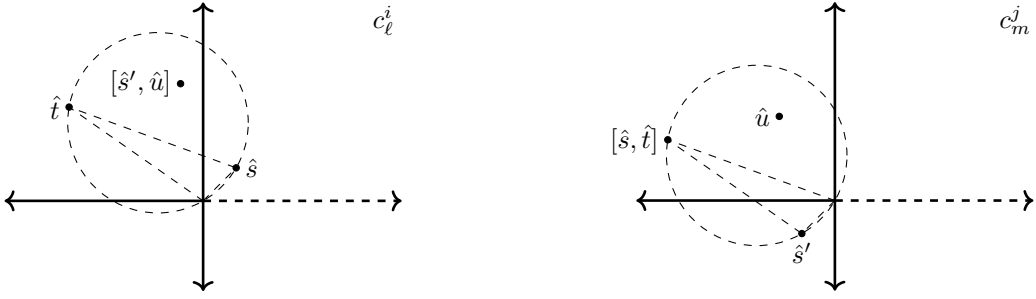

\subsection{Finiteness of lattice surfaces with given Veech groups}\label{Finiteness of lattice surfaces with given Veech groups}

Throughout this subsection, let $G$ be a set of generators of a fanning group $\Gamma$.  As noted at the end of \S\ref{Simulating orbits}, to prove Theorem \ref{finiteness_thm} it suffices to show that for any finite collection of simulations determined by $G$, there are at most finitely many scalars for which the union of scaled simulations coincides with the marked segments of a unit-area translation surface.  Propositions \ref{scalars_unique}, \ref{permute_permissible_triples} and $\ref{vor_staples_det_perm_triples_prop}$ suggest that for adjacent elements $\hat{s},\hat{t}\in\widehat{\mathcal{S}}_F(X,\omega)$ of a translation surface $(X,\omega)$ with Veech group $\Gamma$, the desired scalars for the simulations corresponding to $\hat{s},\hat{t}$ and $[\hat{s},\hat{t}]$ are uniquely determined by one another.  Hence if the elements of $\widehat{\mathcal{S}}_F(X,\omega)$ are all---loosely speaking---`transitively adjacent' to one another (or their orientation-paired inverses), then an arbitrary choice of one scalar uniquely determines all other scalars.  

We make these vague notions precise by introducing a \textit{permissible triples graph} corresponding to a set $P$ of permissible triples whose vertices are the distinctive, limit-point free sets corresponding to the triples and whose edges are defined between the vertices corresponding to entries of the same permissible triple in $P$.  The `transitive adjacency' alluded to above refers to the connectedness of such a graph.

\begin{defn}\label{simulation_graph}
Let ${\bf Q}=\{Q_i\}_{i\in I}$ be a collection of distinctive, limit-point free subsets of $\mathbb{P}(\mathcal{O})$.  For a subset $P\subset\mathcal{P}({\bf Q})$ of permissible triples, define the \textit{permissible triples graph} determined by $P$ to be the graph $\mathfrak{G}_P=(\mathfrak{V}_P,\mathfrak{E}_P)$ with vertex set
\[\mathfrak{V}_P=\{Q_i,Q_j,Q_k\ |\ \text{there exists some}\ (q_i,q_j,q_k)\in P\cap \left((Q_i)_F\times (Q_j)_F\times (Q_k)_F \right)\}\]
consisting of the sets in ${\bf Q}$ corresponding to the permissible triples in $P$, and edge set
\[\mathfrak{E}_P=\{\{Q_\ell,Q_m\}\ |\ \ell,m\in\{i,j,k\}\ \text{for some}\ (q_i,q_j,q_k)\in P\cap \left((Q_i)_F\times (Q_j)_F\times (Q_k)_F \right)\}\]
where an edge connects vertices $Q_\ell$ and $Q_m$ if there is some permissible triple in $P$ to which both $Q_\ell$ and $Q_m$ correspond.  We call $(r_i)_{i\in I}\in\mathbb{R}_+^{|I|}$ \textit{consistent scalars} for the graph $\mathfrak{G}_P$ if $(r_i,r_j,r_k)$ are permissible scalars for each permissible triple $(q_i,q_j,q_k)\in P\cap \left((Q_i)_F\times (Q_j)_F\times (Q_k)_F \right)$.  We call the graph $\mathfrak{G}_P$ \textit{consistent} if it admits consistent scalars.
\end{defn}

\begin{eg}\label{simulations_example2}
Consider again Example \ref{simulations_example} and Figure \ref{simulations_figure}; let $p\in\text{sim}(\mathfrak{s}_0)$ be the point with argument $3\pi/2$, $q\in\text{sim}(\mathfrak{s}_1)$ the point with argument $\pi$ and $u\in\text{sim}(\mathfrak{s}_2)$ the point with argument $3\pi/4$.  Figure \ref{simulations_figure2} shows that $(p,q,u)\in\mathcal{P}(\text{Sims}_G)$ is a permissible triple with permissible scalars $(1,1,\sqrt{2})$.  Let $P:=\{(p,q,u)\}$.  Then $\mathfrak{G}_P$ is the graph with vertex set $\mathfrak{V}_P=\{\text{sim}(\mathfrak{s}_0),\text{sim}(\mathfrak{s}_1),\text{sim}(\mathfrak{s}_2)\}$ and edge set 
\[\mathfrak{E}_P=\{\{\text{sim}(\mathfrak{s}_0),\text{sim}(\mathfrak{s}_1)\},\{\text{sim}(\mathfrak{s}_1),\text{sim}(\mathfrak{s}_2)\},\{\text{sim}(\mathfrak{s}_2),\text{sim}(\mathfrak{s}_0)\}\},\] 
and $(1,1,\sqrt{2})$ are consistent scalars for $\mathfrak{G}_P$.
\end{eg}

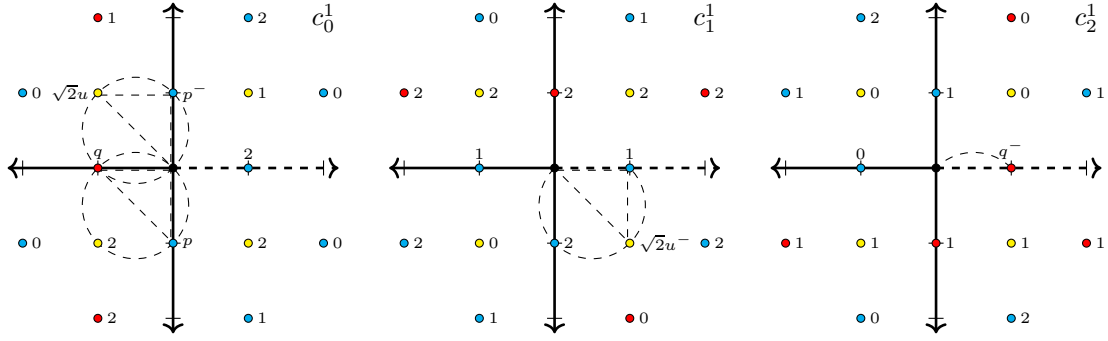
\begin{figure}[t]
\centering
\begin{minipage}{.3\textwidth}
\begin{tikzpicture}
\draw [->,dashed,line width=1] (0,0) -- (2.2,0);
\draw [<-,line width=1] (-2.2,0) -- (0,0);
\draw [<->,line width=1] (0,-2.2) -- (0,2.2); 
\draw (2,2) circle (0pt) node {$c_0^1$};
\draw (1,-.1) -- (1,.1);
\draw (2,-.1) -- (2,.1);
\draw (-1,-.1) -- (-1,.1);
\draw (-2,-.1) -- (-2,.1);
\draw (-.1,1) -- (.1,1);
\draw (-.1,2) -- (.1,2);
\draw (-.1,-1) -- (.1,-1);
\draw (-.1,-2) -- (.1,-2);
\filldraw (0,0) circle (1.5pt);
\draw[dashed] (-1/2,-1/2) circle ({sqrt(2)/2});
\draw[dashed] (-1/2,1/2) circle ({sqrt(2)/2});
\draw[dashed] (-0.03,-.03) -- (-.97,-.03) -- (-0.03,-.97) -- cycle;
\draw[dashed] (-0.03,.97) -- (-.97,.97) -- (-0.03,.03) -- cycle;
\filldraw[fill=cyan] (1,0) circle (1.5pt) node[above] {\tiny $2$};
\filldraw[fill=cyan] (0,-1) circle (1.5pt) node[right] {\tiny $p$};
\filldraw[fill=cyan] (0,1) circle (1.5pt) node[right] {\tiny $p^-$};
\filldraw[fill=cyan] (-2,-1) circle (1.5pt) node[right] {\tiny $0$};
\filldraw[fill=cyan] (2,1) circle (1.5pt) node[right] {\tiny $0$};
\filldraw[fill=cyan] (2,-1) circle (1.5pt) node[right] {\tiny $0$};
\filldraw[fill=cyan] (-2,1) circle (1.5pt) node[right] {\tiny $0$};
\filldraw[fill=cyan] (1,2) circle (1.5pt) node[right] {\tiny $2$};
\filldraw[fill=cyan] (1,-2) circle (1.5pt) node[right] {\tiny $1$};
\filldraw[fill=red] (-1,0) circle (1.5pt) node[above] {\tiny $q$};
\filldraw[fill=red] (-1,-2) circle (1.5pt) node[right] {\tiny $2$};
\filldraw[fill=red] (-1,2) circle (1.5pt) node[right] {\tiny $1$};
\filldraw[fill=yellow] (1,1) circle (1.5pt) node[right] {\tiny $1$};
\filldraw[fill=yellow] ({-1},{1}) circle (1.5pt) node[left] {\tiny $\sqrt2 u$};
\filldraw[fill=yellow] ({-1},{-1}) circle (1.5pt) node[right] {\tiny $2$};
\filldraw[fill=yellow] ({1},{-1}) circle (1.5pt) node[right] {\tiny $2$};
\end{tikzpicture}
\end{minipage}
\begin{minipage}{.3\textwidth}
\begin{tikzpicture}
\draw [->,dashed,line width=1] (0,0) -- (2.2,0);
\draw [<-,line width=1] (-2.2,0) -- (0,0);
\draw [<->,line width=1] (0,-2.2) -- (0,2.2);
\draw (2,2) circle (0pt) node {$c_1^1$};
\draw (1,-.1) -- (1,.1);
\draw (2,-.1) -- (2,.1);
\draw (-1,-.1) -- (-1,.1);
\draw (-2,-.1) -- (-2,.1);
\draw (-.1,1) -- (.1,1);
\draw (-.1,2) -- (.1,2);
\draw (-.1,-1) -- (.1,-1);
\draw (-.1,-2) -- (.1,-2);
\filldraw (0,0) circle (1.5pt);
\draw[dashed] (0,0) arc (135:405:{sqrt(2)/2});
\draw[dashed] (.97,-.03) -- (.03,-.03) -- (.97,-.97) -- cycle;
\filldraw[fill=cyan] (0,-1) circle (1.5pt) node[right] {\tiny $2$};
\filldraw[fill=cyan] (-2,-1) circle (1.5pt) node[right] {\tiny $2$};
\filldraw[fill=cyan] (2,-1) circle (1.5pt) node[right] {\tiny $2$};
\filldraw[fill=cyan] (-1,2) circle (1.5pt) node[right] {\tiny $0$};
\filldraw[fill=cyan] (1,0) circle (1.5pt) node[above] {\tiny $1$};
\filldraw[fill=cyan] (-1,0) circle (1.5pt) node[above] {\tiny $1$};
\filldraw[fill=cyan] (1,2) circle (1.5pt) node[right] {\tiny $1$};
\filldraw[fill=cyan] (-1,-2) circle (1.5pt) node[right] {\tiny $1$};
\filldraw[fill=red] (0,1) circle (1.5pt) node[right] {\tiny $2$};
\filldraw[fill=red] (2,1) circle (1.5pt) node[right] {\tiny $2$};
\filldraw[fill=red] (-2,1) circle (1.5pt) node[right] {\tiny $2$};
\filldraw[fill=red] (1,-2) circle (1.5pt) node[right] {\tiny $0$};
\filldraw[fill=yellow] ({1},{1}) circle (1.5pt) node[right] {\tiny $2$};
\filldraw[fill=yellow] ({-1},{1}) circle (1.5pt) node[right] {\tiny $2$};
\filldraw[fill=yellow] ({-1},{-1}) circle (1.5pt) node[right] {\tiny $0$};
\filldraw[fill=yellow] ({1},{-1}) circle (1.5pt) node[right] {\tiny $\sqrt{2}u^-$};
\end{tikzpicture}
\end{minipage}
\begin{minipage}{.3\textwidth}
\begin{tikzpicture}
\draw [->,dashed,line width=1] (0,0) -- (2.2,0);
\draw [<-,line width=1] (-2.2,0) -- (0,0);
\draw [<->,line width=1] (0,-2.2) -- (0,2.2);
\draw (2,2) circle (0pt) node {$c_2^1$};
\draw (1,-.1) -- (1,.1);
\draw (2,-.1) -- (2,.1);
\draw (-1,-.1) -- (-1,.1);
\draw (-2,-.1) -- (-2,.1);
\draw (-.1,1) -- (.1,1);
\draw (-.1,2) -- (.1,2);
\draw (-.1,-1) -- (.1,-1);
\draw (-.1,-2) -- (.1,-2);
\filldraw (0,0) circle (1.5pt);
\draw[dashed] (1,0) arc (45:135:{sqrt(2)/2});
\filldraw[fill=cyan] (-1,0) circle (1.5pt) node[above] {\tiny $0$};
\filldraw[fill=cyan] (0,1) circle (1.5pt) node[right] {\tiny $1$};
\filldraw[fill=cyan] (2,1) circle (1.5pt) node[right] {\tiny $1$};
\filldraw[fill=cyan] (-2,1) circle (1.5pt) node[right] {\tiny $1$};
\filldraw[fill=cyan] (-1,-2) circle (1.5pt) node[right] {\tiny $0$};
\filldraw[fill=cyan] (1,-2) circle (1.5pt) node[right] {\tiny $2$};
\filldraw[fill=cyan] (-1,2) circle (1.5pt) node[right] {\tiny $2$};
\filldraw[fill=red] (1,0) circle (1.5pt) node[above] {\tiny $q^-$};
\filldraw[fill=red] (0,-1) circle (1.5pt) node[right] {\tiny $1$};
\filldraw[fill=red] (-2,-1) circle (1.5pt) node[right] {\tiny $1$};
\filldraw[fill=red] (2,-1) circle (1.5pt) node[right] {\tiny $1$};
\filldraw[fill=red] (1,2) circle (1.5pt) node[right] {\tiny $0$};
\filldraw[fill=yellow] ({1},{1}) circle (1.5pt) node[right] {\tiny $0$};
\filldraw[fill=yellow] ({-1},{1}) circle (1.5pt) node[right] {\tiny $0$};
\filldraw[fill=yellow] ({-1},{-1}) circle (1.5pt) node[right] {\tiny $1$};
\filldraw[fill=yellow] ({1},{-1}) circle (1.5pt) node[right] {\tiny $1$};
\end{tikzpicture}
\end{minipage}
\caption{Subsets of the scaled simulations $\text{sim}(\mathfrak{s}_0),\ \text{sim}(\mathfrak{s}_1)$ and $\sqrt{2}\text{sim}(\mathfrak{s}_2)$, with $\mathfrak{s}_0,\ \mathfrak{s}_1\ \mathfrak{s}_2\in\mathfrak{S}_{G}$ as defined in Example \ref{simulations_example}, and balls showing that $(p,q,u)\in\mathcal{P}(\text{Sims}_{G})$ is a permissible triple with permissible scalars $(1,1,\sqrt{2})$.}
\label{simulations_figure2}
\end{figure}

\begin{prop}\label{sim_graph_for_P(X,omega)_consistent}
Let $(X,\omega)$ be a fanning surface with Veech group $\mathrm{SL}(X,\omega)=\Gamma$, and let $P:=\mathcal{P}(X,\omega)\subset\mathcal{P}(\text{Sims}_G)$.  Then the permissible triples graph $\mathfrak{G}_P$ is connected and consistent, and if $(r_\mathfrak{s})_{\text{sim}(\mathfrak{s})\in\mathfrak{V}_P}$ are consistent scalars for $\mathfrak{G}_P$, then any other consistent scalars are of the form $(a r_\mathfrak{s})_{\text{sim}(\mathfrak{s})\in\mathfrak{V}_P}$ with $a\in\mathbb{R}_+$.
\end{prop}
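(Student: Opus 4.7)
The plan is to separately verify the existence of consistent scalars and their uniqueness up to a common positive scalar.

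For existence, I would simply re-use the construction from the proof of Proposition \ref{vor_staples_det_perm_triples_prop}: for each simulation $\text{sim}(\mathfrak{s})\in\mathfrak{V}_P$, which equals $n([\{\hat{s},\hat{s}'\}])$ for some orbit in $\widehat{\mathcal{M}}(X,\omega)/\text{Aff}^+_\mathcal{O}(X,\omega)$, let $r_\mathfrak{s}$ denote the minimal length of an orientation-paired pair in that orbit. This value is intrinsic to the orbit, so the assignment is well defined irrespective of which $\mathfrak{s}\in\mathfrak{S}_G$ represents the vertex or which triple it appears in. The permissibility of the resulting triple $(r_{\hat{s}},r_{\hat{t}},r_{[\hat{s},\hat{t}]})$ for each $(n(\hat{s}),n(\hat{t}),n([\hat{s},\hat{t}]))\in P$ was already checked in that proof, so $(r_\mathfrak{s})$ is a consistent assignment for $\mathfrak{G}_P$.

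For uniqueness, the key local observation is that scalars propagate rigidly along edges. For any edge $\{\text{sim}(\mathfrak{r}),\text{sim}(\mathfrak{s})\}\in\mathfrak{E}_P$, some triple $(p,q,u)\in P$ realizes both $\mathfrak{r}$ and $\mathfrak{s}$ among its three labels, and Proposition \ref{scalars_unique}---combined with Proposition \ref{permute_permissible_triples} to cycle any chosen entry into the first coordinate---shows that once the scalar on one entry of a permissible triple is fixed, the other two are the unique solution of an invertible linear system. By linearity of that system, scaling one entry by $a\in\mathbb{R}_+$ forces the other two to scale by $a$ as well. Hence if $(r_\mathfrak{s})$ and $(r'_\mathfrak{s})$ are both consistent, the ratio $r'_\mathfrak{s}/r_\mathfrak{s}$ is preserved along every edge of $\mathfrak{G}_P$.

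The remaining and main obstacle is establishing that $\mathfrak{G}_P$ is \emph{connected}, after which the uniqueness claim follows by propagating the ratio from any fixed basepoint. I would prove connectedness using the Voronoi decomposition: within a single component $\mathcal{O}_i$, the essential points $\mathcal{E}_i$ are cyclically arranged along $\partial\Omega_i$ with every consecutive pair adjacent (Definition \ref{convex_body}); each such pair yields a triple in $P$ and therefore an edge in $\mathfrak{G}_P$, so the orbits of Voronoi staples with a representative in $\mathcal{E}_i$ form a connected subgraph. Two such subgraphs, one for $\mathcal{E}_i$ and one for $\mathcal{E}_j$, share a vertex whenever some Voronoi staple has $\hat{s}\in\mathcal{E}_i$ and $\hat{s}'\in\mathcal{E}_j$, since the pair $\{\hat{s},\hat{s}'\}$ constitutes a single orbit. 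That any two components $\mathcal{E}_i,\mathcal{E}_j$ are joined by a chain of such shared staple orbits is equivalent to connectedness of the dual graph of the Voronoi decomposition (2-cells joined by shared 1-cells), which follows from connectedness of $X$: a hypothetical partition of the $\mathcal{C}_i$ into two non-empty families with no common 1-cell would make the corresponding closed subsurfaces meet in at most the finite set of 0-cells, but removing finitely many points cannot disconnect a closed surface. The remaining vertices of $\mathfrak{G}_P$---orbits of middle segments $[\hat{s},\hat{t}]$---are by construction attached to the two adjacent staple orbits through the triple that produced them, so the whole graph is connected, closing the uniqueness argument.
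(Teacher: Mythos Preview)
Your proposal is correct and follows essentially the same route as the paper: existence via the scalars from Proposition~\ref{vor_staples_det_perm_triples_prop}, uniqueness via edge-propagation using Propositions~\ref{scalars_unique} and~\ref{permute_permissible_triples}, reduction to connectedness of $\mathfrak{G}_P$, and then connectedness via the Voronoi-staple subgraph together with attaching the middle-segment vertices. The only difference is presentational: where the paper argues in one line that disconnection of the staple subgraph would disconnect $(X,\omega)$ after gluing the $\Omega_i$, you unpack this into cyclic connectedness within each $\mathcal{E}_i$ plus connectedness of the dual Voronoi graph---this is a faithful expansion of the same idea rather than a different argument.
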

\begin{proof}
Theorem \ref{marked_segments_as_scaled_simulations} implies that the vertex set $\mathfrak{V}_P$ is finite, say $\mathfrak{V}_P=\{\text{sim}(\mathfrak{s}_i)\}_{i=1}^n$.  The same theorem also gives that
\[\bigcup_{i=1}^n r_i\text{sim}(\mathfrak{s}_i)\subset\widehat{\mathcal{M}}(X,\omega)\]
for some $(r_i)_{i=1}^n\in\mathbb{R}_+^n$.  In fact, each $r_i$ is the minimal length of a pair in the $\text{Aff}_\mathcal{O}^+(X,\omega)$-orbit for which $r_i\text{sim}(\mathfrak{s}_i)=[\{\hat{s},\hat{s}'\}]$.  The proof of Proposition \ref{vor_staples_det_perm_triples_prop} shows that the scalars $(r_i)_{i=1}^n\in\mathbb{R}_+^n$ are consistent, so the graph $\mathfrak{G}_P$ is consistent.

Now suppose that $(r_i')_{i=1}^n\in\mathbb{R}_+^n$ are also consistent scalars for $\mathfrak{G}_P$, and recall the definition of the edge set $\mathfrak{E}_P$.  Induction with Propositions \ref{scalars_unique} and \ref{permute_permissible_triples} implies that any of the scalars $r_i'$ corresponding to a simulation of a connected component of $\mathfrak{G}_P$ uniquely determines the remaining scalars $r_j'$ of the simulations of the component.  In particular, if $r_i'=a r_i$ for some $a\in\mathbb{R}_+$, then $r_j'=a r_j$ for each of these scalars.  Hence it suffices to show that $\mathfrak{G}_P$ is connected.  

Consider the subgraph $(V,E)$ of $(\mathfrak{V}_P,\mathfrak{E}_P)$ with vertex set $V$ consisting of all simulations in $\mathfrak{V}_P$ which contain a pair of normalized marked Voronoi staples and edge set $E$ consisting of edges $\{\text{sim}(\mathfrak{s}_i),\text{sim}(\mathfrak{s}_j)\}$, where $\text{sim}(\mathfrak{s}_i)$ and $\text{sim}(\mathfrak{s}_j)$ contain $\{n(\hat{s}),n(\hat{s})'\}$ and $\{n(\hat{t}),n(\hat{t})'\}$, respectively, for adjacent $\hat{s},\hat{t}\in\widehat{\mathcal{S}}_F(X,\omega)$.  Recall that $(X,\omega)$ is isometric to the quotient space of $\bigsqcup_{i=1}^\kappa \Omega_i$ under the equivalence relation given by identifying the two edges of convex bodies which correspond to a pair of marked Voronoi staples (\S\ref{Marked segments and Voronoi staples} above, and also \S3 of \cite{ESS}).  If $(V,E)$ were disconnected, then after this identification of edges of convex bodies the resulting surface---and hence $(X,\omega)$---would be disconnected.  This is a contradiction, so the subgraph $(V,E)$ is connected.  By definition of $P=\mathcal{P}(X,\omega)$ and $\mathfrak{V}_P$, any vertex $\text{sim}(\mathfrak{s}_i)\in\mathfrak{V}_P\backslash V$ must contain $\{n([\hat{s},\hat{t}]),n([\hat{s},\hat{t}])'\}$ for some adjacent $\hat{s},\hat{t}\in\widehat{\mathcal{S}}_F(X,\omega)$.  By definition of $\mathfrak{E}_P$, there are edges of $\mathfrak{G}_P$ from $\text{sim}(\mathfrak{s}_i)$ to the simulations containing $\{n(\hat{s}),n(\hat{s})'\}$ and $\{n(\hat{t}),n(\hat{t})'\}$, both of which belong to $V\subset \mathfrak{V}_P$.  Thus $\mathfrak{G}_P$ is connected, and the result follows.
\end{proof}

We are now ready to prove Theorem \ref{finiteness_thm}:

\begin{proof}[Proof of Theorem \ref{finiteness_thm}]
Let $\Gamma\le\mathrm{SL}_2\mathbb{R}$ be a lattice.  By Lemma \ref{lattice_and_fanning}, $\Gamma$ is a finitely generated fanning group.  Let $G$ be a finite set of generators of $\Gamma$ and fix $\mathcal{H}_1(d_1,\dots,d_\kappa)$.  We must show that the set of all $(X,\omega)\in\mathcal{H}_1(d_1,\dots,d_\kappa)$ for which $\mathrm{SL}(X,\omega)=\Gamma$ is finite.  By Proposition \ref{vor_staples_det_perm_triples_prop}, each of these translation surfaces belongs to the set
\[\{(X,\omega)\in\mathcal{H}_1(d_1,\dots,d_\kappa)\ |\ \mathcal{P}(X,\omega)\subset\mathcal{P}(\text{Sims}_G)\},\]
so it suffices to show that this latter set is finite.  By Lemmas \ref{permissible_triples_finite} and \ref{Sims_G_finite}, the set $\mathcal{P}(\text{Sims}_G)$ of all permissible triples is finite, so it suffices to show for any $P\subset\mathcal{P}(\text{Sims}_G)$ that the set
\[\{(X,\omega)\in\mathcal{H}_1(d_1,\dots,d_\kappa)\ |\ \mathcal{P}(X,\omega)=P\}\]
is finite.  Let $\mathfrak{V}_P=\{\text{sim}(\mathfrak{s}_i)\}_{i=1}^n$ be the vertex set of the permissible triples graph $\mathfrak{G}_P$.  As in the proof of Proposition \ref{sim_graph_for_P(X,omega)_consistent}, for each $(X,\omega)$ with $\mathcal{P}(X,\omega)=P$, there are consistent scalars $(r_i)_{i=1}^n\in\mathbb{R}_+^n$ for which 
\[\bigcup_{i=1}^n r_i\text{sim}(\mathfrak{s}_i)\subset\widehat{\mathcal{M}}(X,\omega).\]
By definition of $\mathfrak{G}_P$ and its vertex set $\mathfrak{V}_P$, we have that the set of marked Voronoi staples, $\widehat{\mathcal{S}}(X,\omega)$---from which $(X,\omega)$ may be recovered---is contained in this union of scaled simulations.  Again from Proposition \ref{sim_graph_for_P(X,omega)_consistent}, if $(Y,\eta)\in\mathcal{H}_1(d_1,\dots,d_\kappa)$ also satisfies $\mathcal{P}(Y,\eta)=P$, then its corresponding consistent scalars are of the form $(a r_i)_{i=1}^n$ for some $a\in\mathbb{R}_+^n$.  From \S\ref{Marked segments and Voronoi staples}, we have
\[\bigsqcup_{j=1}^\kappa \Omega_j(\widehat{\mathcal{S}}_F(X,\omega))=\bigsqcup_{j=1}^\kappa \Omega_j\left(\bigcup_{i=1}^n r_i\text{sim}_F(\mathfrak{s}_i)\right)\]
and
\[\bigsqcup_{j=1}^\kappa \Omega_j(\widehat{\mathcal{S}}_F(Y,\eta))=\bigsqcup_{j=1}^\kappa \Omega_j\left(\bigcup_{i=1}^n a r_i\text{sim}_F(\mathfrak{s}_i)\right)=a \left(\bigsqcup_{j=1}^\kappa \Omega_j\left(\bigcup_{i=1}^n r_i\text{sim}_F(\mathfrak{s}_i)\right)\right).\]
Since $(X,\omega)$ and $(Y,\eta)$ are isometric to the quotient spaces of these respective sets under an equivalence relation identifying edges, and since both translation surfaces have area one, we conclude that $a=1$ and thus $(X,\omega)=(Y,\eta)$. 
\end{proof}

\section{The algorithm}\label{The algorithm}

The constructive results of the previous sections suggest an algorithm (Algorithm \ref{the_algorithm}) which returns all translation surfaces with a given lattice Veech group $\Gamma$ in any given stratum $\mathcal{H}_1(d_1,\dots,d_\kappa)$.  We highlight the main steps of this algorithm here, providing necessary references and further commentary in italics.  Recall that $G$ is a given finite set of generators of the lattice $\Gamma\le\mathrm{SL}_2\mathbb{R}$.

\begin{enumerate}
\item[(1)]  Compute a finite set $\Theta_{\Gamma_m}\subset S^1$ containing $\Theta_\Gamma$, and let 
\[\mathfrak{S}_G:=\{(H,\{p,p^-\})\in 2^{G_\mathcal{O}}\backslash\{\varnothing\}\times \mathbb{P}(\mathcal{O})\ |\ {\bm\pi}(p),{\bm\pi}(p^-)\in\Theta_{\Gamma_m}\}.\]
(\textit{See Definition \mbox{\ref{simulations_defn}} and the remark following Lemma \mbox{\ref{Sims_G_finite}}.})

\item[(2)]  For each $n\in\mathbb{N}$:
\begin{enumerate}
\item[(a)]  Compute the set $\text{Sims}^n_G=\{\text{sim}^n(\mathfrak{s})\ |\ \mathfrak{s}\in\mathfrak{S}_G\}$ of all stage $n$ distinctive simulations determined by $G$. \\
(\textit{See Defintion \mbox{\ref{simulations_defn}}.})

\item[(b)] Compute the set $\mathcal{P}(\text{Sims}^n_G)$ of all permissible triples arising from $\text{Sims}^n_G$ along with their respective rays of permissible scalars. \\
(\textit{See Definition \mbox{\ref{permissible_triples_defn}} and Proposition \mbox{\ref{scalars_unique}}.  Recall from Definition \mbox{\ref{set_of_permissible_triples}} and Proposition \mbox{\ref{vor_staples_det_perm_triples_prop}} that there is some $N\in\mathbb{N}$ so that for any $(Y,\eta)\in\mathcal{H}_1(d_1,\dots,d_\kappa)$ with $\mathrm{SL}(Y,\eta)=\Gamma$, the set $\mathcal{P}(Y,\eta)$ determined by the marked Voronoi staples $\widehat{\mathcal{S}}(Y,\eta)$ is contained in the finite set of permissible triples $\mathcal{P}(\text{Sims}^N_G)$.})

\item[(c)] For each $P\subset\mathcal{P}(\text{Sims}^n_G)\backslash\{\varnothing\}$:\\
(\textit{For $n=N$ and any $(Y,\eta)$ as above, this loop will set $P=\mathcal{P}(Y,\eta)$ for some $P$.}) 
\begin{enumerate}
\item[(i)] Construct the permissible triples graph $\mathfrak{G}_P=(\mathfrak{V}_P,\mathfrak{E}_P)$.  If $\mathfrak{G}_P$ is connected and consistent with consistent scalars $(r_{\mathfrak{s}})_{\text{sim}^n(\mathfrak{s})\in\mathfrak{V}^n_P}$, then set
\[S:=\bigcup_{\text{sim}^n(\mathfrak{s})\in\mathfrak{V}_P}r_{\mathfrak{s}}\text{sim}^n(\mathfrak{s}),\]
construct the convex bodies $\Omega_i(S_F)$ and determine the corresponding essential points $\mathcal{E}_i(S_F)$ for each $i\in\{1,\dots,\kappa\}$.\\
(\textit{See Definition \mbox{\ref{simulation_graph}} and \S\mbox{\ref{Marked segments and Voronoi staples}}.})

\item[(ii)] If each $\Omega_i(S_F)$ is compact, the essential points $\sqcup_{i=1}^\kappa\mathcal{E}_i(S_F)$ come in pairs $\{p,p^-\}$ determined by the simulations $\text{sim}^n(\mathfrak{s})\in\mathfrak{V}_P$, and the edges of the convex bodies determined by $p$ and $p^-$ are equal length, then construct the translation surface $(X,\omega)$ by identifying these edges via translation.\\
(\textit{See \S\mbox{\ref{Marked segments and Voronoi staples}}.  When $P=\mathcal{P}(Y,\eta)$, the proof of Proposition \mbox{\ref{sim_graph_for_P(X,omega)_consistent}} shows that for some $a\in\mathbb{R}_+$, the set $aS$ contains the set of marked Voronoi staples $\widehat{\mathcal{S}}(Y,\eta)$, and thus the union of scaled essential points $\sqcup_{i=1}^\kappa a\mathcal{E}_i(S_F)$ is precisely $\widehat{\mathcal{S}}_F(Y,\eta)$.  Hence $(X,\omega)$ is a scaled version of $(Y,\eta)$.})

\item[(iii)] Rescale $(X,\omega)$ if necessary so that it has area one, and verify that $\mathrm{SL}(X,\omega)=\Gamma$.\\
(\textit{Computing $\text{SL}(X,\omega)$ is done using one of the algorithms mentioned in \S\mbox{\ref{Introduction}}.})
\end{enumerate}
\end{enumerate}
\end{enumerate}

\begin{eg}\label{simulations_example3}
Examples \ref{simulations_example} and \ref{simulations_example2} illustrate (parts of) Steps 1, 2.a, 2.b and 2.c.i of the algorithm.  Figure \ref{simulations_figure3} illustrates the construction of convex bodies and corresponding essential points in Step 2.c.i as well as the resulting translation surface $(X,\omega)$ constructed in Step 2.c.ii.  Notice that the essential points come only from $\text{sim}(\mathfrak{s}_0)$ and $\text{sim}(\mathfrak{s}_1)$; however, $\text{sim}(\mathfrak{s}_2)$ was necessary to determine the permissible scalars $(1,1,\sqrt{2})$ (Example \ref{simulations_example2}).  Rescaling to unit-area, one verifies that indeed $\mathrm{SL}(X,\omega)=\langle S,T^2\rangle$.
\end{eg}

\begin{figure}[t]
\centering
\begin{minipage}{.24\textwidth}
\begin{tikzpicture}
\filldraw[dashed,fill=black!30] (1/2,1/2) -- (-1/2,1/2) -- (-1/2,-1/2) -- (1/2,-1/2) -- cycle;
\draw [->,dashed,line width=1] (0,0) -- (1.5,0);
\draw [<-,line width=1] (-1.5,0) -- (0,0);
\draw [<->,line width=1] (0,-1.5) -- (0,1.5);
\draw (1.4,1.4) circle (0pt) node {$c_0^1$};
\draw (1,-.1) -- (1,.1);
\draw (-1,-.1) -- (-1,.1);
\draw (-.1,1) -- (.1,1);
\draw (-.1,-1) -- (.1,-1);
\filldraw (0,0) circle (1.5pt);
\draw (0,0) circle (.1);
\filldraw[fill=cyan] (1,0) circle (1.5pt) node[above] {\tiny $2$};
\filldraw[fill=cyan] (0,-1) circle (1.5pt) node[right] {\tiny $0$};
\filldraw[fill=cyan] (0,1) circle (1.5pt) node[right] {\tiny $0$};
\filldraw[fill=red] (-1,0) circle (1.5pt) node[above] {\tiny $2$};
\filldraw[fill=yellow] (1,1) circle (1.5pt) node[right] {\tiny $1$};
\filldraw[fill=yellow] ({-1},{1}) circle (1.5pt) node[right] {\tiny $1$};
\filldraw[fill=yellow] ({-1},{-1}) circle (1.5pt) node[right] {\tiny $2$};
\filldraw[fill=yellow] ({1},{-1}) circle (1.5pt) node[right] {\tiny $2$};
\end{tikzpicture}
\end{minipage}
\begin{minipage}{.24\textwidth}
\begin{tikzpicture}
\filldraw[dashed,fill=black!30] (1/2,1/2) -- (-1/2,1/2) -- (-1/2,-1/2) -- (1/2,-1/2) -- cycle;
\draw [->,dashed,line width=1] (0,0) -- (1.5,0);
\draw [<-,line width=1] (-1.5,0) -- (0,0);
\draw [<->,line width=1] (0,-1.5) -- (0,1.5);
\draw (1.4,1.4) circle (0pt) node {$c_1^1$};
\draw (1,-.1) -- (1,.1);
\draw (-1,-.1) -- (-1,.1);
\draw (-.1,1) -- (.1,1);
\draw (-.1,-1) -- (.1,-1);
\filldraw (0,0) circle (1.5pt);
\draw (0,0) circle (.1);
\draw (0,0) circle (.15);
\filldraw[fill=cyan] (0,-1) circle (1.5pt) node[right] {\tiny $2$};
\filldraw[fill=cyan] (1,0) circle (1.5pt) node[above] {\tiny $1$};
\filldraw[fill=cyan] (-1,0) circle (1.5pt) node[above] {\tiny $1$};
\filldraw[fill=red] (0,1) circle (1.5pt) node[right] {\tiny $2$};
\filldraw[fill=yellow] ({1},{1}) circle (1.5pt) node[right] {\tiny $2$};
\filldraw[fill=yellow] ({-1},{1}) circle (1.5pt) node[right] {\tiny $2$};
\filldraw[fill=yellow] ({-1},{-1}) circle (1.5pt) node[right] {\tiny $0$};
\filldraw[fill=yellow] ({1},{-1}) circle (1.5pt) node[right] {\tiny $0$};
\end{tikzpicture}
\end{minipage}
\begin{minipage}{.24\textwidth}
\begin{tikzpicture}
\filldraw[dashed,fill=black!30] (1/2,1/2) -- (-1/2,1/2) -- (-1/2,-1/2) -- (1/2,-1/2) -- cycle;
\draw [->,dashed,line width=1] (0,0) -- (1.5,0);
\draw [<-,line width=1] (-1.5,0) -- (0,0);
\draw [<->,line width=1] (0,-1.5) -- (0,1.5);
\draw (1.4,1.4) circle (0pt) node {$c_2^1$};
\draw (1,-.1) -- (1,.1);
\draw (-1,-.1) -- (-1,.1);
\draw (-.1,1) -- (.1,1);
\draw (-.1,-1) -- (.1,-1);
\filldraw (0,0) circle (1.5pt);
\draw (0,0) circle (.1);
\draw (0,0) circle (.15);
\draw (0,0) circle (.2);
\filldraw[fill=cyan] (-1,0) circle (1.5pt) node[above] {\tiny $0$};
\filldraw[fill=cyan] (0,1) circle (1.5pt) node[right] {\tiny $1$};
\filldraw[fill=red] (1,0) circle (1.5pt) node[above] {\tiny $0$};
\filldraw[fill=red] (0,-1) circle (1.5pt) node[right] {\tiny $1$};
\filldraw[fill=yellow] ({1},{1}) circle (1.5pt) node[right] {\tiny $0$};
\filldraw[fill=yellow] ({-1},{1}) circle (1.5pt) node[right] {\tiny $0$};
\filldraw[fill=yellow] ({-1},{-1}) circle (1.5pt) node[right] {\tiny $1$};
\filldraw[fill=yellow] ({1},{-1}) circle (1.5pt) node[right] {\tiny $1$};
\end{tikzpicture}
\end{minipage}
\begin{minipage}{.24\textwidth}
\begin{tikzpicture}
\filldraw[fill=black!30] (-1,-1) -- (1,-1) -- (1,0) -- (0,0) -- (0,1) -- (-1,1) -- cycle;
\filldraw (-1,-1) circle (1.5pt);
\filldraw (0,-1) circle (1.5pt);
\filldraw (1,-1) circle (1.5pt);
\filldraw (1,0) circle (1.5pt);
\filldraw (0,0) circle (1.5pt);
\filldraw (0,1) circle (1.5pt);
\filldraw (-1,1) circle (1.5pt);
\filldraw (-1,0) circle (1.5pt);
\draw[dashed] (-1/2,-1) -- (-1/2,1);
\draw[dashed] (1/2,-1) -- (1/2,0);
\draw[dashed] (-1,-1/2) -- (1,-1/2);
\draw[dashed] (-1,1/2) -- (0,1/2);
\draw (-.9,-1) arc (0:90:.1);
\draw (1,-.9) arc (90:180:.1);
\draw (.9,0) arc (180:270:.1);
\draw (-1,-.1) arc (270:450:.1);
\draw (-.85,0) arc (0:90:.15);
\draw (0,.1) arc (90:360:.1);
\draw (0,.15) arc (90:360:.15);
\draw (.1,-1) arc (0:180:.1);
\draw (.15,-1) arc (0:180:.15);
\draw (.2,-1) arc (0:180:.2);
\draw (-.1,1) arc (180:270:.1);
\draw (-.15,1) arc (180:270:.15);
\draw (-.2,1) arc (180:270:.2);
\draw (-1,.9) arc (270:360:.1);
\draw (-1,.85) arc (270:360:.15);
\draw (-1,.8) arc (270:360:.2);
\end{tikzpicture}
\end{minipage}
\caption{The convex body and resulting translation surface $(X,\omega)$ (with opposite sides identified) using the scaled simulations $\text{sim}(\mathfrak{s}_0),\ \text{sim}(\mathfrak{s}_1)$ and $\sqrt{2}\text{sim}(\mathfrak{s}_2)$ from Examples \ref{simulations_example} and \ref{simulations_example2}.  The arcs about the singularity of $\mathcal{O}$ and of $(X,\omega)$ indicate arguments of generalized polar coordinates.}
\label{simulations_figure3}
\end{figure}
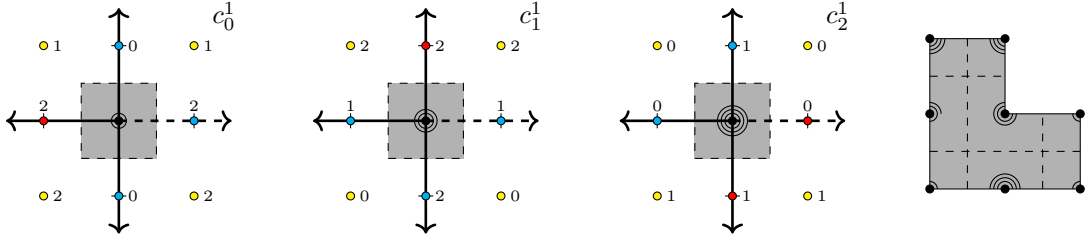

A number of improvements to Algorithm \mbox{\ref{the_algorithm}} can be made for computational efficiency.  We mention three in particular:
\begin{itemize}
\item  It is optimal to choose a minimal set of generators $G$ for $\Gamma$, as the size of $\mathfrak{S}_G$---and hence of $\text{Sims}_G$---depends on $|G|$.

\item  Recall from Proposition \mbox{\ref{marked_segs_distinctive_and_lpf}} that if $\hat{s},\hat{t}\in\widehat{\mathcal{M}}_F(X,\omega)$ are two marked segments with identical arguments in the same component $\mathcal{O}_i\subset \mathcal{O}$, then in fact $\{\hat{s},\hat{s}'\}=\{\hat{t},\hat{t}'\}$.  This requirement restricts the permissible triples graphs that one needs to consider in step 2.c.

\item In Step 2.b, one need only consider permissible triples $(p,q,u)\in\text{sim}_F^n(\mathfrak{r})\times \text{sim}_F^n(\mathfrak{s})\times \text{sim}_F^n(\mathfrak{t})$ where $\mathfrak{r},\ \mathfrak{s}$ and $\mathfrak{t}$ are defined using the same subset $H\subset G_\mathcal{O}$.  A similar restriction applies to the subsets $P\subset\mathcal{P}(\text{Sims}^n_G)$ considered in Step 2.c.
\end{itemize}

While Algorithm \mbox{\ref{the_algorithm}} does produce the finite set of lattice surfaces asserted by Theorem \mbox{\ref{finiteness_thm}} in finite time, it does not give a halting criterion to determine when the set of returned translation surfaces is exhaustive.  An explicit test to determine the $N\in\mathbb{N}$ for which $\mathcal{P}(\text{Sims}_G)\subset\mathcal{P}(\text{Sims}^N_G)$ would give such a halting criterion (see Proposition \mbox{\ref{vor_staples_det_perm_triples_prop}}).  In particular, Algorithm \mbox{\ref{the_algorithm}} together with this halting criterion would give a general procedure to determine whether or not any given lattice group is realized as a Veech group in any given stratum.  However, even without this criterion, the ideas of this paper may still be used in special cases to show that certain lattice groups are not realized in certain strata; see \S\mbox{\ref{The modular group in minimal strata}}.

As the set of all strata is countable, Algorithm \mbox{\ref{the_algorithm}}---if allowed to run indefinitely---is easily adapted to produce all unit-area translation surfaces with a given lattice Veech group in \textit{any} stratum.  A full implementation could also provide experimental evidence for lattices which are never realized as Veech groups, though---even with the aforementioned stratum-wise halting criterion---a criterion to definitively assert that a lattice is not a Veech group in any stratum seems to the author to be elusive.

We also mention that as the set of generators for any fanning group is at most countable, Algorithm \mbox{\ref{the_algorithm}} may be adapted to construct surfaces $(X,\omega)$ with given fanning Veech groups in given strata.  However, for countably infinite generating sets $G$, the set $\mathfrak{S}_G$ is uncountable, so a construction of such surfaces may not be exhaustive.

\section{The modular group in minimal strata}\label{The modular group in minimal strata}

Here we give an example showing how the ideas of the previous sections may be used---in certain cases---to give obstructions for lattices being realized as Veech groups in strata.\footnote{Due to time constraints, we have only considered the group $\mathrm{SL}_2\mathbb{Z}$ in minimal strata.  However, the author suspects that similar methods can be applied more generally to obtain further restrictions for different groups and strata.}  In particular, we show that the square torus is the only translation surface with Veech group $\mathrm{SL}_2\mathbb{Z}$ in the collection of all minimal strata $\mathcal{H}(2g-2),\ g>0$.  Throughout this section, fix $g>0$ and $(X,\omega)\in\mathcal{H}(2g-2)$.  Let $\sigma:=\sigma_1$ be the sole singularity of $(X,\omega)$, $\Omega:=\Omega_1$ the convex body subordinate to $\widehat{\mathcal{M}}_F(X,\omega)$ and $\mathcal{O}=\mathcal{O}(2g-2)$ the canonical surface associated to $\mathcal{H}(2g-2)$.  We begin with two lemmas.  

\begin{lem}\label{num_vor_staples}
Let $\widehat{\mathcal{S}}(X,\omega)=\{\{\hat{s}_i,\hat{s}_i'\}\}_{i=1}^n$ be the marked Voronoi staples of $(X,\omega)$.  If the counterclockwise angle measured from $\hat{s}_i$ to $\hat{s}_i'$ equals that measured from $\hat{s}_i'$ to $\hat{s}_i$ for each $i$, then $n=2g$ for $n$ even and $n=2g+1$ for $n$ odd.
\end{lem}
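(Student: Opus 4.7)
The plan is to apply Euler's formula to the polygonal presentation of $(X,\omega)$ coming from the convex body $\Omega := \Omega_1(\widehat{\mathcal{S}}_F(X,\omega)) \subset \mathcal{O}_1$, after a careful analysis of how the vertices of $\Omega$ are identified under the edge pairing induced by the marked Voronoi staples. By the remarks preceding Proposition~\ref{adjacent_vs_in_same_pi_sector}, $\Omega$ is a bounded topological disk whose boundary is a $2n$-gon with edges in bijection with the essential points $\widehat{\mathcal{S}}_F(X,\omega) = \{\hat{s}_i, \hat{s}_i'\}_{i=1}^n$, and $(X,\omega)$ is recovered from $\Omega$ by identifying, via translation, the two edges associated to each pair $\{\hat{s}_i, \hat{s}_i'\}$.

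First I would cyclically order the $2n$ essential points by argument in $\mathcal{O}_1$ as $\hat{t}_1, \dots, \hat{t}_{2n}$ and label the edges and vertices of $\partial\Omega$ in the induced CCW order as $e_1, v_1, e_2, v_2, \dots, e_{2n}, v_{2n}$, so that $e_i$ is the edge contributed by $\hat{t}_i$ and $v_i$ is the circumcenter $c(\hat{t}_i, \hat{t}_{i+1})$ between $e_i$ and $e_{i+1}$ (indices mod $2n$). The pairing map $\pi : \hat{t} \mapsto \hat{t}'$ is a fixed-point-free involution of this cyclic set, and the hypothesis implies $\arg(\hat{t}_i') - \arg(\hat{t}_i) \equiv \pi(2g-1) \pmod{2\pi(2g-1)}$ for every $i$. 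Thus $\pi$ is a uniform angular shift, hence preserves the cyclic order; being a fixed-point-free orientation-preserving involution of a cyclic set of $2n$ elements, it must be rotation by $n$, giving $\pi(\hat{t}_i) = \hat{t}_{i+n}$ for all $i$.

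Next I would determine the vertex identifications. The translation gluing that sends $e_i$ (oriented CCW from $v_{i-1}$ to $v_i$) to $e_{i+n}$ (oriented CCW from $v_{i+n-1}$ to $v_{i+n}$) reverses the boundary orientation, as in the familiar square-torus identification, so $v_{i-1} \sim v_{i+n}$ and $v_i \sim v_{i+n-1}$ for every $i$. Combining the two relations coming from consecutive indices gives $v_j \sim v_{j+2}$ for all $j$ modulo $2n$, which splits the vertices into at most two parity classes. A parity check on the remaining relation $v_{i-1} \sim v_{i+n}$ shows that these two classes are merged precisely when $n$ is even. Hence the number of vertex equivalence classes is $V' = 1$ for $n$ even and $V' = 2$ for $n$ odd.

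Finally I would apply Euler's formula. The closed connected orientable surface $(X,\omega)$, realized as $\Omega$ with its $n$ pairs of edges identified, has CW data $F = 1$, $E = n$, $V = V'$, so
\[
V' - n + 1 \;=\; \chi(X) \;=\; 2 - 2g.
\]
Substituting $V' = 1$ yields $n = 2g$ in the even case, and $V' = 2$ yields $n = 2g + 1$ in the odd case, as claimed. The main obstacle is pinning down the precise vertex matching of the translation gluing, i.e.\ verifying $v_{i-1} \sim v_{i+n}$ rather than $v_{i-1} \sim v_{i+n-1}$; this, however, is the same orientation convention already underlying the edge identifications described in \S\ref{Marked segments and Voronoi staples} and in \S3 of \cite{ESS}.
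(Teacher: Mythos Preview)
Your proof is correct and follows essentially the same route as the paper's: both hinge on showing that the pairing $\hat t_i\mapsto\hat t_{i+n}$ collapses the $2n$ boundary vertices of $\Omega$ to a single class when $n$ is even and to two classes when $n$ is odd (you argue this explicitly via $v_j\sim v_{j+2}$, the paper by analogy with opposite-edge identifications of a regular $2n$-gon). The only cosmetic difference is the final bookkeeping---you apply Euler's formula $V'-n+1=2-2g$, while the paper sums the interior angles of $\Omega$ to $2(n-2g+1)\pi$ and equates this to $2\pi V'$; these are equivalent computations.
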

\begin{proof}
Since $\widehat{\mathcal{S}}_F(X,\omega)$ is the set of all essential points of $\Omega$, the convex body $\Omega$ has $2n$ edges.  Decompose $\Omega$ into $2n$ closed triangular regions with disjoint interiors, where each triangle has one vertex at the origin $0\in\mathcal{O}$, and remaining two vertices at the endpoints of an edge of $\Omega$.  The sum of the interior angles of all triangles in this decomposition is $2n\pi$.  The origin $0\in\mathcal{O}$ is a singularity of cone angle $2(2g-1)\pi$, so the sum of the interior angles of $\Omega$ is 
\[2n\pi-2(2g-1)\pi=2(n-2g+1)\pi.\]

Recall that $(X,\omega)$ is isometric to the quotient space of $\Omega$ under the equivalence relation given by identifying edges determined by the marked Voronoi staples $\widehat{\mathcal{S}}(X,\omega)$, and the vertices of $\Omega$ under this identification correspond to Voronoi 0-cells of $(X,\omega)$ (\S\ref{Marked segments and Voronoi staples}).  In particular, the sum of interior angles of $\Omega$ which are identified as a single point must equal $2\pi$.  Our assumptions on the counterclockwise angles between $\hat{s}_i$ and $\hat{s}_i'$ and between $\hat{s}_i'$ and $\hat{s}_i$ imply that for $n$ even, the vertices of $\Omega$ become a single point under the identification, while for $n$ odd, the vertices of $\Omega$ become two distinct points under the identification (the argument here is analogous to that which shows the translation surface constructed by identifying opposite edges of a regular $2m$-gon has one singularity when $m$ is even and two singularities when $m$ is odd).  In particular, for $n$ even,
\[2(n-2g+1)\pi=2\pi\]
implies $n=2g$, while for $n$ odd, 
\[2(n-2g+1)\pi=4\pi\]
implies $n=2g+1$.
\end{proof}

\begin{lem}\label{P=S_F(X,omega)}
Suppose that the minimal length of a saddle connection on $(X,\omega)$ is one.  If the set
\[P:=\{p\in\mathcal{O}\ |\ |p|=1,\ \text{arg}({\bm\pi}(p))\in\{0,\pi/2,\pi,3\pi/2\}\}\]
belongs to the set of marked segments $\widehat{\mathcal{M}}_F(X,\omega)$, then in fact $P=\widehat{\mathcal{S}}_F(X,\omega)$.
\end{lem}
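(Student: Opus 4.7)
The strategy is to show that the convex body $\Omega=\Omega_1\subset\mathcal{O}_1$ decomposes, in each $2\pi$-sector $c_j$ of $\mathcal{O}_1$, as the closed unit square $[-1/2,1/2]^2$ centered at the origin; from this it will follow that the essential points $\mathcal{E}_1$ are precisely the elements of $P$, and hence $\widehat{\mathcal{S}}_F(X,\omega)=\mathcal{E}_1=P$.

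First I would establish $\Omega\cap c_j\subset [-1/2,1/2]^2$. Since $P\subset\widehat{\mathcal{M}}_F(X,\omega)$, one has $\Omega\subset H(p)$ for every $p\in P$. For $p\in P\cap c_j$, the boundary of $H(p)\cap c_j$ is the perpendicular bisector of the segment from $0$ to $p$, at distance $1/2$ from $0$; intersecting the four half-spaces arising from the four cardinal directions of $P\cap c_j$ yields exactly the closed unit square.

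Next I would prove the reverse inclusion: for every $q\in[-1/2,1/2]^2\cap c_j$ and every marked segment $\hat u\in\widehat{\mathcal{M}}_F(X,\omega)$, we have $|q|\leq d(\hat u,q)$, i.e., $q\in H(\hat u)$. When $\hat u$ lies in a different $2\pi$-sector from $q$, the branched-plane structure of $\mathcal{O}_1$ forces any path from $\hat u$ to $q$ either to pass through the singularity at $0$ (yielding length $|\hat u|+|q|\geq 1+|q|>|q|$) or to be a straight-line path in the flat $w$-coordinates whose length one verifies exceeds $|q|$, using $|\hat u|\geq 1$ and $|q|\leq\sqrt{2}/2$ together with the constraint on admissible slit crossings. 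When $\hat u$ lies in the same sector as $q$, routine Euclidean bisector computations combined with $|\hat u|\geq 1$ dispense with the cases where $\hat u$ is cardinal (so $\hat u\in P$ and the bisector coincides with a side of the square) or where $|\hat u|\geq\sqrt 2$ (so the bisector lies outside the unit square). The delicate remaining case is a putative non-cardinal $\hat u\in c_j$ with $|\hat u|\in[1,\sqrt 2)$, whose bisector would otherwise cut a corner off the unit square; this is the main obstacle of the proof, and I expect to rule it out by combining $\hat u$ with appropriate cardinal elements of $P\cap c_j$ so as to exhibit, via concatenation in the flat structure, a saddle connection of length strictly less than $1$, contradicting the minimality hypothesis.

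Finally, from $\Omega\cap c_j=[-1/2,1/2]^2$ for each $j$, each of the four sides of the unit square in $c_j$ coincides with $\partial H(p)$ for the unique $p\in P\cap c_j$ perpendicular to that side, so each $p\in P$ is essential, giving $P\subset\mathcal{E}_1$. Conversely, no marked segment $\hat u\notin P$ can have $\partial H(\hat u)$ coinciding with any side of a unit square, since such coincidence would force both $|\hat u|=1$ and $\arg(\overline{\hat u})$ cardinal; therefore $\mathcal{E}_1\subset P$, and we conclude $\widehat{\mathcal{S}}_F(X,\omega)=P$ as required.
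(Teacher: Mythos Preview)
Your approach is correct and matches the paper's: both identify $\Omega$ with the unit square in each sector and handle the delicate case of a non-cardinal $\hat u$ with $|\hat u|\in[1,\sqrt 2)$ by producing (via your ``concatenation,'' which the paper writes as $[p,\hat u]$ for the nearest $p\in P$) a saddle connection shorter than $1$. The paper organizes this slightly more cleanly---proving $B_1(p)\cap\widehat{\mathcal M}_F(X,\omega)=\{p\}$ and $B_{\sqrt 2}(0)\subset\bigcup_{p\in P}B_1(p)$ to get $|\hat s|\ge\sqrt 2$ for all $\hat s\notin P$, then invoking only the triangle inequality---which sidesteps your sector case split entirely.
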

\begin{proof}
It suffices to show that $P$ is the set of essential points of the convex body $\Omega$, i.e. that (i) $\Omega=\cap_{p\in P}H(p)$ and (ii) for any $q\in P$, $\Omega\subsetneq\cap_{p\in P\backslash\{q\}} H(p)$.  Condition (ii) is immediate from the fact that $\Omega$ is compact while $\cap_{p\in P\backslash\{q\}} H(p)$ is non-compact for any $q\in P$.  Now
\[\Omega=\bigcap_{\hat{s}\in\widehat{\mathcal{M}}_F(X,\omega)}H(\hat{s})=\left(\bigcap_{p\in P}H(p)\right)\cap\left(\bigcap_{\hat{s}\in\widehat{\mathcal{M}}_F(X,\omega)\backslash P}H(\hat{s})\right),\]
so it suffices to show for each marked segment $\hat{s}\in\widehat{\mathcal{M}}_F(X,\omega)\backslash P$, that $\cap_{p\in P}H(p)\subset H(\hat{s})$.  

We claim that for each $p\in P$, the ball centered at $p$ of radius one contains no marked segments other than $p$, i.e.
\[B_1(p)\cap\widehat{\mathcal{M}}_F(X,\omega)\backslash\{p\}=\varnothing.\]
Suppose on the contrary that the intersection is non-empty.  As in the proof of Proposition \ref{vor_staples_det_perm_triples_prop}, we can choose some $\hat{u}$ in the intersection with argument nearest that of $p$ and find that $[p,\hat{u}]$ is a marked segment of length strictly less than one.  Its corresponding saddle connection also has length strictly less than one, contrary to our assumptions, so the claim holds.

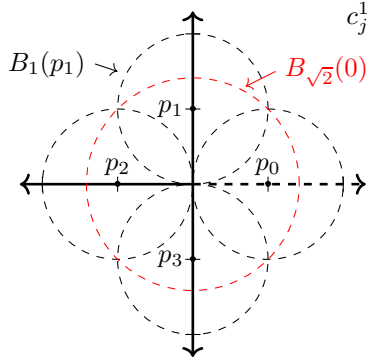
\begin{figure}[t]
\centering
\begin{tikzpicture}[scale=1]
\draw [->,dashed,line width=1] (0,0) -- (2.3,0); 
\draw [<-,line width=1] (-2.3,0) -- (0,0);
\draw [<->,line width=1] (0,-2.3) -- (0,2.3); 
\draw [dashed] (1,0) circle (1);
\filldraw [dashed] (1,0) circle (1pt) node[above] {$p_0$};
\draw [dashed] (0,1) circle (1);
\filldraw [dashed] (0,1) circle (1pt) node[left] {$p_1$};
\draw [->] (-1.3,1.6) node[left] {$B_{1}(p_1)$} -- (-1,1.5);
\draw [dashed] (-1,0) circle (1);
\filldraw [dashed] (-1,0) circle (1pt) node[above] {$p_2$};
\draw [dashed] (0,-1) circle (1);
\filldraw [dashed] (0,-1) circle (1pt) node[left] {$p_3$};
\draw [dashed,color=red] (0,0) circle ({sqrt(2)});
\draw [->,color=red] (1.1,1.5) node[right,color=red] {$B_{\sqrt 2}(0)$} -- (.7,1.3);
\draw (1,-.1) -- (1,.1);
\draw (2,-.1) -- (2,.1);
\draw (-1,-.1) -- (-1,.1);
\draw (-2,-.1) -- (-2,.1);
\draw (-.1,1) -- (.1,1);
\draw (-.1,2) -- (.1,2);
\draw (-.1,-1) -- (.1,-1);
\draw (-.1,-2) -- (.1,-2);
\draw (2.2,2.2) circle (0pt) node {$c_j^1$};
\end{tikzpicture}
\caption{The ball $B_{\sqrt 2}(0)$ is contained in the union of balls $B_1(p),\ p\in P$.  Note that this figure shows only a portion of the ball $B_{\sqrt 2}(0)$, as this ball is centered at the singularity $0\in\mathcal{O}$.  Similarly, only the upper half of $B_1(p_0)$ is shown.}
\label{ball_contained_in_balls}
\end{figure}

Next, we claim that the ball centered at the origin of radius $\sqrt 2$ is contained in the union of balls of radius one centered at the various $p\in P$,
\[B_{\sqrt 2}(0)\subset\bigcup_{p\in P} B_1(p).\]
This is clear from elementary Euclidean geometry, restricting to each $2\pi$-sector $c_j^1$; see Figure \ref{ball_contained_in_balls}.  In particular, the previous two claims imply that 
\begin{equation}\label{ball_sqrt_2}
B_{\sqrt{2}}(0)\cap \left(\widehat{\mathcal{M}}_F(X,\omega)\backslash P\right)=\varnothing.
\end{equation}

Notice that the maximum distance from the origin to any point in $\cap_{p\in P}H(p)$ is $\sqrt{2}/2$ (this distance being realized at the vertices of this intersection).  Let $\hat{s}\in\widehat{\mathcal{M}}_F(X,\omega)\backslash P$ and let $q$ belong to the complement of $H(\hat{s})$.  Certainly $|q|>|\hat{s}|/2$.  Equation \ref{ball_sqrt_2} implies $|\hat{s}|\ge\sqrt2$, so $|q|>\sqrt{2}/2$.  Hence $q\notin\cap_{p\in P}H(p)$, so the complement of $H(\hat{s})$ does not intersect $\cap_{p\in P}H(p)$.  Thus $\cap_{p\in P}H(p)\subset H(\hat{s})$ as desired.  
\end{proof}

With these two lemmas, we now prove the main result of this section.

\begin{proof}[Proof of Theorem \ref{square_torus_thm}]
Let $S=(\begin{smallmatrix}0 & -1\\ 1 & 0\end{smallmatrix})$ and $T=(\begin{smallmatrix}1 & 1\\ 0 & 1\end{smallmatrix})$, fix $\tau_S,\tau_T\in\text{Trans}(\mathcal{O})$, and set $g_S=\tau_S\circ f_S$ and $g_T=\tau_T\circ f_T$ where $f_S,f_T\in\text{Aff}_{\text{C}}^+(\mathcal{O})$.  We first show that all elements of
\[P:=\{p\in\mathcal{O}\ |\ |p|=1,\ \text{arg}({\bm\pi}(p))\in\{0,\pi/2,\pi,3\pi/2\}\}\]
belong to the same $\langle g_S,g_T\rangle$-orbit.  Note that $\text{Trans}(\mathcal{O})=\langle\rho_1\rangle$; let $n\in\mathbb{Z}_{2g-1}$ be such that $\tau_T=\rho_1^n$.  Set
\[P':=\{p\in\mathcal{O}\ |\ |p|=\sqrt 2,\ \text{arg}({\bm\pi}(p))\in\{\pi/4,3\pi/4,5\pi/4,7\pi/4\}\}.\]
Beginning with $p_0\in P\cap c_0^1,\ \text{arg}(p_0)=0$ and sweeping counterclockwise, label the elements of $P\cup P'$ as $p_0,p_1,\dots,p_{16g-9}$; see Figure \ref{P_and_P'_points}.  
\begin{figure}[b]
\centering
\begin{tikzpicture}
\draw[<-,line width=1] (-2.2,0) -- (0,0);
\draw[->,line width=1,dashed] (0,0) -- (2.2,0);
\draw[<->,line width=1] (0,-2.2) -- (0,2.2);
\filldraw (2,2) circle (0pt) node {$c_j^1$};
\filldraw (1,0) circle (1pt) node[above right] {$p_{8j}$};
\filldraw (1,1) circle (1pt) node[above right] {$p_{8j+1}$};
\filldraw (0,1) circle (1pt) node[above right] {$p_{8j+2}$};
\filldraw (-1,1) circle (1pt) node[above left] {$p_{8j+3}$};
\filldraw (-1,0) circle (1pt) node[above left] {$p_{8j+4}$};
\filldraw (-1,-1) circle (1pt) node[below left] {$p_{8j+5}$};
\filldraw (0,-1) circle (1pt) node[below right] {$p_{8j+6}$};
\filldraw (1,-1) circle (1pt) node[below right] {$p_{8j+7}$};
\end{tikzpicture}
\caption{Labelling of the points $p_0,\dots,p_{16g-9}$ of $P\cup P'$ for $j\in\mathbb{Z}_{2g-1}$.}
\label{P_and_P'_points}
\end{figure}
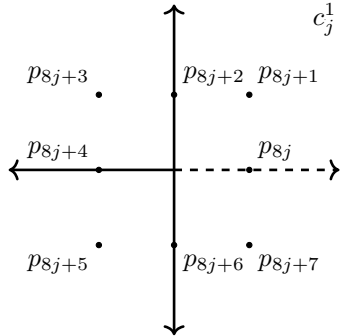
For each $k\in\mathbb{Z}_{16g-8}$, we have
\[f_S(p_k)=p_{k+2},\]
and for $k\not\equiv 1$ (mod $4$),
\[f_T(p_k)=\begin{cases}
p_k &\text{if}\ k\equiv 0\ (\text{mod 4})\\
p_{k-1} &\text{if}\ k\equiv 2,3\ (\text{mod 4})
\end{cases}.\]
Moreover, for each $k\in\mathbb{Z}_{16g-8}$,
\[\tau_T(p_k)=p_{k+8n}.\]
Set $f:=g_T^{-1}\circ g_S^{-1}\circ g_T^{-1}\circ g_S$ and $h:=g_S^{-1}\circ g_T^{-1}\circ g_S\circ g_T^{-1}$.  Using the relations above---together with Lemma \ref{centralizer_lemma} and the fact that $\text{Trans}(\mathcal{O})=\langle\rho_1\rangle$ is abelian---we find for each $k\in\mathbb{Z}_{16g-8}$ that
\begin{align*}
f(p_{4k})&=g_T^{-1}\circ g_S^{-1}\circ g_T^{-1}\circ g_S(p_{4k})\\
&=(f_T^{-1}\circ\tau_T^{-1})\circ (f_S^{-1}\circ\tau_S^{-1})\circ (f_T^{-1}\circ\tau_T^{-1})\circ (\tau_S\circ f_S)(p_{4k})\\
&=\tau_T^{-2}\circ f_T^{-1}\circ f_S^{-1}\circ f_T^{-1}\circ f_S(p_{4k})\\
&=\tau_T^{-2}\circ f_T^{-1}\circ f_S^{-1}\circ f_T^{-1}(p_{4k+2})\\
&=\tau_T^{-2}\circ f_T^{-1}\circ f_S^{-1}(p_{4k+3})\\
&=\tau_T^{-2}\circ f_T^{-1}(p_{4k+1})\\
&=\tau_T^{-2}(p_{4k+2})\\
&=p_{4k-16n+2}
\end{align*}
and
\begin{align*}
h(p_{4k+2})&=g_S^{-1}\circ g_T^{-1}\circ g_S\circ g_T^{-1}(p_{4k+2})\\
&=(f_S^{-1}\circ\tau_S^{-1})\circ (f_T^{-1}\circ\tau_T^{-1})\circ (\tau_S\circ f_S)\circ (f_T^{-1}\circ\tau_T^{-1})(p_{4k+2})\\
&=\tau_T^{-2}\circ f_S^{-1}\circ f_T^{-1}\circ f_S\circ f_T^{-1}(p_{4k+2})\\
&=\tau_T^{-2}\circ f_S^{-1}\circ f_T^{-1}\circ f_S(p_{4k+3})\\
&=\tau_T^{-2}\circ f_S^{-1}\circ f_T^{-1}(p_{4k+5})\\
&=\tau_T^{-2}\circ f_S^{-1}(p_{4k+6})\\
&=\tau_T^{-2}(p_{4k+4})\\
&=p_{4k-16n+4}.
\end{align*}
Then
\begin{align*}
g_T^8\circ h\circ f\circ h\circ f(p_0)&=g_T^8\circ h\circ f\circ h(p_{-16n+2})\\
&=g_T^8\circ h\circ f(p_{-32n+4})\\
&=g_T^8\circ h(p_{-48n+6})\\
&=g_T^8(p_{-64n+8})\\
&=\tau_T^8\circ f_T^8(p_{-64n+8})\\
&=\tau_T^8(p_{-64n+8})\\
&=p_8.
\end{align*}
Iterating this latter composition shows that each $p_{8k}$ belongs to the same $\langle g_S,g_T\rangle$-orbit.  Injectivity of $g_S$ implies that the union of the images under $g_S,\ g_S^2$ and $g_S^3$ of these $p_{8k}$ is all of $P$, as claimed.  

Now suppose $(X,\omega)\in\mathcal{H}(2g-2)$ with $\mathrm{SL}(X,\omega)=\mathrm{SL}_2\mathbb{Z}=\langle S,T\rangle$ for some $g>0$.  Normalize $(X,\omega)$ if necessary so that its shortest saddle connection has unit length.  One computes
\[S^1\backslash (T\cdot\Delta\cup T^{-1}\cdot\Delta\cup ST\cdot\Delta\cup ST^{-1}\cdot\Delta)=\{(\pm 1,0),(0,\pm 1)\}.\]
This set contains $\Theta_{\mathrm{SL}_2\mathbb{Z}}=\Theta_{\mathrm{SL}(X,\omega)}$, so Corollary \ref{directions_for_orbits} implies that the shortest saddle connection of $(X,\omega)$ is horizontal or vertical.  Since the orthogonal matrix $S$ belongs to $\mathrm{SL}(X,\omega)$ and any affine automorphism of $(X,\omega)$ sends saddle connections to saddle connections, $(X,\omega)$ has both horizontal and vertical saddle connections of minimal length one.

Recycling notation, let $f\in\text{Aff}^+(\mathcal{O})$ and $\{p,p^-\}\in\mathbb{P}(\mathcal{O})$.  Lemma \ref{pi*f=A*pi} together with the fact that $f$ is a self-homeomorphism of $\mathcal{O}=\mathcal{O}(2g-2)$ implies that the counterclockwise angle measured from $p$ to $p^-$ equals that measured from $f(p)$ to $f(p^-)$.  In particular, this is true of any $f\in\text{Aff}_\mathcal{O}^+(X,\omega)$ and horizontal, unit-length $\{\hat{s},\hat{s}'\}\in\widehat{\mathcal{M}}(X,\omega)$.  Since $S,T\in\mathrm{SL}(X,\omega)$, there exist $g_S,g_T\in\text{Aff}_\mathcal{O}^+(X,\omega)$ with $\text{der}(g_S)=S,\ \text{der}(g_T)=T$ (\S\ref{The group Aff^+_O(X,omega) and its action on marked segments}).  The argument above shows that the set $P$ belongs to the $\langle g_S,g_T\rangle$-orbit of $\hat{s}$.  Since this orbit belongs to $\widehat{\mathcal{M}}_F(X,\omega)$, Lemma \ref{P=S_F(X,omega)} implies that $P=\widehat{\mathcal{S}}_F(X,\omega)$.  Moreover, $\hat{s}'$ belongs to this orbit, so the observation above implies that the counterclockwise angle measured from $\hat{s}$ to $\hat{s}'$ equals that measured from $\hat{s}'$ to $\hat{s}$, and the same is true of any pair of marked Voronoi staples in $\widehat{\mathcal{S}}(X,\omega)$.  Since 
\[|\widehat{\mathcal{S}}(X,\omega)|=|\widehat{\mathcal{S}}_F(X,\omega)|/2=|P|/2=2(2g-1)\]
is even, Lemma \ref{num_vor_staples} implies that $2(2g-1)=2g$, which is only true for $g=1$.  When $g=1$, the convex body $\Omega$ is the unit-square centered at $0\in\mathcal{O}=(\mathbb{C},dz)$, and the reconstruction of $(X,\omega)$ from $\Omega$ gives the square torus.
\end{proof}

\bigskip

{\flushleft \textbf{Acknowledgments.}} The author thanks Tom Schmidt for many helpful discussions and suggestions, including---but not limited to---conjecturing the reverse implication of the first statement of Lemma \ref{lattice_and_fanning}.  Thanks also to Aaron Calderon and Sunrose Shrestha for pointing out the appearance of Theorem \ref{finiteness_thm} in \cite{Smillie-Weiss10_fin}; the independent proof given here was written prior to the author's knowledge of the original result of Smillie and Weiss.  The author also thanks the anonymous referee whose suggestions greatly improved the exposition of this paper.

{\flushleft \textbf{Funding.}}  The author was supported in part by the Department of Mathematics at Oregon State University and the Mathematical Institute at Utrecht University.  This work is part of project number 613.009.135 of the research programme Mathematics Clusters which is (partly) financed by the Dutch Research Council (NWO).

{\flushleft \textbf{Conflicts of interest.}}  The author has no relevant financial or non-financial interests to disclose.

{\flushleft \textbf{Data availability statement.}} Data sharing not applicable to this article as no datasets were generated or analysed during the current study.\\

This version of the article has been accepted for publication, after peer review but is not the Version of Record and does not reflect post-acceptance improvements, or any corrections. The Version of Record is available online at: \url{http://dx.doi.org/10.1007/s10711-023-00818-7}.


\end{document}